\newcommand\fake@math{}
\def\fake@math#1\){[math]}
\newtheorem{theorem}{Theorem}[section]
\newtheorem{lemma}[theorem]{Lemma}
\newtheorem{proposition}[theorem]{Proposition}
\newtheorem{corollary}[theorem]{Corollary}
\numberwithin{equation}{section}
\theoremstyle{remark}
\newtheorem{remark}[theorem]{Remark}
\theoremstyle{definition}
\newcommand{\fnorm}[1]{\left\|#1\right\|_{{\rm F}}}
\newcommand{\bracket}[1]{\left(#1\right)}
\newcommand{\abs}[1]{\left|#1\right|}
\newcommand{\diag}[1]{\mathrm{diag}\left(#1\right)}
\newcommand{\w}[1]{\widetilde{ #1}}
\newcommand{\norm}[1]{\lVert #1 \rVert}
\newcommand{\mb}[1]{{\color{black} #1}}
\newcommand{\mc}[1]{\mathcal{#1}}
\def\q {\quad}
\def \l{\langle}
\def \r{\rangle}
\def \sss {\scriptscriptstyle}
\def\mm{ \left[
 \begin{matrix}}
\def\nn{\end{matrix} \right] }
\def \lad {\lambda}
\def \Lad{\Lambda}
\def \d{\delta}
\def \De{\Delta}
\def \ep {\varepsilon}
\def \vp {\varphi}
\def \na {\nabla}
\def \S{\mathbb{S}}
\def \R{\mathbb{R}}
\def \C{\mathbb{C}}
\def \on{{\rm O}(n)}
\def \sn {{\rm S}(n)}
\def \dd {\cdot}
\def \t {\times}
\def \si{\sigma}
\DeclareMathOperator{\hess}{Hess}
\DeclareMathOperator{\grad}{grad}
\DeclareMathOperator{\dist}{dist}
\begin{document}

\title{Vector-wise Joint Diagonalization of Almost Commuting Matrices}

\begin{abstract}
This work aims to numerically construct exactly commuting matrices close to 
given almost commuting ones,
which is equivalent to the joint approximate diagonalization problem. We first prove that almost commuting matrices generically \mb{have} approximate common eigenvectors that are almost orthogonal to each other. Based on this key observation, we propose an efficient and robust vector-wise joint diagonalization (VJD) algorithm, which constructs the orthogonal similarity transform by sequentially finding these approximate common eigenvectors. In doing so, we consider sub-optimization problems over the unit sphere, for which we present a Riemannian approximate Newton method with a rigorous convergence analysis. We also discuss the numerical stability of the proposed VJD algorithm. Numerical experiments with applications in the independent component analysis are provided to reveal the relation with Huaxin Lin's theorem and to demonstrate that our method compares favorably with the state-of-the-art Jacobi-type joint diagonalization algorithm.
\end{abstract}

\begin{keyword}
Almost commuting matrices \sep Huaxin Lin's theorem \sep Joint diagonalizability \sep Riemannian approximate Newton method \sep Vector-wise algorithm
\MSC[2020] 15A18 \sep 15A27 \sep 15A45 \sep 15A60 \sep 58C15 \sep 65F15 \sep 65F35 
\end{keyword}

\date{}

\author[1]{Bowen Li\corref{mycorrespondingauthor}}\cortext[mycorrespondingauthor]{Corresponding authors}
\ead{bowen.li200@duke.edu}
\author[1,2]{Jianfeng Lu\corref{mycorrespondingauthor}}
\ead{jianfeng@math.duke.edu}
\author[3]{Ziang Yu\corref{mycorrespondingauthor}}
\ead{yuziang@sjtu.edu.cn}

\address[1]{Department of Mathematics, Duke University, United States}
\address[2]{ Department of Chemistry and Department of Physics, Duke University, United States}
\address[3]{School of Mathematical Sciences, Shanghai Jiao Tong University, China}

\maketitle

\section{Introduction}
It is a fundamental result in linear algebra that
a set of Hermitian matrices are commuting if and only if they are jointly diagonalizable by a unitary matrix. A closely related and long-standing question \cite{halmos1976some,rosenthal1969almost}, dating back to the 1950s, was that are almost commuting matrices near commuting pairs? It was answered affirmatively by 
Huaxin Lin \cite{Lin1997} in 1995, which gave the following celebrated theorem: for any $\epsilon>0$, there exists $\delta>0$, such that if $A,B\in\mathbb{C}^{n \times n}$ are Hermitian matrices satisfying $\norm{A}, \norm{B} \leq 1$ and $\norm{AB-BA}<\delta$, then there exist commuting Hermitian matrices $A^\prime,B^\prime$ such that $\norm{A^\prime-A},\norm{B^\prime-B}<\epsilon$. A simplified proof of Lin's result was provided soon after by 
Friis and R\o rdam \cite{friis1996almost}, and
an analogous result in the real symmetric case is recently given in \cite{Loring2016} by  Loring and S\o rensen. However, their arguments were abstract and did not show how to construct these matrices $A', B'$, which would be very useful from the 
application point of view \cite{Loring2016}. In 2008, Hastings \cite{Hastings2009} suggested a constructive approach to finding the nearby commuting matrices $A'$ and $B'$ and discussed the dependence of $\d$ on $\epsilon$ (see also \cite{Herrera2020}
for the extended discussions about Hastings' method).
Later, Kachkovskiy and Safarov, in their seminal work
\cite{kachkovskiy2016distance}, revisited this problem in the framework of operator algebra theory and gave the optimal dependence of $\d$ on $\epsilon$ by a  different argument from the one in \cite{Hastings2009}. We state their result below for future reference. The interested readers are referred to
\cite{Herrera2020,kachkovskiy2016distance} for a more complete historical review of Lin's theorem and its development. \mb{A list of notations used throughout this work is
given at the end of this section.}

\begin{theorem}[\cite{kachkovskiy2016distance}] \label{linthm}
For any two self-adjoint operators $A,B$ on a Hilbert space $H$, there exists a pair of commuting self-adjoint operators $A',B'$ such that 
\begin{align} \label{eq:linest}
    \norm{A - A'} + \norm{B - B'} \le C \norm{[A,B]}^{1/2}\,,
\end{align}
where \mb{$[A,B] = AB - BA$ is the commutator} and 
the constant $C$ is independent of $A,B$ and the dimension of $H$.
\end{theorem}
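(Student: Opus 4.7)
The plan is to reduce the problem to a simultaneous block-diagonalization by partitioning the spectrum of $A$ at a length scale tuned to $\delta := \|[A,B]\|$. Set $\epsilon = C_0 \delta^{1/2}$ for a universal constant $C_0$ to be chosen at the end, introduce a partition of unity $\{\chi_k\}_{k\in\mathbb{Z}}$ of $\mathbb{R}$ subordinate to intervals of length $\sim \epsilon$ (smoothed so that $\|\chi_k'\|_\infty \lesssim \epsilon^{-1}$), and let $P_k = \chi_k(A)$ via the Borel functional calculus. Define the candidate commuting pair by quantizing $A$ along this grid and block-diagonalizing $B$ with respect to the resulting projections:
\begin{equation*}
A' = \sum_k (k\epsilon)\, E_k, \qquad B' = \sum_k E_k B E_k,
\end{equation*}
where $\{E_k\}$ are honest (not smoothed) spectral projections of $A$ for the intervals $[k\epsilon,(k+1)\epsilon)$. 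Then $A',B'$ are self-adjoint, and they commute because both are block-diagonal with respect to $\{E_k\}$.

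The first key step is the trivial bound $\|A-A'\|\le \epsilon$. The core analytic step is to show $\|B - B'\| \lesssim \delta/\epsilon$; this is where the commutator hypothesis enters. The plan is to use a functional calculus identity — either the Davies--Helffer--Sjöstrand almost-analytic extension or a direct Fourier representation $\chi_k(A) = \int \widehat{\chi_k}(t) e^{itA}\,dt$ — to express $[\chi_k(A),B]$ as an integral involving $[A,B]$ and thereby obtain $\sum_k \|[\chi_k(A),B]\|^2 \lesssim \delta^2/\epsilon$ in a suitable square-function sense. Combining these pieces, optimizing in $\epsilon$ by balancing $\epsilon \sim \delta/\epsilon$, gives $\|A-A'\|+\|B-B'\| \lesssim \delta^{1/2}$, matching the claim.

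Two subtleties require careful handling. First, the naive block-diagonalization $B' = \sum_k E_k B E_k$ using sharp projections is not literally $B$ minus a commutator of $B$ with a Lipschitz function of $A$; I would therefore first prove the estimate for the smoothed version $\widetilde{B}' = \sum_k \chi_k(A)^{1/2} B \chi_k(A)^{1/2}$ and then pass to the block form by a perturbation argument controlling the difference between smoothed and sharp cutoffs on the commutator scale. Second, to ensure the constant $C$ is independent of $\dim H$ (and to accommodate genuinely infinite-dimensional Hilbert spaces, where spectra can be continuous), every estimate must be phrased in terms of operator norms of functional-calculus expressions rather than matrix entries, and the partition of unity must be truncated to a finite collection using the boundedness of $A$ — or, for unbounded $A$, via a separate reduction using spectral truncation whose error is controlled by $\delta$.

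The main obstacle I anticipate is obtaining the sharp $\delta^{1/2}$ exponent rather than $\delta^{1/2}\log(1/\delta)$ or worse, since a direct Helffer--Sjöstrand estimate typically produces logarithmic losses from the derivative norms of $\chi_k$. Removing this logarithm seems to require either a more delicate choice of the partition (e.g.\ a resolution-of-identity by bump functions whose commutator estimates sum geometrically) or a second iteration in which the residual $B-B'$, which is now almost block-diagonal, is corrected at a finer scale. Making this two-scale iteration converge while keeping errors summable is, I expect, the heart of the argument and the step where the dimension-free nature of the constant $C$ is genuinely used.
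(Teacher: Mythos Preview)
The paper does not prove this statement. Theorem~\ref{linthm} is quoted verbatim from Kachkovskiy--Safarov \cite{kachkovskiy2016distance} as background (``We state their result below for later reference''), and no argument for it is given anywhere in the manuscript; the paper's own contributions begin in Section~\ref{sec:stability analysis} with the analysis of the functional $\mathcal{L}$ and the VJD algorithm. So there is nothing in the paper to compare your proposal against.

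As for the proposal on its own merits: your outline is the natural first attempt---quantize $A$ on an $\epsilon$-grid and block-diagonalize $B$---and you are right that it is essentially the Hastings-type approach. But the step you flag as ``the core analytic step,'' namely $\|B-B'\|\lesssim \delta/\epsilon$ for the sharp-projection block-diagonalization, is not merely delicate; it is false in general at that strength. The off-block pieces $E_j B E_k$ with $|j-k|=1$ are not controlled by a single commutator with a Lipschitz function of $A$, and the passage from smoothed cutoffs $\chi_k(A)$ to sharp projections $E_k$ is exactly where the argument breaks without additional structure. Your own diagnosis is accurate: a direct Helffer--Sj\"ostrand or Fourier estimate yields a logarithmic loss, and the ``two-scale iteration'' you gesture at is not what removes it. The Kachkovskiy--Safarov proof that the paper cites proceeds instead through a distance formula in $C^*$-algebras of real rank zero and a Fredholm-index obstruction argument, which is structurally quite different from the spectral-partition scheme you describe. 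If you want the sharp exponent with a dimension-free constant, that operator-algebraic machinery (or something of comparable depth) appears to be unavoidable; the block-diagonalization route, even iterated, has not been made to yield \eqref{eq:linest} without a logarithm.
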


The theory of almost commuting matrices and Lin's theorem find numerous applications in many fields, e.g., quantum mechanics \cite{hastings2008topology,Hastings2010,hastings2011topological,ogata2013approximating}, computer graphics \cite{Bronstein2013,kovnatsky2013coupled}, and signal processing \cite{Cardoso1993,Feng2003,Rutledge2013}. In particular, Hastings used Lin's theorem to claim the existence of localized Wannier functions for non-interacting fermions \cite{hastings2008topology}, which inspired a series of subsequent works \cite{Hastings2010,hastings2011topological}. For the application in computer science,
there is a famous algorithm called \emph{Joint Approximate Diagonalization} (JADE) developed by Cardoso and Souloumiac for blind source separation \cite{Cardoso1993,Cardoso1996}. Such an
algorithm has also been used in computational quantum chemistry to generate Wannier functions \cite{gygi2003computation}. The machine learning-related applications are recently discussed in \cite{glashoff2013matrix,kovnatsky2013coupled}. 
While numerous attempts have been made to constructively prove Lin's theorem and explore its applications in various areas, a numerically feasible algorithm for finding $A',B'$ in Theorem \ref{linthm} remains elusive.

In this work, our focus is on designing a fast algorithm for Theorem \ref{linthm} in the almost commuting regime. Given matrices $A,B \in \sn$ with $\norm{[A,B]} \ll 1$, we consider the following equivalent optimization problem:

\begin{equation}\label{eq:problem}
\begin{split} 
    & \min_{D_1,\, D_2 \in \diag{n},\, U \in O(n)} \mathcal{J}(D_1, D_2, U) := \norm{U^T AU - D_1}^2 + \norm{U^T B U - D_2}^2.
\end{split}  \tag{OP}
\end{equation}
In view of practical applications
\cite{eynard2012multimodal,glashoff2013matrix,gygi2003computation}, we will focus on real matrices, but all the results and discussions can be adapted to the complex case easily.  
It may be clear but worth emphasizing that any approximate solution to \eqref{eq:problem} gives a potential pair of commuting matrices $(A',B') = (U D_1 U^T, U D_2 U^T)$ satisfying the estimate \eqref{eq:linest}. The problem \eqref{eq:problem} is generally very difficult to solve by the existing manifold optimization techniques, since the operator norm involved is closely related to the Finsler geometry \cite{bhatia2007spectral}, which is a rather uncommon structure in the optimization community. Even if we replace the operator norm with the Frobenius one, \eqref{eq:problem} is still a challenging problem, due to the non-convexity of the objective function and the Stiefel manifold constraint. One old but popular method is a Jacobi-type algorithm represented in \cite{B-G1993} with ideas dating back to \cite{eberlein1970solution,goldstine1959procedure}. This algorithm minimizes the off-diagonal objective function: for Hermitian matrices $A$ and $B$, 
\begin{equation*} 
J(A,B,U) = {\rm off}(U^* A U) + {\rm off}(U^* B U)\,,    
\end{equation*}
by a sequence of unitary transformations constructed from Givens rotation matrices:
\begin{equation}\notag
   R(i,j,c,s) := I + (c-1)e_ie_i^T - \bar{s}e_ie_j^T + se_je_i^T + (\bar{c}-1)e_je_j^T\,,
\end{equation}
\mb{where $U$ is a unitary matrix with the complex adjoint $U^*$, ${\rm off}(A) := \sum_{\sss i \neq j}|a_{ij}|^2$ measures the off-diagonal part of a square matrix $A$, and parameters $c,s \in \C$ are the Jacobi angles satisfying $\abs{c}^{\sss 2} + \abs{s}^{\sss 2} = 1$.} The optimal $c$ and $s$ are derived in \cite[Theorem 1.1]{Cardoso1996} in a closed-form expression. For the readers' convenience, we briefly summarize the main procedures of the Jacobi algorithm \cite{B-G1993} in the following pseudocode.

\vspace{2mm}
\RestyleAlgo{ruled}
\begin{algorithm}[!htbp]
\caption{Jacobi algorithm for simultaneous diagonalization \cite{B-G1993,Cardoso1996}}\label{alg:jacobi}
\KwIn{$\epsilon > 0$; Hermitian commuting matrices $A, B$}
\KwOut{Unitary matrix $U$ such that ${\rm off}(U^*AU) + {\rm off}(U^*B U) \le \epsilon (\norm{A}_{\rm F} + \norm{B}_{\rm F})$}

$U = I$\;
\While{${\rm off}(A) + {\rm off}(B) > \epsilon(\fnorm{A}+\fnorm{B})$}{
    \For{$i = 1,\cdots,n$ and $j = i+1,\cdots,n$}{
        set parameters $c$ and $s$ as described in \cite{Cardoso1996}
        
        $R = R(i,j,c,s)$
        
        $U = UR$; $A = R^*AR$; $B = R^*BR$
    }
}
\end{algorithm}
\vspace{2mm}

Algorithm \ref{alg:jacobi} is known to have the quadratic asymptotic convergence rate \cite{B-G1993} and can be efficiently implemented in the parallel framework as discussed in \cite{berry1986multiprocessor,eberlein1987one}. Another advantage of the Jacobi algorithm is its numerical stability against rounding errors. It is worth noting that Theis \cite{theis2006towards} extended the above Jacobi algorithm for the joint block diagonalization, but the output block decomposition may not be the finest one.  For the finest block diagonalization, a class of algorithms based on the theory of matrix $*$-algebra 
has been proposed in \cite{de2011numerical} and developed in \cite{maehara2010numerical,maehara2011algorithm,murota2010numerical}. Other joint diagonalization techniques in various problem settings include a gradient-based algorithm for approximately diagonalizing almost commuting Laplacians \cite{Bronstein2013}, subspace fitting methods with non-orthogonal transformations \cite{Van2001, Yeredor2000}, and the fast approximate joint diagonalization free of trivial solutions \cite{Li2007}, just to name a few. \mb{In particular, Riemannian optimization algorithms have been recently explored for the joint diagonalization problem with or without orthogonality constraints. Theis et al. \cite{theis2009soft} applied the Riemannian trust-region method to the joint diagonalization on the Stiefel manifold. Within the same framework, an efficient Riemannian Newton-type method was investigated in \cite{sato2017riemannian} by Sato. 
More recently, Bouchard et al. \cite{bouchard2020approximate} proposed a unified Riemannian optimization framework for the approximate joint diagonalization on the general linear group, which generalizes previous algorithms \cite{absil2006joint,afsari2004some}.}

\mb{The joint matrix diagonalization problem is also closely related to the canonical polyadic decomposition (CPD). In 
\cite{de2006link}, De Lathauwer derived a sufficient condition for the uniqueness of CPD and showed that the decomposition can be obtained by a simultaneous diagonalization by congruence. S\o rensen \cite{sorensen2012canonical} proved that for a given 
tensor, the optimal low-rank CPD approximation exists with one of the matrix factors being column-wise orthonormal. In particular, numerical algorithms based on joint congruence diagonalization and alternating least squares are also explored in \cite{sorensen2012canonical} for this constrained CPD. We also mention that in a recent work \cite{evert2022guarantees} of Evert and De Lathauwer, they discussed the optimal CPD of a noisy low-rank tensor and connected it to a joint generalized
eigenvalue problem.}

In this work, we propose a practically efficient vector-wise algorithm for finding the approximate solution to the optimization problem \eqref{eq:problem} in the case where $\norm{[A, B]} \ll 1$, and present the associated error analysis. Numerical experiments demonstrate that our algorithm has comparable or even better performance than that of the state-of-the-art Jacobi-type method while significantly reducing computational time. 
Moreover, we numerically compute the error between the given almost commuting matrices $(A, B)$ and the commuting ones $(A', B')$ constructed by our algorithm. It turns out that in the almost commuting regime, $\norm{A - A'} + \norm{B - B'}$ is of the order $\norm{[A,B]}$, and thus the constructed $(A',B')$ satisfies the upper bound \eqref{eq:linest} in Lin's theorem. Some applications in independent component analysis are also provided to justify the effectiveness of our method.

We now summarize the main idea and contributions of this work in detail. Our method is motivated by the simple fact that the commutative matrices $A,B$ can be simultaneously diagonalized by an orthogonal matrix whose columns consist of common orthonormal eigenvectors of $A,B$, which can be characterized by the global minimizers of the following optimization problem:
\begin{equation}\label{eq:sub-problem}
\begin{split} 
    & \mb{\min_{\Lad = (\lad,\mu) \in \R^2\,, v\in \S^{n-1}}  \mathcal{L}(\Lad,v)}: = \norm{A v - \lad v}^2 + \norm{B v - \mu v}^2\,.
\end{split} \tag{${\rm OP}_1$}
\end{equation} 
Note that $\mc{L}(\Lad,v)$ admits $n$ minimizers $(\Lad_j, v_j)$ with $\mc{L}(\Lad_j, v_j) = 0$ and $v_j$ 
being mutually orthonormal. This observation leads to an ideal approach for diagonalizing commuting matrices $(A,B)$. Suppose that we have successfully solved the first $i$ minimizers $\{(\Lad_j, v_j)\}_{j = 1}^i$ of $\mc{L}(\Lad,v)$. To determine the $i+1$-th minimizer, we solve \eqref{eq:sub-problem} but with an additional orthogonal constraint:
\begin{align} \label{eq:subsub_pro}
    \min \left\{\mathcal{L}(\Lad, v)\,;\  \Lad \in \R^2\,,\  v \in \S^{n-1}\,,\  \l v, v_j \r = 0\,, 1 \le j \le i \right\}\,.
\end{align}
However, this elegant approach may encounter practical challenges due to numerical noise and errors. Since we can not solve the optimization problem \eqref{eq:subsub_pro} exactly at each step, the numerical errors will accumulate so that the constraints $\l v, v_j \r = 0$ may exclude the minimizers of 
the cost functional $\mc{L}$, which leads to the failure of the algorithm for finding the complete orthonormal set of common eigenvectors. Moreover, the input commuting matrices are generally subject to numerical noises or rounding errors. As proved in Lemma 
\ref{lem:opendensenocommon}, any such small perturbation could disrupt all the common eigenvectors. Consequently, the functional $\mathcal{L}$ might not exhibit sufficient local minimizers, in which case the algorithm will also fail.  

Our first contribution is Theorem  \ref{thm:generic_almost}, which proves that 
in the almost commuting regime, $\mc{L}(\Lad,v)$ generically has $n$ local minimizers $\{(\Lad_j, v_j)\}_{i = 1}^n$ with $v_j$ being almost orthogonal to each other (which may be viewed as \emph{approximate common eigenvectors}). For this, we provide a stability result in Theorem \ref{them:stab} by using perturbation techniques. Then, based on the theory of Gaussian orthogonal ensemble (GOE) \cite{anderson2010introduction,tao2012topics},  we show that the common eigenspaces of 
commuting matrices are generically of one dimension from both topological and probabilistic points of view (cf.\,Lemma \ref{lem:open_dense_common} and Corollary \ref{lem:high_prob_common}), similarly to the well-known genericity of simple eigenvalues of self-adjoint operators \cite{tao2017random,teytel1999rare,uhlenbeck1976generic}. 
Combining these results with the estimates of the spectral gaps for random matrices in \cite{nguyen2017random}, we can immediately conclude the desired claim in Theorem  \ref{thm:generic_almost}.

In view of this generic property of the local minimizers of $\mc{L}(\Lad,v)$, we consider a relaxed version of the 
aforementioned approach for finding the columns of the orthogonal matrix $U$ and the diagonal entries of matrices $\Lad_i$ sequentially, called the \emph{vector-wise joint diagonalization} (VJD); see Algorithm \ref{alg:VJD}. The core idea behind VJD is to relax the exact orthogonality constraint $\l v, v_j \r = 0$ in \eqref{eq:subsub_pro} to $|\l v, v_j \r| \le \ep$. This relaxation accommodates possible numerical errors and non-commutativity of input matrices. However, enforcing constraints of the form $|\l v, v_j \r| \le \ep$, $1 \le j \le i$, can be numerically challenging. Therefore, in practice, we adopt the equivalent constraint: 
\begin{equation} \label{eq:const}
    {\rm dist}(v, S_i)  \le \sqrt{\ep}\,, 
\end{equation}
where $S_i = {\rm span}\{v_1,\cdots,v_i\}^\perp \cap \S^{n-1}$ and $\ep$ is the error tolerance parameter. Notably, this form \eqref{eq:const} allows for the straightforward computation of the associated projection, as shown in Lemma \ref{lem:projection}. The VJD algorithm proceeds as follows; see Section \ref{sec:algorithm} for more details. We first solve \eqref{eq:sub-problem} to find $(\lad_1, \mu_1, v_1)$. Then, for $j \ge 2$,
we consider the relaxed problem of \eqref{eq:subsub_pro}: 
\begin{equation}\label{eq:problem with inequality constraint}
 \min \left\{\mathcal{L}(\Lad, v)\,;\  \Lad = (\lad,\mu) \in \R^2\,,\  v \in \S^{n-1}\,,\  {\rm dist}(v, S_{j-1}) \le \sqrt{\ep} \right\}\,, \tag{${\rm OP}_2$}
\end{equation}
which gives $(\lad_j,\mu_j,v_j)$. This iterative process culminates in the construction of an almost orthogonal matrix $V = [v_1,\cdots,v_n]$ and two diagonal matrices: $D_1 = {\rm diag}(\lad_1,\ldots,\lad_n)$ and $D_2 = {\rm diag}(\mu_1,\ldots,\mu_n)$. The algorithm ends up with a classical nearest orthogonal matrix problem (i.e., orthogonal Procrustes problem \cite{schonemann1966generalized}): 
\begin{align} \label{eq:orthogonalize}
    \min \{\norm{U - V}\,;\ U \in \on\} \,,
\end{align}
where $\norm{\dd}$ is either the spectral norm $\norm{\dd}_{{\rm op}}$ or the Frobenius norm $\norm{\dd}_{{\rm F}}$. In both cases, the problem \eqref{eq:orthogonalize} has an analytic solution $U = U_1U_2^T$, with $V=U_1\Sigma U_2^T$ being the singular value decomposition (SVD) of $V$. \mb{For solving sub-optimization problems \eqref{eq:sub-problem} and \eqref{eq:problem with inequality constraint}, we derive a Riemannian approximate Newton-type method with the approximate Hessian matrix being structurally simple and positive semidefinite; see Algorithm \ref{alg:for ineq}.} Our method exploits the second-order information of the cost function and enjoys stability with low computational cost. We would like to remark that there might be some other algorithms that are also potentially suitable for our optimization problems, for example, the trust-region method \cite{Absil2007}, the column-wise block coordinate descent method \cite{Gao2018,gao2019parallelizable}, the Cayley transform-based optimization \cite{Wen2013}, and the consensus-based optimization method \cite{fornasier2021consensus,ha2022stochastic,kim2020stochastic}, but the detailed discussion about these approaches is beyond the scope of this work.

Our third contribution lies in establishing the global convergence of the proposed Riemannian approximate Newton method for \eqref{eq:sub-problem} and demonstrating the numerical stability of our VJD algorithm.
\mb{By leveraging techniques from \cite{bortoloti2022efficient,li2019convergence,yang2007globally} for Newton-type methods on manifolds, we prove in Theorem \ref{thm:glob_conv} that Algorithm  \ref{alg:for ineq} for \eqref{eq:sub-problem} converges linearly with the convergence rate $O(\norm{[A,B]}^{1/2}) \ll 1$
and quadratically if $[A,B] = 0$.}  However, it is important to note that our optimization process, owing to its sequential nature, does not exactly correspond to solving \eqref{eq:problem} when $A$ and $B$ are non-commuting matrices. Nevertheless, based on empirical observations from numerical experiments, our method provides a good approximation to the original problem. To further substantiate this numerical advantage, we theoretically establish the error-tolerant property of our VJD algorithm in Proposition \ref{prop:numeri_sta}.
\mb{We remark that although our discussions primarily focus on a pair of almost commuting matrices, our algorithm and analysis can be directly extended to a family of matrices that are almost commuting pairwisely.}

This work is organized as follows. Section \ref{sec:stability analysis} delves into the stability and generic behaviors of local minimizers of $\mc{L}(\Lad,v)$, serving as motivation for the VJD algorithm outlined in Section \ref{sec:algorithm}, which relies on an efficient approximate Newton method for sub-optimization problems. Additionally, Section \ref{sec:error analysis} is dedicated to the examination of the convergence properties and numerical stability of the presented algorithms. In Section \ref{sec:experiments},  various numerical examples are presented to justify the efficiency of our method.

\medskip

\noindent \textbf{Notations.}

\begin{enumerate}[\textbullet]
    \item  
   Let $\sn$ be the set of symmetric $n \t n$ matrices, and $\on$ be the orthogonal group in dimension $n$. We denote by ${\rm diag}(n)$ the space of $n \t n$ diagonal matrices, and by ${\rm diag}_{\le}(n)$ the closed subset of ${\rm diag}(n)$ with increasing diagonal entries $\lad_1 \le \cdots \le \lad_n$. By abuse of notation, we also write $\diag{\lad_1,\ldots,\lad_n}$ for the diagonal matrix with diagonal entries  $\{\lad_i\}_{i = 1}^n$.
    \mb{We denote $A \succeq 0$ (resp., $A \succ 0$) for $A \in \sn$ if $A$ is positive semidefinite (resp., positive definite).}
    \item For a matrix $M \in \R^{m \t n}$, $M_{ij}$ denotes its $(i,j)$ element. Moreover, we denote by $\norm{\dd}_{{\rm F}}$ the Frobenius norm, and by $\norm{\dd}_{{\rm op}}$ (or, simply, $\norm{\dd}$) the operator norm.
\item Let $\l \dd, \dd\r$ be the Euclidean inner product of $\R^n$ and $\norm{\dd}$ be the associated norm. \mb{The standard basis of $\R^n$ is given by $\{e_i\}_{i = 1}^n$, where $e_i$ denotes the vector with $1$ in the $i$-th coordinate and $0$ elsewhere. We will not distinguish the row vector $(x_1,\ldots,x_n)$ and the column one $(x_1,\ldots,x_n)^T$ unless otherwise specified.}
We define the $(n-1)$-dimensional unit sphere by $
    \S^{n-1} := \{ x = (x_1,\ldots,x_n) \in \R^n\,;\ \norm{x} = 1\}.
$
\item We denote by $\si(X)$ the spectrum of a matrix $X \in \sn$ and by $\d(X)$ its spectral gap:
$$
    \d(X) = \min\{|\lad_i - \lad_j|\,;\ \lad_i \neq \lad_j\,,\  \lad_i,\lad_j \in \si(X)\}\,.
$$
\mb{We say that an eigenvalue $\lad \in \si(X)$ of $X \in \sn$ is simple (resp., degenerate) if its multiplicity is equal to (resp., greater than) one.}  
\item \mb{We recall the commutator $[A, B] = AB - BA$ of two matrices $A, B \in \R^{n \times n}$ and define the set of commuting matrices by 
$${\rm F}_{\rm com} := \{(A,B) \in \sn \t \sn \,; \ [A,B] = 0\}\,.$$ Then, we say that $\Lad := (\lad, \mu) \in \si(A) \times \si(B)$ is an \emph{eigenvalue
pair} of $(A,B) \in {\rm F}_{\rm com}$ if the associated common eigenspace is non-empty:
\begin{equation}\label{defcomm}
E_{A,B}^{\Lad} := \{v \in \R^n\,;\ A v = \lad v,\, B v = \mu v\} \neq \emptyset\,. 
\end{equation}
With slight abuse of notation, we define the joint spectrum $\si(A,B) \subset \R^2$ of $(A, B) \in {\rm F}_{\rm com}$ by the set of all its eigenvalue pairs $\Lad_j$ and the eigenvalue gap by 
\begin{equation} \label{eqgapp}
\d(A,B) := \min\{\norm{\Lad_i - \Lad_j}\,;\ \Lad_i \neq \Lad_j\,,\  \Lad_i, \Lad_j \in \si(A,B)\}\,.
\end{equation}
We also denote by ${\rm conv}(\si(A,B)) \subset \R^2$ the convex hull of $\si(A,B)$.
The multiplicity of $\Lad \in \si(A,B)$ is defined by the dimension of the subspace $E_{A,B}^{\Lad}$, and 
$\Lad \in \si(A,B)$ is said to be simple (resp., degenerate) if $\dim(E_{A,B}^{\Lad})$ is equal to (resp., greater than) one.} 
\end{enumerate}

\section{Analysis of local minimizers of \(\eqref{eq:sub-problem}\)}
\label{sec:stability analysis}

This section is devoted to the analysis of the local minimizers of the functional $\mc{L}(\Lad,v)$ in \eqref{eq:sub-problem}. \mb{Note $\mc{L}(\Lad,v) = \mc{L}(\Lad,-v)$ and that if $(\Lad,v)$ is a local minimum of $\mc{L}$, so is $(\Lad, -v)$. It is convenient to consider a point $(\Lad,v) \in \R^2 \t \S^{n-1}$ as an equivalence class $\{(\Lad,v), (\Lad,-v)\}$ in what follows. We shall first derive the optimality conditions of \eqref{eq:sub-problem} and characterize the minimizers of $\mc{L}$ in the commuting case.} Then, we
establish some stability estimates on the minimizers by perturbation arguments. We also show that with high probability $1-o(1)$, 
the functional $\mc{L}(\Lad,v)$ for almost commuting random matrices has $n$ local minima $\{(\Lad_i,v_i)\}_{i = 1}^n$ on the manifold $\R^2 \t 
\mathbb{S}^{n-1}$ with minimizing vectors $v_i \in \mathbb{S}^{n-1}$ almost orthogonal to each other. 
These findings shed light on designing the column-wise algorithm for the optimization problem \eqref{eq:problem}, which we will discuss in detail in the next section. 

Recall that in this work, we consider the almost commuting symmetric matrices $(A,B)$ with $\norm{[A,B]} \ll 1$. By Theorem \ref{linthm}, without loss of generality, we assume that the matrices $(A,B)$ can be written as 
\begin{align} \label{eq:pertburab}
    A = A_* + \Delta A\,, \q B = B_* + \Delta B\,,
\end{align}
for some commuting matrices $(A_*, B_*) \in {\rm F}_{\rm com}$ with
\begin{equation} \label{auxeqlin}
\norm{\De A} + \norm{\De B} \le C \norm{[A,B]}^{1/2} \ll 1\,.    
\end{equation}
Note that in the case of $[A,B] = 0$ (i.e., $\De A = \De B = 0$), we can always find $n$ local (also global) minima of $\mc{L}(\Lad,v)$ (with minimum value zero) given by the eigenvalue pairs of $(A,B)$ and the associated common orthonormal eigenvectors. \mb{Indeed, the converse is also true; see Proposition \ref{prob:characmini} below. 

We start with the optimality conditions for \eqref{eq:sub-problem}. For ease of exposition, we recall some basic concepts on Riemannian optimization on a smooth manifold $\mc{M}$ embedded in an Euclidean space $\R^n$ \cite{Absil2009,boumal2022intromanifolds}. We denote by $T_x \mc{M}$ the tangent space at $x \in \mc{M}$ and by $\mc{P}_{x}$ the orthogonal projection from $\R^n$ to $T_x \mc{M}$. Let $f$ be a smooth function on $\mc{M}$, which admits a smooth extension $\bar{f}$ to a neighborhood of $\mc{M}$ in $\R^n$. In what follows,
we will denote by $\na$ and $\na^2$ the standard Euclidean gradient and Hessian, respectively. Then, the Riemannian gradient of $f$ is given by 
\begin{align} \label{eq:riegrad0}
    \grad f(x) = \mc{P}_x (\na \bar{f}(x))\,;
\end{align}
see \cite[Proposition 3.53]{boumal2022intromanifolds}. Letting $\bar{G}$ be any smooth extension of $\grad f$, the Riemannian Hessian of $f$, as a linear map on $T_x \mc{M}$, can be computed by \cite[Corollary 5.14]{boumal2022intromanifolds}
\begin{align} \label{eq:riehess0}
    \hess f(x) = \mc{P}_x(\na \bar{G}(x))\,.
\end{align}
In the special case of $\mc{M} = \S^{n-1}$,} we have 
the tangent space $T_v \S^{n - 1} = \{p \in \R^{n}\,;\ \l p, v\r = 0\}$ at $v \in \S^{n-1}$ with the associated projection $\mc{P}_v$ given by $I_n - v v^T$. Thanks to \eqref{eq:riegrad0} and \eqref{eq:riehess0} above, the Riemannian gradient and Hessian of $f$ on $\S^{n-1}$ can be explicitly computed as 
\begin{align} \label{eq:riegrad}
    \grad f(v) = (I_n - v v^T) \na \bar{f}(v)\,, \q  \hess f(v) = (I_n - v v^T) (\na^2 \bar{f}(v) - \l \na \bar{f}(v), v\r I_n)\,.
\end{align}
We also recall the notion of retraction on $\S^{n-1}$ \cite[Definition 3.41]{boumal2022intromanifolds}. In this work, we limit our discussion to the following one for its simplicity (another popular choice is the exponential map):
\begin{align} \label{def:retraction}
    R_v(p) = \frac{v + p}{\norm{v + p}}\,,\q v \in \S^{n-1}\,,\ p \in T_v \S^{n-1}\,.
\end{align}
It is worth noting that $R_v$ in \eqref{def:retraction} is a second-order retraction on $\S^{n-1}$, i.e., for any $v \in \S^{n-1}$ and $p \in T_v \S^{n-1}$, the curve $c(t) = R_v(tp)$ satisfies $c(0) = 0$, $c'(0) = p$, and $c''(0) = 0$; see \cite[Definition 5.41]{boumal2022intromanifolds}.

We now compute the Riemannian gradient of the functional $\mc{L}(\Lad,v)$ on the manifold $\R^2 \t \S^{n-1}$ by  \eqref{eq:riegrad0}. Note that $\mc{L}$ is well-defined on $\R^{2 + n}$. For $\Lad = (\lad,\mu) \in \R^2$ and $v \in \S^{n-1}$, we have
\begin{align} \label{eq:gradl}
    \grad\mc{L}(\Lad,v) =  \mm 
   2 \lad - 2 \l v, Av \r \\
   2 \mu -  2 \l v, Bv\r \\
    \grad_v \mc{L}(\Lad,v) 
    \nn \q \text{with}\q   \grad_v \mc{L}(\Lad,v) = 2 \left(I_n - vv^T\right) \left(
    (A - \lad)^2 + (B - \mu )^2\right) v\,,
\end{align}
where $\grad_v \mc{L}$ denotes the Riemannian gradient of $\mc{L}(\Lad,v)$ in $v \in \S^{n-1}$. \mb{Similarly, by \eqref{eq:riehess0}, we calculate the Riemannian Hessian of $\mc{L}$ as follows:}
\begin{align} \label{eq:hessl}
    \hess\mc{L}(\Lad,v) & = 
    \mm 
    2 I_2 & W^T \\
   W & \hess_v\mc{L}(\Lad,v)
    \nn,
\end{align}
where the matrices $W \in \R^{n\t 2}$ and $\hess_v\mc{L}(\Lad,v) \in \sn$ are given by 
\begin{align} \label{eq:wwhess}
    W = - 4 \mm 
    \left(I_n - v v^T \right)A v & \left(I_n - v v^T \right) Bv  \nn,
\end{align}
and 
\begin{align} \label{eq:rieman_Lv}
    \hess_v\mc{L}(\Lad,v) =  (I_n - v v^T) \left( \na^2_v \mc{L}(\Lad,v) - \l v, \na^2_v \mc{L}(\Lad,v) v\r I_n \right).
\end{align}
Here, $\na^2_v \mc{L}(\Lad,v)$ is the Euclidean Hessian of $\mc{L}(\Lad,v)$ in $v$:
\begin{align}\label{eq:eucl_hess}
\na^2_v \mc{L}(\Lad,v) = 2 (A - \lad)^2 + 2 (B - \mu )^2\,.
\end{align}

\mb{We define the stationary points of \eqref{eq:sub-problem} by $\{(\Lad, v) \in \R^2 \times \S^{n-1}\,;\ \grad \mc{L}(\Lad,v) = 0\}$. Then, if $(\Lad, v)$ is a local minimizer of \eqref{eq:sub-problem}, then it is a stationary point and there holds $\hess \mc{L}(\Lad,v) \succeq 0$ on the tangent space $T_{(\Lad,v)} (\R^2 \t \S^{n-1}) = \R^2 \t T_v \S^{n-1}$. Conversely, if $(\Lad,v)$ is a stationary point such that $\hess \mc{L}(\Lad,v)
\succ 0$ on $\R^2 \t T_v \S^{n-1}$, then it is an isolated local minimizer of \eqref{eq:sub-problem}. We next give a characterization for the stationary points of \eqref{eq:sub-problem} for commuting matrices. We denote by $\Delta_n$ the $n$-simplex: 
\begin{equation*}
    \Delta_n = \Big\{(p_i)_i \in \R^n\,;\ p_i \in [0,1],\ \sum_{i = 1}^n p_i = 1 \Big\}\,.
\end{equation*}

\begin{proposition} \label{lem:statpoint}
Given $(A,B) \in {\rm F}_{\rm com}$, let $\{\Lad_i\}_{i = 1}^n$ be its eigenvalue pairs (counting multiplicity) and $\{v_i\}_{i = 1}^n \subset \S^{n-1}$ be the associated common orthonormal eigenvectors. 
We define the subset of $\Delta_n$:
\begin{equation} \label{eqsimstat}
    \Delta_{\rm sta}: = \Big\{(p_i)_i \in \Delta_n\,;\ \Lad = \sum_{i = 1}^n p_i \Lad_i\,,\ \norm{\Lad - \Lad_i}\ \text{is constant for}\ \{i\,; \  p_i \neq 0\}\Big\}\,.
\end{equation}
Then, the stationary points of \eqref{eq:sub-problem} are given by
\begin{equation} \label{eq:repsta}
   \Lad = \sum_{i = 1}^n p_i \Lad_i \in {\rm conv}(\si(A,B))\,,\  v = \sum_{i = 1}^n c_i v_i \in \S^{n-1}\,,
\end{equation}
with $(p_i)_i \in \Delta_{\rm sta}$ and $c_i = \pm \sqrt{p_i}$. 
\end{proposition}

\begin{proof}
It is clear from \eqref{eq:gradl} that $\grad\mc{L}(\Lad,v) = 0$ 
is equivalent to $\Lad = (\l v, A v\r, \l v, B v\r)$ and 
\begin{equation} \label{eqstat}
    ((A - \lad)^2 + (B - \mu )^2) v =  \xi v\,,
\end{equation}
for some constant $\xi \ge 0$. We expand $v = \sum_{i} c_i v_i$ by the common eigenvectors $\{v_i\}_{i = 1}^n$ with the eigenvalue pairs $\{\Lad_i\}_{i = 1}^n$, and find that the equation \eqref{eqstat} reduces to
\begin{equation}  \label{eqstat2}
    \norm{\Lad_i - \Lad}^2 c_i = \xi c_i\,, \q \text{for}\ i = 1,\ldots,n\,,
\end{equation}
 where $\Lad = (\l v, A v\r, \l v, B v\r) = \sum_i |c_i|^2 \Lad_i \in {\rm conv}(\si(A,B))$. Then, it readily follows that $\norm{\Lad_i - \Lad}^2 = \xi$ for any $i$ with $c_i \neq 0$. The proof is complete. 
\end{proof}

From Proposition \ref{lem:statpoint} above, we see that
for $(A,B) \in {\rm F}_{\rm com}$, the stationary points of \eqref{eq:sub-problem} are essentially characterized by the set $\Delta_{\rm sta}$ in \eqref{eqsimstat}, which always includes the nodes $e_i$ and their averages $(e_i + e_j)/2$, while the full analytical characterization of $\Delta_{\rm sta}$ would rely on the distribution of the eigenvalue pairs $\{\Lad_i\}$ of $(A,B)$. The following result shows that in the commuting case, the local minimizers of \eqref{eq:sub-problem} are exactly given by eigenvalue pairs and the common 
eigenvectors. 

\begin{proposition} \label{prob:characmini}
For a given $(A,B) \in {\rm F}_{\rm com}$, 
$(\Lad,v)\in \R^2 \t \S^{n-1}$ is a local minimizer of \eqref{eq:sub-problem} if and only if $\Lad$ is an eigenvalue pair and $v$ is the associated common eigenvector. 
\end{proposition}

\begin{proof}
It suffices to prove the direction $(\Rightarrow)$. Since $(\Lad,v)$ is a local minimizer, we have $\grad \mc{L} (\Lad,v) = 0$ and $\hess \mc{L}(\Lad,v) \succeq 0$ on $\R^2 \t T_v \S^{n-1}$. It follows that the Schur complement of $2 I_2$ in $\hess \mc{L}(\Lad,v)$ is also positive semidefinite, that is, 
\begin{equation} \label{eq:posschur}
    \hess_v\mc{L}(\Lad,v) - 2^{-1} W W^T \succeq 0\,,\ \text{on}\ T_v \S^{n-1}\,.
\end{equation} 
Suppose that $(\Lad,v)$ is not an eigenvalue pair and the common eigenvector of $(A,B)$. From \eqref{eq:repsta}, $\Lad = \sum_{i \in \mc{I}} p_i \Lad_i$ is a convex combination of some non-identical $\{\Lad_i\}_{i \in \mc{I}}$ with $p_i \neq 0$ for $i \in \mc{I}$ and $v = \sum_{i \in \mc{I}} c_i v_i$ with $c_i = \pm \sqrt{p_i}$
(otherwise, $\Lad = \Lad_i$ for some $i$ and $v$ is the common eigenvector). Moreover, there holds $\norm{\Lad - \Lad_i}^2 = \xi$ for $i \in \mc{I}$ and some constant $\xi$. Then, for any $q = \sum_{i \in \mc{I}} b_i v_i \in T_v \S^{n-1}$ with $\norm{q} = 1$, we can compute, by \eqref{eq:rieman_Lv} and \eqref{eq:posschur}, 
\begin{align} \label{auxeqmini}
   & \big\l q, \big(\hess_v\mc{L}(\Lad,v) - 2^{-1} W W^T\big) q \big\r \notag \\
 = & \l q, \na^2_v \mc{L}(\Lad,v) q\r - \l v, \na^2_v \mc{L}(\Lad,v) v\r - 8 (\l q, A v\r^2 + \l q, B v\r^2) \notag \\
 = & 2 \xi - 2 \xi - 8 (\l q, A v\r^2 + \l q, B v\r^2) \ge 0\,.
\end{align} 
Noting that $\{\Lad_i = (\lad_i,\mu_i)\}_{i \in \mc{I}}$ are not identical, either $A v$ or $B v$ is not collinear with $v$. Thus, we can choose $q \in T_v \S^{n-1}$ such that $\l q, A v\r^2 + \l q, B v\r^2 > 0$, which contradicts with \eqref{auxeqmini} and completes the proof.
\end{proof}}

However, the following observation shows that any small perturbation of commuting symmetric matrices could eliminate all common eigenvectors.

\begin{lemma} \label{lem:opendensenocommon}
The set ${\rm M}: = \{(A,B)\in \sn \t \sn\,;\ (A,B)\ \text{has \emph{no} common eigenvectors}\}$ is an open dense subset of the space $\sn \t \sn$. 
\end{lemma}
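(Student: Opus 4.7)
The plan has two parts: openness by a compactness argument, and density via a two-step perturbation that first gives $A$ a simple spectrum and then breaks every remaining eigenvector coincidence with $B$. For openness, I would show the complement $M^c$---pairs sharing at least one unit common eigenvector---is closed. If $(A_k,B_k) \to (A,B)$ with each pair admitting a common unit eigenvector $v_k$ and eigenvalues $\lambda_k = \l v_k, A_k v_k\r$, $\mu_k = \l v_k, B_k v_k\r$, the bounds $|\lambda_k| \le \|A_k\|$, $|\mu_k| \le \|B_k\|$ together with compactness of $\S^{n-1}$ allow passage to a subsequence with $v_k \to v$, $\lambda_k \to \lambda$, $\mu_k \to \mu$. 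Taking limits in $A_k v_k = \lambda_k v_k$ and $B_k v_k = \mu_k v_k$ gives $Av = \lambda v$ and $Bv = \mu v$, so $(A,B) \in M^c$.

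For density, fix $(A,B) \in \sn \t \sn$ and $\varepsilon > 0$. I first replace $A$ by a nearby $A' \in \sn$ with simple spectrum, using the classical density of simple-spectrum symmetric matrices: writing $A = U_0 D U_0^T$, perturb the diagonal of $D$ to $n$ pairwise distinct values by an arbitrarily small amount. Let $u_1,\ldots,u_n$ be orthonormal eigenvectors of $A'$ and set $U = [u_1,\ldots,u_n] \in \on$. The simple-spectrum condition is the key leverage: the only unit eigenvectors of $A'$ are $\pm u_k$, so every common eigenvector of $(A', B')$ must be some $\pm u_k$, and $u_k$ is an eigenvector of $B'$ precisely when all off-diagonal entries in the $k$-th column of $U^T B' U$ vanish. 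Thus ``no common eigenvector'' reduces to the condition that every off-diagonal entry of $U^T B' U$ is nonzero.

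To enforce this, I would take $B' = B + \eta\, U S U^T$ where $S \in \sn$ is any fixed matrix with all off-diagonal entries nonzero, and choose $\eta > 0$ small enough that $\|B'-B\| < \varepsilon$ while avoiding the finitely many values of $\eta$ at which some entry $(U^T B U + \eta S)_{ij}$ with $i \neq j$ vanishes. Then no $u_k$ is an eigenvector of $B'$, so $(A',B') \in M$, completing the density argument. The only mild subtlety is the order of operations---$U$ depends on the first perturbation and must be fixed before constructing the second---but since both perturbations can be made independently small this poses no real obstacle. The main conceptual step is therefore the reduction afforded by simple spectrum, after which the density becomes an elementary coordinate-wise statement about a generic symmetric matrix having nonzero off-diagonal entries in a chosen basis.
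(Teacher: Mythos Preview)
Your proof is correct, and both halves take a route that differs from the paper's.

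For openness, the paper argues by writing the set of eigenvectors of $A$ (resp.\ $B$) as a finite union of closed spheres $\cup_i S_{i,A}$ (resp.\ $\cup_j S_{j,B}$) in $\S^{n-1}$, noting that ``no common eigenvector'' means these two closed sets are disjoint, separating them by open neighborhoods, and then invoking eigenspace perturbation theory to show nearby pairs stay in $M$. Your sequential--compactness argument on $M^c$ is more direct and avoids any appeal to perturbation theory; it is the cleaner of the two.

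For density, the paper first perturbs \emph{both} $A$ and $B$ to have simple spectrum, then fixes $A$ and replaces $B$ by $U_\varepsilon V \Lambda (U_\varepsilon V)^T$ for a small rotation $U_\varepsilon$ of the eigenvectors $V$ of $B$, chosen so that $U_\varepsilon v_i$ misses every eigenvector of $A$. You instead perturb only $A$ to simple spectrum, pass to its eigenbasis $U$, and then add $\eta\,USU^T$ to $B$ with $S$ having all off-diagonal entries nonzero; avoiding finitely many bad values of $\eta$ forces every off-diagonal entry of $U^TB'U$ to be nonzero. Both arguments hinge on the same reduction---once $A'$ has simple spectrum, the only candidate common eigenvectors are $\pm u_k$---but your additive perturbation is more transparent and sidesteps the slightly delicate bookkeeping the paper needs for its rotation. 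One small wording point: ``no common eigenvector'' is actually equivalent to the weaker condition that each \emph{column} of $U^TB'U$ has at least one nonzero off-diagonal entry; your stronger condition (all off-diagonal entries nonzero) is of course sufficient, which is all you need.
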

\begin{proof}
Let $\{\lad_i\}_{i \in \mc{I}_A}$ and $\{\mu_i\}_{i \in \mc{I}_B}$ be the distinct eigenvalues of $A$ and $B$ with associated eigenspaces $E_{i,A}$ and 
$E_{i,B}$, respectively. We define $S_{i,A}: = \mathbb{S}^{n-1} \cap E_{i,A}$ and $S_{i,B}: = \mathbb{S}^{n-1} \cap E_{i,B}$, and see that both $\cup_{i \in \mc{I}_A} S_{i,A}$ and $\cup_{i \in \mc{I}_B} S_{i,B}$ are closed subsets of $\mathbb{S}^{n-1}$, and 
$(A,B)$ has no common eigenvectors if and only if the set $(\cup_{i \in \mc{I}_A} S_{i,A}) \cap (\cup_{i \in \mc{I}_B} S_{i,B})$ is empty. It follows that there exist open neighbourhoods of $(\cup_{i \in \mc{I}_A} S_{i,A})$ and  $(\cup_{i \in \mc{I}_B} S_{i,B})$ in $\mathbb{S}^{n-1}$ with empty intersection.
Then, by the perturbation theory, we can conclude that the set ${\rm M}$ is open in $\sn \t \sn$.  
We next show that ${\rm M}$ is also dense. It suffices to prove that for $A,B \in \sn$ having common eigenvectors and for small enough $\ep > 0$, we can find $A_\ep$, $B_\ep \in \sn$ with $\norm{A - A_\ep} \le \ep$, $\norm{B - B_\ep} \le \ep$ such that $A_\ep$, $B_\ep$ have no common eigenvectors.
For this, by perturbing the eigenvalues of $A$ and $B$, without loss of generality, we assume that both $A$ and $B$ have only simple eigenvalues. We let $A_\ep = A$ and next construct $B_\ep$. Consider the spectral decompositions for $A$ and $B$: $A w_i = \lad_i w_i$ and $B v_i = \mu_i v_i$ for $1 \le i\le n$. Then, for a given $\ep > 0$, we choose an orthogonal matrix $U_\ep$ satisfying $U_\ep v_i \neq v_i$ for any $i$ and $\norm{U_\ep - I} \le \ep$, and define the matrix $B_\ep$ by $B_{\ep} U_\ep  V =  U_\ep V  D$, 
where $V = [v_1,\ldots,v_n]$ and $D = {\rm diag}(\mu_1,\ldots,\mu_n)$. When $\ep$ is small enough, the condition $\norm{U_\ep - I} \le \ep$ implies $U_\ep v_ i \neq v_j, w_k$ for all $1 \le  i ,j ,k \le n$. Thus, $A_\ep$ and $B_\ep$ have no common eigenvectors, and the proof is complete. 
\end{proof}

Although an arbitrary pair of symmetric matrices may not have any common eigenvectors, the functional $\mc{L}(\Lad,v)$ can still admit many local minima. Indeed, recalling the decomposition \eqref{eq:pertburab} for almost commuting matrices $(A,B)$, let $E_*$ be a common eigenspace \eqref{defcomm} of $(A_*,B_*)$ associated with some eigenvalue pair $\Lad_* := (\lad_*,\mu_*)$:
\begin{equation}\label{comoninst}
    E_* := \{v \in \R^n\,;\ A_* v = \lad_* v\,,\ B_* v = \mu_* v\}\,.
\end{equation}
We shall prove in Theorem \ref{them:stab} below that when $\norm{[A,B]} \ll 1$, there always exists a local minimizer to $\mc{L}$ in some neighborhood of $(\Lad_*, S_*)$ in $\R^2  \t \mathbb{S}^{n-1}$, where $S_* : = \mathbb{S}^{n-1} \cap E_*$. For convenience, we denote by 
$\mc{P}_*$ and $\mc{P}_*^\perp$ the orthogonal projections to the subspace $E_*$ and its orthogonal complement $E_*^\perp$, respectively. The following lemma is useful in the sequel.

\begin{lemma} \label{lem:projection}
Let the set $S_*$ and the projections $\mc{P}_*$ and $\mc{P}_*^\perp$
be defined as above. For $v \in \mathbb{S}^{n-1}$, it holds that 
\begin{align} \label{eq:claimone}
    \dist(v,S_*)^2: = \min_{w \in S_*} \norm{v - w}^2  = (1 - \norm{\mc{P}_*(v)})^2 + \norm{\mc{P}_*^\perp(v)}^2 = 2 - 2 \norm{\mc{P}_*(v)}
    \,,
\end{align}
where the minimum value is attained at $v_* = \mc{P}_*(v)/\norm{\mc{P}_*(v)}$. Similarly, let $S_{*,\ep}$ be the $\ep$-neighborhood of the set $S_*$: $S_{*,\ep}: = \{v \in \mathbb{S}^{n-1}\,;\ \dist(v,S_*) \le \sqrt{\ep}\}$. Then, there holds, for $v \in \mathbb{S}^{n-1}$, 
\begin{align} \label{eq:claimtwo}
    \dist(v,S_{*,\ep})^2 = \min_{w \in S_{*,\ep}} \norm{v - w}^2 = \norm{v- \mc{P}_{\ep}(v)}^2\,, 
\end{align}
\mb{where $\mc{P}_{\ep}(v)$ is
the unique minimizer to $\dist(v,S_{*,\ep})$ given by}
\begin{align} \label{eq:claimtwo_2}
    \mc{P}_{\ep}(v) = \theta \frac{\mc{P}_*(v)}{\norm{\mc{P}_*(v)}} + \sqrt{1 - \theta^2} \frac{\mc{P}^\perp_*(v)}{\norm{\mc{P}^\perp_*(v)}} \q \text{with} \q   \theta = \max \left\{1 - \frac{\ep}{2}, \norm{\mc{P}_*(v)}\right\}. 
\end{align}
\end{lemma}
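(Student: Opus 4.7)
My plan is to treat the two claims separately, reducing both to elementary sphere geometry via the orthogonal decomposition $\R^n = E_* \oplus E_*^\perp$. For \eqref{eq:claimone}, I would observe that any $w \in S_*$ lies in $E_*$ with $\norm{w} = 1$, so Pythagoras yields
\[
\norm{v - w}^2 = \norm{\mc{P}_*(v) - w}^2 + \norm{\mc{P}_*^\perp(v)}^2.
\]
The standard fact that the closest point on the unit sphere of a subspace to a given nonzero vector $u$ in that subspace is $u/\norm{u}$, with squared distance $(1 - \norm{u})^2$, applied to $u = \mc{P}_*(v)$ (noting $\norm{\mc{P}_*(v)} \leq \norm{v} = 1$) immediately gives \eqref{eq:claimone} along with the claimed minimizer $v_* = \mc{P}_*(v)/\norm{\mc{P}_*(v)}$.

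For \eqref{eq:claimtwo}--\eqref{eq:claimtwo_2}, I would first use part (i) to re-express the defining condition of $S_{*,\ep}$: combining \eqref{eq:claimone} with the identity $\norm{\mc{P}_*(w)}^2 + \norm{\mc{P}_*^\perp(w)}^2 = 1$ for $w \in \S^{n-1}$ yields $\dist(w, S_*)^2 = 2(1 - \norm{\mc{P}_*(w)})$, so
\[
S_{*,\ep} = \bigl\{ w \in \S^{n-1} \,:\, \norm{\mc{P}_*(w)} \geq 1 - \ep/2 \bigr\}.
\]
If $v \in S_{*,\ep}$ already, then $\mc{P}_\ep(v) = v$ trivially minimizes the distance at $0$, which matches \eqref{eq:claimtwo_2} with $\theta = \norm{\mc{P}_*(v)}$. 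Assuming $\norm{\mc{P}_*(v)} < 1 - \ep/2$, I would decompose any competitor as $w = w_\parallel + w_\perp$ with $w_\parallel \in E_*$, $w_\perp \in E_*^\perp$, $\norm{w_\parallel} = a$, $\norm{w_\perp} = b$, $a^2 + b^2 = 1$, and $a \geq 1 - \ep/2$. Orthogonality gives $\norm{v - w}^2 = \norm{\mc{P}_*(v) - w_\parallel}^2 + \norm{\mc{P}_*^\perp(v) - w_\perp}^2$, and for each fixed $(a,b)$ the Cauchy--Schwarz inequality forces the optimal $w_\parallel, w_\perp$ to align radially with $\mc{P}_*(v)$ and $\mc{P}_*^\perp(v)$, collapsing the task to maximizing $pa + qb$ on the arc $\{a^2+b^2=1,\, a \geq 1 - \ep/2,\, b \geq 0\}$, where $p := \norm{\mc{P}_*(v)}$ and $q := \norm{\mc{P}_*^\perp(v)}$.

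Finally, parameterizing $(a,b) = (\cos\alpha, \sin\alpha)$ and $(p,q) = (\cos\beta, \sin\beta)$ converts this into maximizing $\cos(\alpha - \beta)$ over $\alpha \in [0,\, \arccos(1-\ep/2)]$, whose solution is $\alpha = \min\{\beta,\, \arccos(1-\ep/2)\}$; unwinding yields $a = \theta = \max\{1 - \ep/2,\, p\}$ and $b = \sqrt{1-\theta^2}$, which is exactly \eqref{eq:claimtwo_2}. The only subtlety I anticipate is the degenerate case $\mc{P}_*(v) = 0$, in which the first normalization in \eqref{eq:claimtwo_2} is ill-defined and should be interpreted as any unit vector in $E_*$; a short verification confirms that the resulting minimum value $2(1 - \theta p - \sqrt{1-\theta^2}\, q)$ is independent of this choice, so existence and the explicit formula both go through. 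The substantive step is the directional reduction via Cauchy--Schwarz; the rest is bookkeeping.
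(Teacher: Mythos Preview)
Your proposal is correct and follows essentially the same route as the paper: both proofs use the orthogonal splitting $\R^n = E_* \oplus E_*^\perp$, rewrite $S_{*,\ep}$ as $\{\norm{\mc{P}_*(w)} \ge 1-\ep/2\}$, apply the reverse triangle inequality (your Cauchy--Schwarz step) to reduce to a one-variable problem on an arc, and then optimize. The only cosmetic differences are that the paper dispatches \eqref{eq:claimone} in one line and solves the arc problem by a monotonicity argument rather than your trigonometric parameterization; your explicit treatment of the degenerate case $\mc{P}_*(v)=0$ is a small addition the paper omits.
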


\begin{proof}
The first claim \eqref{eq:claimone} directly follows from the definition, so we only show the second one \eqref{eq:claimtwo}. For this, we note from \eqref{eq:claimone}  that $v \in S_{*,\ep}$ is equivalent to $\norm{\mc{P}_*(v)} \ge 1 - \ep/2$. Hence the statements \eqref{eq:claimtwo} and \eqref{eq:claimtwo_2} clearly hold for $v \in S_{*,\ep}$. Then we consider the case where $v \in \mathbb{S}^{n-1}\backslash S_{*,\ep}$, which gives $\theta = 1 - \ep/2$. It is easy to see 
\begin{align}\label{auxeq_proj}
\norm{v - w}^2  = \norm{\mc{P}_*(v - w)}^2 +  \norm{\mc{P}^\perp_*(v - w)}^2 \ge \big|\norm{\mc{P}_*(v)} - \norm{\mc{P}_*(w)} \big|^2 + \big|\norm{\mc{P}^\perp_*(v)} - \norm{\mc{P}^\perp_*(w)}\big|^2. 
\end{align}
By elementary analysis, we have that the function $\big|\norm{\mc{P}_*(v)} - x \big|^2 + \big|\norm{\mc{P}^\perp_*(v)} - \sqrt{1-x^2}\big|^2$ is monotone increasing in $x \in [1-\ep/2,1]$ for $v \in \mathbb{S}^{n-1}\backslash S_{*,\ep}$. Therefore, 
it follows from \eqref{eq:claimtwo_2} and \eqref{auxeq_proj} that 
\begin{equation*}
      \dist(v,S_{*,\ep})^2 \ge \big|\norm{\mc{P}_*(v)} - \theta \big|^2 + \big|\norm{\mc{P}^\perp_*(v)} - \sqrt{1-\theta^2}\big|^2 = \norm{v - \mc{P}_\ep(v)}^2\,.
\end{equation*}
The proof is complete. 
\end{proof}

\begin{remark}
One can also define the distance function $\widetilde{{\rm dist}}(v, E) := \inf_{w \in E} d(v, w)$ for $E \subset \S^{n-1}$, by using the geodesic distance $d(v,w) := \arccos \l v , w \r$
on $\S^{n-1}$, which has been investigated in \cite{ferreira2013projections,ferreira2014concepts}. It is easy to see that $\dist(v, E)^2 = 2 - 2 \cos \widetilde{{\rm dist}}(v, E)$ if $0 \le \widetilde{{\rm dist}}(v, E) \le \pi$, which implies that
 $\dist$ and $\widetilde{{\rm dist}}$ are equivalent, and $\mc{P}_\ep(v)$ in \eqref{eq:claimtwo_2} is also the unique minimizer to
$\widetilde{\dist}(v,S_{*,\ep})$. However, the interested set $S_{*,\ep}$ is not 
geodesically convex by \cite[Proposition 2]{ferreira2014concepts}. Therefore, the results in \cite{ferreira2013projections,ferreira2014concepts} do not directly apply to our case and we choose to focus on the function $\dist(\dd,\dd)$ in \eqref{eq:claimtwo} in this work. 
\end{remark}

\begin{theorem} \label{them:stab}
Suppose that $(A,B)$ are almost commuting matrices with decomposition \eqref{eq:pertburab}, \mb{and denote by $\d(A_*, B_*)$ the eigenvalue gap \eqref{eqgapp} for commuting matrices $(A_*,B_*)$ in \eqref{eq:pertburab}}. Also, let the eigenvalue pair $\Lad_* = (\lad_*,\mu_*)$ with the space $E_*$ and the set $S_*$ be defined as before Lemma \ref{lem:projection}. We assume 
\begin{equation*}
\norm{\Delta A} + \norm{\Delta B} \le \frac{1}{8} \mb{\d(A_*,B_*)},
\end{equation*}
and define constants
\begin{equation} \label{eq:range}
    \eta := \mb{\frac{15}{4}} (\norm{\Delta A} + \norm{\Delta B}) \q \text{and}\q \ep := \frac{128}{3} \left( \frac{\norm{\Delta A} + \norm{\Delta B}}{\mb{\d(A_*,B_*)}} \right)^2\,.
\end{equation}
Then, it holds that \eqref{eq:sub-problem} admits a local minimizer in the domain:
 \begin{equation} \label{eq:local_domain}
    \mb{D_{\eta,\ep} := \big\{(\Lad, v)\in \R^2 \t \S^{n-1}\,;\ \norm{\Lad - \Lad_*} < \eta,\, \dist(v,S_*) < \sqrt{\ep}\big\}\,.}
 \end{equation}
\end{theorem}

\begin{proof}
By Lemma \ref{lem:projection}, we write, for $v \in \mathbb{S}^{n-1}$, 
\begin{align} \label{preeq_1}
v = c_* v_* + \sum_{j = 1}^{m} c_j v_j\,,\q |c_*|^2 + \sum_{j = 1}^{m}|c_j|^2 = 1\,,
\end{align}
and then have 
\begin{align} \label{preeq_2}
    \dist (v, S_*)^2 := |c_* - 1|^2 + \sum_{i = 1}^m |c_i|^2  = 2 -2 c_*\,,
\end{align}
where $c_* = \norm{\mc{P}_*(v)}$, $v_* = \mc{P}_*(v)/c_* \in S_*$, and $\{v_j\}_{j = 1}^m$ are common orthonormal eigenvectors of $(A_*,B_*)$ spanning  $E_*^\perp$. Note that for any $\ep$ and $\eta$, the set $\overline{D_{\eta,\ep}}$ is compact, which implies that  $\inf_{\overline{D_{\eta,\ep}}} \mc{L}(\Lad,v)$ admits a minimizer. We shall prove that for small enough $\norm{\Delta A}+\norm{\Delta B}$, there exists $\ep$ and $\eta$, depending on $\norm{\Delta A} + \norm{\Delta B}$ and satisfying $\ep, \eta \to 0$ as $\norm{\Delta A} + \norm{\Delta B} \to 0$, such that the minimizer of $\inf_{\overline{D_{\eta,\ep}}} \mc{L}(\Lad,v)$ is in the open set $D_{\eta,\ep}$ and hence also a local minimizer of \eqref{eq:sub-problem}. For this, we define a constant
\begin{align} \label{eq:subopt}
   L_0 := & \inf_{v \in S_*} \norm{(A - \lad_*)v}^2 + \norm{(B - \mu_*)v}^2 \notag  \\  = & \inf_{v \in S_*} \norm{\Delta A v}^2 + \norm{\Delta B v}^2 \le \norm{\Delta A}^2 + \norm{\Delta B}^2\,,
\end{align}
and, without loss of generality, assume $L_0 > 0$. Indeed, if $L_0 = 0$ holds with a minimizer $v \in S_*$, then we readily have $\mc{L}(\Lad_*, v) = 0$, and the point $(\Lad_*, v)$ is the desired local minimizer of \eqref{eq:sub-problem} in $D_{\eta,\ep}$.

We next construct constants $\ep$ and $\eta$ such that
\begin{align} \label{eq:claim}
    \inf_{\overline{D_{\eta,\ep}} \backslash D_{\eta,\ep}} \mc{L}(\Lad,v) > \norm{\Delta A}^2 + \norm{\Delta B}^2\,,
\end{align}
which, by \eqref{eq:subopt}, yields 
$  \inf_{\overline{D_{\eta,\ep}} \backslash D_{\eta,\ep}} \mc{L}(\Lad,v) >  L_0$. It follows that $\arg\min _{\overline{D_{\eta,\ep}}} \mc{L}(\Lad,v) \in D_{\eta,\ep}$ 
and hence completes the proof. \mb{We first have $\overline{D_{\eta,\ep}}\backslash D_{\eta,\ep} \subset D_1 \cup D_2$ with 
\begin{align*}
    & D_1 := \big\{(\Lad, v) \in \R^2 \t \S^{n-1}\,;\ 
    \norm{\Lad - \Lad_*} = \eta\,,\ \dist(v,S_*) \le \sqrt{\ep}\big\}\,,\\
 & D_2  :=  \big\{(\Lad, v) \in \R^2 \t \S^{n-1}\,;\ 
    \norm{\Lad - \Lad_*} \le \eta\,,\ \dist(v,S_*) = \sqrt{\ep}\big\}\,.
\end{align*}
A simple estimate implies 
\begin{align} \label{auxineq_dd}
   \sqrt{2 \mc{L}(\Lad,v)} &\ge \norm{(A_* - \lad)v} + \norm{(B_* - \mu)v} - \norm{\De A} - \norm{\De B}\,.
\end{align}
We now estimate $\mc{L}(\Lad,v)$ on $D_1$. By \eqref{preeq_1} and \eqref{preeq_2}, there holds $c_* \ge 1 - \ep/2$ for $(\Lad,v) \in D_1$, which gives 
\begin{align*}
     \norm{(A_* - \lad)v} + \norm{(B_* - \mu)v} \ge |c_*| \norm{\Lad - \Lad_*} \ge \left(1 - \frac{\ep}{2} \right)\eta\,,
\end{align*}
where we have also used $|\lad - \lad_*| + |\mu - \mu_*| \ge \norm{\Lad - \Lad_*}$. 
If we choose $\ep$ and $\eta$ satisfying
\begin{align} \label{auxeq_coeff_1}
\left(1 - \frac{\ep}{2} \right)\eta > (1 + \sqrt{2}) (\norm{\Delta A} +  \norm{\Delta B})\,,
\end{align}  
it follows from \eqref{auxineq_dd} 
that 
\begin{equation} \label{eq:claim11}
    \inf_{D_1} \mc{L}(\Lad,v) > \norm{\De A}^2 + \norm{\De B}^2\,.
\end{equation}
We next estimate $\mc{L}(\Lad,v)$ on $D_2$. Let $\Lad_j = (\lad_j, \mu_j)$ be the eigenvalue pair of $(A_*,B_*)$ associated with the common eigenvector $v_j$. 
Similarly by \eqref{preeq_1}, we can derive 
 \begin{align} \label{addeq1}
     \norm{(A_* - \lad)v} + \norm{(B_* - \mu)v} & \ge \sqrt{\sum_{j = 1}^{m} |c_j (\lad_j - \lad)|^2} + \sqrt{\sum_{j = 1}^{m} |c_j (\mu_j - \mu)|^2} \notag \\
     & \ge \sqrt{\sum_{j = 1}^{m} |c_j|^2 \norm{\Lad_j - \Lad}^2}\,,
\end{align}
thanks to the elementary inequality $\sqrt{x} + \sqrt{y} \ge \sqrt{x + y}$ for $x,y \ge 0$. For $(\Lad,v) \in D_2$ and small enough $\eta$, it is clear that $\sum_{j = 1}^{m} |c_j|^2 = 1 - (1 - \ep/2)^2$ and 
\begin{equation*}
     \norm{\Lad - \Lad_j} \ge \norm{\Lad_j - \Lad_*} - \norm{\Lad - \Lad_*} \ge \d(A_*, B_*) - \eta\,.
\end{equation*}
Then, if follows from \eqref{addeq1} that 
\begin{align*}
      \norm{(A_* - \lad)v} + \norm{(B_* - \mu)v} \ge \sqrt{\sum_{j = 1}^{m} |c_j|^2 \norm{\Lad_j - \Lad}^2} \ge (\d(A_*, B_*) - \eta) \sqrt{1 - \left(1- \frac{\ep}{2}\right)^2}\,.
\end{align*}
If we choose $\ep$ and $\eta$ such that
\begin{align} \label{auxeq_coeff_2}
   (\d(A_*, B_*) - \eta)\sqrt{1- \left(1 - \frac{\ep}{2}\right)^2} > (1 + \sqrt{2}) (\norm{\Delta A} +  \norm{\Delta B})\,,
\end{align}
then recalling \eqref{auxineq_dd}, we have 
\begin{equation} \label{eq:claim22}
\inf_{D_2} \mc{L}(\Lad,v) > \norm{\De A}^2 + \norm{\De B}^2\,.    
\end{equation}
One can check that in the regime $\norm{\Delta A} + \norm{\Delta B} \le \d(A_*, B_*)/8$, the choice \eqref{eq:range} of parameters $(\ep,\eta)$ satisfies the conditions
\eqref{auxeq_coeff_1} and \eqref{auxeq_coeff_2}. Then, the claim \eqref{eq:claim} follows from \eqref{eq:claim11} and \eqref{eq:claim22} as desired.}
\end{proof}

\begin{remark}
\mb{The prefactors in Theorem \ref{them:stab} (e.g., $\frac{1}{8}$ and $\frac{15}{4}$) are chosen for convenience and can be improved, but the orders in $\norm{\Delta A} + \norm{\Delta B}$ and $\delta(A_*, B_*)$ are optimal by the standard perturbation theory.}  
\end{remark}

\begin{remark}
\mb{In the extreme case where $\si(A_*,B_*)$ is a 
singleton, equivalently, $(A_*, B_*) = (a I, b I)$ for some $a, b \in \R$, we have $S_* = \S^{n-1}$ and $\d(A_*, B_*)$  can be set as $+ \infty$ by convention (note that \eqref{eqgapp} is not well-defined in this case). Then, Theorem \ref{them:stab} is a trivial fact that \eqref{eq:sub-problem} has a local minimum $(\Lad, v) \in \R^2 \t \S^{n - 1}$ with $\Lad$ near the point $(a,b) \in \R^2$. Moreover, we can see that in this case,  \eqref{eq:sub-problem} reduces to $\min_{(\Lad,v) \in \R^2 \t \S^{n-1}} \norm{(\Delta A - \lad) v}^2 + \norm{(\Delta B - \mu) v}^2$ for $\Delta A, \Delta B \in \sn$, which is hard to analyze due to its generality (except $n = 2$, in which case it is essentially a one-dimensional optimization problem; see also Lemma \ref{prop:ob1} below). It is possible to carry out a landscape analysis as in \cite{zhang2022geometric}, but one may have to consider special $\Delta A$ and $\Delta B$ such as rank-one and diagonal matrices.}
\end{remark}

We next discuss generic properties of local minimizers of \eqref{eq:sub-problem} for almost commuting matrices. \mb{Given $(A,B) \in {\rm F}_{\rm com}$, let $\{\Lad_l\}_{l = 1}^L$ ($L \le n$) be its distinct eigenvalue pairs and $\{E^{\Lad_l}_{A,B}\}_{l = 1}^L$ be the associated common eigenspaces \eqref{defcomm}.} Theorem \ref{them:stab} guarantees that \eqref{eq:sub-problem} for the perturbed pair $(A + \Delta A ,B + \Delta B)$ with $\norm{\Delta A} + \norm{\Delta B} \ll 1$ admits a local minimizer in the neighborhood of each closed set $(\Lad_l, S_l)$, where $S_l: = E_{A,B}^{\Lad_l} \cap \mathbb{S}^{n-1}$. Thus, if $L = n$ holds, equivalently, all of the common eigenspaces are one-dimensional, we readily have that $\mc{L}(\Lad,v)$ has at least $n$ local minima, and the associated minimizing vectors $v_j$ are almost orthogonal to each other in the sense that $|\l v_i, v_j\r| \ll 1$ for $i \neq j$. 
We shall see that generically, it is indeed the case. Note that the set of commuting matrices ${\rm F}_{\rm com} \subset \sn \t \sn$ is a complete metric space.

\begin{lemma} \label{lem:open_dense_common}
The subset ${\rm F}_0: = \{ (A,B) \in {\rm F}_{\rm com}\,;\  \dim(E_{A,B}^{\Lad_l}) = 1\,, \ \Lad_l \in \si(A,B)\}$ is open dense in ${\rm F}_{\rm com}$. 
\end{lemma}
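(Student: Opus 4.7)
The plan is to exploit the structural fact that since $A,B \in {\rm F}$ commute and are symmetric, they are simultaneously diagonalizable by an orthogonal matrix, so each $(A,B) \in {\rm F}$ can be identified with a joint spectrum $\{(\lad_i,\mu_i)\}_{i=1}^n \subset \R^2$ together with a common orthonormal eigenbasis. The condition $(A,B) \in {\rm F}_0$ is then equivalent to the requirement that these $n$ pairs be pairwise distinct, since each common eigenspace $E_{A,B}^l$ is exactly the span of those basis vectors whose joint eigenvalue pair equals a given value in $\R^2$.

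For density, I would take any $(A,B) \in {\rm F}\setminus {\rm F}_0$ and write $A = U \Lad_A U^T$, $B = U \Lad_B U^T$ with $U \in {\rm O}(n)$ and $\Lad_A, \Lad_B \in {\rm diag}(n)$. Since some joint pairs coincide, perturb the diagonal entries of $\Lad_A$ and $\Lad_B$ by arbitrarily small and suitably chosen increments so that the resulting $n$ pairs $(\w\lad_i,\w\mu_i)$ become pairwise distinct, and then conjugate back by $U$. The perturbed pair remains commuting (both are still diagonal in the basis $U$), is arbitrarily close to $(A,B)$ in $\sn \t \sn$, and lies in ${\rm F}_0$.

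For openness, the subtler direction, I would use the standard trick of introducing the linear combination $C_t := t A + (1-t) B$. Since the joint pairs of $(A,B) \in {\rm F}_0$ are pairwise distinct, the map $t \mapsto (t \lad_i + (1-t)\mu_i)_{i=1}^n$ has all coordinates distinct except for a finite set of $t$'s; pick such a $t$ so that $C_t$ has simple spectrum $\d(C_t) > 0$. Now let $(A',B') \in {\rm F}$ be close to $(A,B)$; then $C'_t := tA' + (1-t)B'$ is close to $C_t$ and, by the classical perturbation theory of simple eigenvalues of symmetric matrices, still has simple spectrum with eigenvalues and one-dimensional eigenspaces close to those of $C_t$. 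Because $A'$ and $B'$ both commute with $C'_t$ and $C'_t$ has simple spectrum, the eigenbasis of $C'_t$ simultaneously diagonalizes $A'$ and $B'$, and the resulting joint pairs $(\lad'_i,\mu'_i)$ are close to the original $(\lad_i,\mu_i)$ and hence still pairwise distinct, giving $(A',B') \in {\rm F}_0$.

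The main obstacle I anticipate is in the openness step: one cannot directly apply eigenvalue perturbation to $A'$ and $B'$ separately, because $A$ itself may have repeated eigenvalues even when the joint pairs are distinct, so the eigenvectors of $A$ alone are not stable under perturbation. The $C_t$-trick circumvents this by transferring the problem to a single symmetric matrix with simple spectrum, where classical perturbation theory applies cleanly; the commutativity constraint on ${\rm F}$ then forces the stable eigenvectors of $C'_t$ to serve as a common eigenbasis for $(A',B')$, thereby controlling the joint spectrum.
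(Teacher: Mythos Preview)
Your argument is correct. For density, the paper is marginally more economical: rather than perturbing both diagonals to separate the joint pairs, it perturbs only the eigenvalues of $A$ so that $\widetilde A$ has simple spectrum; since every common eigenspace $E_{\widetilde A,B}^l$ is contained in an eigenspace of $\widetilde A$, one-dimensionality is automatic. Your version is equally valid, just slightly more work.

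For openness your route differs more substantively from the paper's, and in fact is the more careful of the two. The paper disposes of openness in one line by invoking the perturbation lemma for a single symmetric matrix together with the definition of $E_{A,B}^l$, without addressing the very issue you flag---that $A$ or $B$ alone may have repeated eigenvalues even when the joint pairs are distinct, so eigenvector stability for $A$ or $B$ individually is not available. Your $C_t$-trick resolves this cleanly: distinctness of the joint pairs guarantees that $t\mapsto t\lambda_i+(1-t)\mu_i$ has all coordinates distinct away from finitely many $t$, so a generic $C_t$ has simple spectrum; then classical perturbation gives a stable eigenbasis for $C'_t$, and commutativity of $(A',B')$ with $C'_t$ forces that basis to diagonalize both. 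This reduction to a single simple-spectrum matrix is a standard device and buys you a rigorous proof where the paper is content with a pointer.
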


\begin{proof}
To show that ${\rm F}_0$ is dense in ${\rm F}_{\rm com}$, it suffices to note that for $(A,B) \in {\rm F}_{\rm com}$, we can always perturb the eigenvalues of $A$ such that the perturbed matrix $\w{A}$ has only simple eigenvalues, which implies $(\w{A},B) \in {\rm F}_0$, while the claim that ${\rm F}_0$ is open in ${\rm F}_{\rm com}$ follows from the standard perturbation result (cf.\,Lemma \ref{lemma:perturbation}). 
\end{proof}

The above lemma is an analog of Lemma \ref{lem:opendensenocommon}, which gives the genericity of $\dim(E_{A,B}^{\Lad_l}) = 1$ in the topological sense. 
It would also be interesting to interpret this property from a probabilistic perspective. We next define a probability measure on the set ${\rm F}_{\rm com}$ of commuting matrices and show that the event ${\rm F}_0$ happens almost surely. Note that ${\rm F}_{\rm com}$ can be parameterized as
\begin{align*} 
    \Phi: (D_1,D_2,U) \in  {\rm diag}_{\le}(n) \t {\rm diag}_{\le}(n) \t \on \longrightarrow (U D_1 U^T, U D_2 U^T) \in {\rm F}_{\rm com}. 
\end{align*}
Our probabilistic model is largely motivated by GOE in random matrix theory. We recall that $X$ is a GOE if diagonal elements $X_{ii} \sim N(0, 1)$ for $1 \le i \le n$ are i.i.d. Gaussian random variables and off-diagonal elements $\{X_{ij}\}_{1 \le i < j \le n}$ are i.i.d. Gaussian $N(0,\frac{1}{2})$ that are independent of $X_{ii}$. We let $U$ be uniformly distributed on ${\rm O}(n)$ and $D_i \in {\rm diag}_{\le}(n)$, $i =1,2$, be random diagonal matrices with diagonal entries independently sampled from the following probability density on the set $\{x_1\le x_2 \le \cdots \le x_n\}$:
\begin{align*}
\vp(x_1,\cdots,x_n) = c_n e^{-\frac{1}{2}(x_1^2 + \cdots + x_n^2)} \prod_{1\le i < j \le n} (x_j - x_i)\,,
\end{align*}
where $c_n$ is the normalization constant. Equivalently, we define the product probability measure: 
\begin{align*}
    \mathbb{P} = \vp(\lad_1,\cdots,\lad_n) d\lad_1 \cdots d \lad_n \t \vp(\mu_1,\cdots,\mu_n) d \mu_1 \cdots d \mu_n \t \mu_{\rm Haar}\,,
\end{align*}
on ${\rm diag}_{\le}(n) \t {\rm diag}_{\le}(n) \t \on$, where $\mu_{\rm Haar}$ denotes the Haar measure on the orthogonal group $\on$. Clearly, the map $\Phi$ is smooth and thus the pushforward measure $\mathbb{P}_{{\rm F}_{\rm com}}: = \mathbb{P} \circ \Phi^{-1}$ on ${\rm F}_{\rm com}$ is well-defined. In what follows, we limit our discussion to the case where ${\rm F}_{\rm com}$ is equipped with the probability $\mathbb{P}_{{\rm F}_{\rm com}}$. One may have noted that $\vp$ is nothing else than the joint distribution of eigenvalues of GOE \cite{anderson2010introduction}. Thus, the pair $(A,B)\in {\rm F}_{\rm com}$ has 
the same marginal distribution as GOE, i.e., $A \sim {\rm GOE}$, $B \sim {\rm GOE}$. To proceed, we recall a basic result that may be due to Wigner and von Neumann \cite{neumann1993verhalten}; see also \cite{tao2012topics,teytel1999rare}. 

\begin{lemma} \label{lem:simple}
The set of symmetric matrices with degenerate eigenvalues is a surface of codimension $2$ in the $n(n-1)/2$-dimensional real vector space $\sn$. Thus, given an absolutely continuous  probability on $\sn$ with respect to Lebesgue measure, a matrix $X \in \sn$
 has all simple eigenvalues almost surely.  
\end{lemma}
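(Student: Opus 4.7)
The plan is to stratify ${\rm S}_{\rm deg}(n)$ according to the multiplicity pattern of eigenvalues, show that the top (lowest codimension) stratum is a smooth submanifold of codimension $2$, argue that all other strata have strictly higher codimension, and then conclude by a null-set argument.

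First, let ${\rm S}_{{\rm deg},2}(n)$ denote the subset of $\sn$ consisting of matrices having exactly one eigenvalue of multiplicity $2$ with all other eigenvalues simple. Fix $A_0 \in {\rm S}_{{\rm deg},2}(n)$ with double eigenvalue $\lad_0$ and associated $2$-dimensional eigenspace $E_0$. Standard analytic perturbation theory (Rellich--Kato) guarantees that for $A$ in a small neighborhood $U_0$ of $A_0$, there is a smoothly varying $2$-dimensional invariant subspace $E(A)$ near $E_0$; let $P(A)$ be the orthogonal projector onto $E(A)$, chosen to depend smoothly on $A$. Picking a smoothly varying orthonormal basis of $E(A)$ (e.g.\ by Gram--Schmidt from a fixed basis of $E_0$) yields a smooth map
\[
    \Psi : U_0 \longrightarrow \mathrm{Sym}(2)\,, \qquad \Psi(A) = \begin{pmatrix} a(A) & b(A) \\ b(A) & c(A) \end{pmatrix},
\]
namely the $2\t 2$ matrix representing $P(A) A P(A)$ in the chosen basis. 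By construction, $A$ has a degenerate eigenvalue near $\lad_0$ if and only if $\Psi(A)$ has a double eigenvalue, which occurs if and only if $a(A) = c(A)$ and $b(A) = 0$.

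The next step is to verify that the two functions $a - c$ and $b$ have linearly independent differentials at $A_0$, so that $\{A \in U_0 : a(A) = c(A),\, b(A) = 0\}$ is a smooth submanifold of $U_0$ of codimension $2$. This is a routine calculation: choosing a basis of $\sn$ adapted to the block decomposition $\R^n = E_0 \oplus E_0^\perp$, the perturbations supported on the $E_0$-block are mapped by $\Psi$ (at first order) linearly and surjectively onto $\mathrm{Sym}(2)$, so the differentials of $a - c$ and of $b$ at $A_0$ are independent. This shows ${\rm S}_{{\rm deg},2}(n)$ is locally a codimension-$2$ smooth submanifold of $\sn$; covering by countably many such charts, it is a codimension-$2$ submanifold globally and in particular has $(n(n+1)/2)$-dimensional Lebesgue measure zero.

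The remaining strata ${\rm S}_{\rm deg}(n) \setminus {\rm S}_{{\rm deg},2}(n)$ correspond to higher-order degeneracies (one eigenvalue of multiplicity $k \ge 3$, or two or more distinct double eigenvalues, etc.). Each such stratum can be analyzed by the same block-reduction: a multiplicity-$k$ eigenvalue forces the restriction of $A$ to a $k$-dimensional invariant subspace to be a scalar multiple of the identity, which is $\dim \mathrm{Sym}(k) - 1 = k(k+1)/2 - 1 \ge 2$ independent conditions; and two independent multiplicity-$2$ degeneracies impose $4$ conditions. Hence each such stratum is locally a smooth submanifold of $\sn$ of codimension strictly greater than $2$, and in particular has Lebesgue measure zero. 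As a finite union of Lebesgue null sets, ${\rm S}_{\rm deg}(n)$ is Lebesgue null, so any probability measure absolutely continuous with respect to Lebesgue measure assigns it mass zero, which is precisely the second assertion.

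The main technical obstacle is the surjectivity of the differential of $\Psi$ restricted to the $E_0$-block, i.e.\ verifying that the conditions $a = c$ and $b = 0$ are genuinely independent; this is what pins down the codimension as exactly $2$ rather than merely $\le 2$. Once this is in hand, the stratification and the null-set conclusion are standard.
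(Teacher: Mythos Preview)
Your proof plan is correct and follows the classical von Neumann--Wigner argument. However, note that the paper does \emph{not} actually prove this lemma: it is stated as a known result with references to \cite{neumann1993verhalten,avron1978analytic,tao2012topics,teytel1999rare}, and is then immediately applied. So there is no ``paper's own proof'' to compare against; you have supplied what the paper omits.

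A few minor remarks. First, the lemma as stated in the paper contains a typo: the dimension of $\sn$ is $n(n+1)/2$, not $n(n-1)/2$; you correctly use $n(n+1)/2$ in your argument. Second, your handling of the higher-codimension strata is fine but could be tightened slightly: strictly speaking, one should also argue that these strata are countable unions of smooth submanifolds (or semialgebraic sets) to conclude they are Lebesgue null, rather than just exhibit the ``expected'' codimension from a local constraint count. This is routine (each stratum is the image of a smooth map from a lower-dimensional manifold, or one invokes the semialgebraic structure of the multiplicity strata), but worth mentioning if you want the proof to be airtight. Third, the phrase ``surface of codimension $2$'' in the lemma is informal: ${\rm S}_{\rm deg}(n)$ is not itself a manifold but a stratified set whose top stratum has codimension $2$, which is exactly what your stratification makes precise.
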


We immediately observe from Lemma \ref{lem:simple} that 
\begin{align*}
    \mathbb{P}_{{\rm F}_{\rm com}} ({\rm F}_{\rm com} \backslash {\rm F}_0) & =  \mathbb{P}_{{\rm F}_{\rm com}}(\Lad \in \si(A,B) \ \text{is degenerate})  \\
    & \le \mathbb{P}_{{\rm GOE}}(X\in \sn\ \text{has a degenerate eigenvalue}) = 0\,,
\end{align*}
and the following corollary holds. 

\begin{corollary} \label{lem:high_prob_common}
For a pair of random commuting matrices drawn from the probability $\mathbb{P}_{{\rm F}_{\rm com}}$, 
almost surely we have that all its common eigenspaces are one-dimensional. 
\end{corollary}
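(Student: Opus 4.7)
The plan is to reduce the event $\{(A,B) \in {\rm F}\setminus {\rm F}_0\}$ to the much simpler event $\{A \text{ has a repeated eigenvalue}\}$, and then exploit the fact that the marginal distribution of $A$ under $\mathbb{P}_{\rm F}$ coincides with GOE, so that the preceding lemma on ${\rm S}_{\rm deg}(n)$ applies directly.

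First I would argue the set-theoretic inclusion. If $A$ has only simple eigenvalues, then every eigenspace of $A$ is one-dimensional. Since for each pair $(\lad_i,\mu_j) \in \si(A) \t \si(B)$ the common eigenspace $E_{A,B}^{\lad_i,\mu_j}$ is, by definition \eqref{def:common_eig}, a subspace of the $\lad_i$-eigenspace of $A$, it must itself be at most one-dimensional; when it is nonempty (i.e., contains a nonzero vector), it is exactly one-dimensional. Consequently every such pair $(A,B) \in {\rm F}$ lies in ${\rm F}_0$, so
\begin{equation*}
    {\rm F}\setminus {\rm F}_0 \subset \{(A,B) \in {\rm F}\,;\ A\ \text{has a degenerate eigenvalue}\}\,.
\end{equation*}

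Next I would identify the marginal law of $A$ under $\mathbb{P}_{\rm F} = \mathbb{P}\circ \Phi^{-1}$. By construction $A = U \Lad_1 U^T$ with $U$ Haar-distributed on $\on$ and $\Lad_1$ independent of $U$, with ordered diagonal entries sampled from the density $\vp$; since $\vp$ is exactly the joint density of the ordered eigenvalues of a GOE matrix, and conjugation by an independent Haar orthogonal frame reproduces the GOE law, one concludes $A \sim {\rm GOE}$. Applying the preceding lemma, which says that ${\rm S}_{\rm deg}(n)$ is a real-analytic subvariety of codimension $2$ in $\sn$ and is therefore a null set for any probability with density with respect to Lebesgue measure on $\sn$, one obtains $\mathbb{P}_{\rm GOE}(A\ \text{has a degenerate eigenvalue}) = 0$.

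Combining these observations in the chain
\begin{equation*}
    \mathbb{P}_{\rm F}({\rm F}\setminus {\rm F}_0) \le \mathbb{P}_{\rm F}(A\ \text{has a degenerate eigenvalue}) = \mathbb{P}_{\rm GOE}(A\ \text{has a degenerate eigenvalue}) = 0
\end{equation*}
finishes the proof. There is no substantial obstacle here; the only delicate point is checking that the pushforward of the product measure on ${\rm diag}_{\le}(n)\t{\rm diag}_{\le}(n)\t \on$ through $\Phi$ really does give the GOE law in the $A$-marginal. Once that identification is recorded, the corollary is an immediate consequence of the Wigner--von Neumann codimension-$2$ lemma and the set inclusion above.
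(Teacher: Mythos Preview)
Your proposal is correct and follows essentially the same approach as the paper. The paper presents the corollary as an immediate observation, writing precisely the chain
\[
    \mathbb{P}_{{\rm F}} ({\rm F} \backslash {\rm F}_0) \le  \mathbb{P}_{{\rm F}}(A\ \text{has a degenerate eigenvalue})  = \mathbb{P}_{{\rm GOE}}(A\ \text{has a degenerate eigenvalue}) = 0\,,
\]
and you have simply made explicit the two ingredients behind it: the set inclusion ${\rm F}\setminus{\rm F}_0 \subset \{A\ \text{has a degenerate eigenvalue}\}$ (since simple eigenvalues of $A$ force all common eigenspaces to be one-dimensional) and the identification of the $A$-marginal with GOE.
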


In order to combine these observations with Theorem \ref{them:stab}, we recall from \cite[Corollary 2.2]{nguyen2017random} a tail bound of the spectral gap for Wigner matrices (i.e., off-diagonal entries are i.i.d. sub-Gaussian random variables with mean $0$ and variance $1$ and diagonal entries are
independent sub-Gaussians with mean $0$ and variances bounded by $n^{1-o(1)}$), which include GOE as a subclass. 

\begin{lemma} \label{lem:gapwigner}
For real symmetric Wigner matrices $X_n$ with sub-Gaussian entries, there holds 
\begin{align*}
    \min_{1 \le i \le n -1}(\lad_{i+1} - \lad_i) \ge n^{-\frac{3}{2} - o(1)}\,,
\end{align*}
with probability $1 - o(1)$, where $\{\lad_i\}_{i = 1}^n$ are eigenvalues of $X_n$ (counting multiplicity) in an increasing order. 
\end{lemma}

Applying Lemma \ref{lem:gapwigner} to GOE, by Theorem \ref{them:stab}, we can conclude the following result, which shows the desired claim that generically $\mc{L}(\Lad,v)$ has almost orthogonal local minimizing vectors $v_i$.

\begin{theorem} \label{thm:generic_almost}
Let $A,B \in \sn$ be almost commuting matrices with decomposition \eqref{eq:pertburab}, where $(A_*,B_*) \in {\rm F}_{\rm com}$ is sampled from $\mathbb{P}_{{\rm F}_{\rm com}}$. Then, for $\norm{\Delta A} + \norm{\Delta B} \le n^{-3/2 - o(1)}/8$, with probability $1 - o(1)$, the function $\mc{L}(\Lad,v)$ admits $n$ local minima on $\R^2 \t \mathbb{S}^{n-1}$ with associated minimizing vectors $\{v_j\}_{j = 1}^n$ satisfying
\begin{align*}
    |\l v_i, v_j\r| = O\left( \frac{\norm{\Delta A} + \norm{\De B}}{n^{-3/2-o(1)}}\right). 
\end{align*}
\end{theorem}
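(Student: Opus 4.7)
The plan is to combine three ingredients already at hand: the quantitative stability result of Theorem~\ref{them:stab}, the almost-sure one-dimensionality of common eigenspaces under $\mathbb{P}_{{\rm F}}$ (Corollary~\ref{lem:high_prob_common}), and the Wigner-type minimum-gap bound applied to the GOE marginals of $(A_*,B_*)$. First, I would introduce the ``good event'' $\mc{E}$ on which (a) every common eigenspace $E_{A_*,B_*}^l$ is one-dimensional, so $L=n$ and the corresponding unit common eigenvectors $\{v_*^l\}_{l=1}^n$ form an orthonormal basis of $\R^n$, and (b) $\min\{\d(A_*),\d(B_*)\} \ge n^{-3/2-o(1)}$. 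Event (a) has full probability by Corollary~\ref{lem:high_prob_common}, while event (b) holds with probability $1-o(1)$ by applying the spectral-gap lemma to each of the two GOE marginals of $(A_*,B_*)$ and a union bound. Hence $\P(\mc{E}) = 1-o(1)$.

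Second, on $\mc{E}$ the hypothesis $\norm{\De A}+\norm{\De B} \le n^{-3/2-o(1)}/8$ entails $\norm{\De A}+\norm{\De B} \le \min\{\d(A_*),\d(B_*)\}/8$, so Theorem~\ref{them:stab} applies separately to each common eigenspace $E_{A_*,B_*}^l$. For every $l \in \{1,\ldots,n\}$ the functional $\mc{L}$ then admits a local minimizer $(\lad_l,\mu_l,v_l)$ with $v_l$ in the open $\sqrt{\ep}$-neighborhood of $S_l = \{\pm v_*^l\}$, where
\[
  \ep = \frac{128}{3}\left(\frac{\norm{\De A}+\norm{\De B}}{\min\{\d(A_*),\d(B_*)\}}\right)^2 = O\!\left(\left(\frac{\norm{\De A}+\norm{\De B}}{n^{-3/2-o(1)}}\right)^2\right).
\]

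Third, I would verify that these $n$ local minimizers are genuinely distinct and extract the almost-orthogonality estimate. Since the $v_*^l$ are pairwise orthogonal, the $\sqrt{\ep}$-neighborhoods of the sets $S_l$ are pairwise disjoint as soon as $\sqrt{\ep}< \sqrt{2}/2$, which is ensured by $\norm{\De A}+\norm{\De B} \ll n^{-3/2-o(1)}$; thus the local minimizers produced in distinct domains $D_l$ are distinct. For $i\ne j$, choose $w_l\in S_l$ (so $w_l=\pm v_*^l$) with $\norm{v_l - w_l}\le \sqrt{\ep}$. Using $\l w_i, w_j\r = 0$, a direct expansion gives
\[
  |\l v_i, v_j\r| \le \norm{v_i - w_i}\,\norm{v_j-w_j} + \norm{v_i - w_i} + \norm{v_j - w_j} \le \ep + 2\sqrt{\ep} = O(\sqrt{\ep}),
\]
and substituting the formula for $\ep$ yields exactly the stated bound.

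The main obstacle is the probabilistic bookkeeping: checking that the ``good event'' $\mc{E}$ really has probability $1-o(1)$ and that on $\mc{E}$ the smallness hypothesis of Theorem~\ref{them:stab} holds uniformly for all $n$ common eigenspaces so that the neighborhoods $D_l$ can be formed simultaneously. Once $\mc{E}$ is in force, the remainder is deterministic: $n$ parallel applications of Theorem~\ref{them:stab} plus the elementary bilinear expansion of $\l v_i, v_j\r$ against the orthonormal basis $\{v_*^l\}$. A minor subtlety is that $S_l$ is the two-point set $\{\pm v_*^l\}$ rather than a single point, but this sign ambiguity in $w_l$ is harmless since only $|\l v_i, v_j\r|$ appears in the final estimate.
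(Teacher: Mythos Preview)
Your proposal is correct and follows essentially the same route the paper itself indicates: the paper does not give a separate proof of Theorem~\ref{thm:generic_almost} but states that it follows ``from the above discussion'' by combining Theorem~\ref{them:stab}, Corollary~\ref{lem:high_prob_common}, and the spectral-gap lemma applied to the GOE marginals. You have carefully spelled out exactly that combination, including the disjointness of the neighborhoods $D_l$ and the bilinear expansion yielding $|\langle v_i,v_j\rangle| = O(\sqrt{\ep})$, which the paper leaves entirely implicit.
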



\section{Vector-wise joint diagonalization algorithm}\label{sec:algorithm}

In this section, we will provide a detailed description of the VJD algorithm for solving \eqref{eq:problem}. Following the ideas presented in the introduction, we first summarize its key steps in Algorithm \ref{alg:VJD} below. 

\RestyleAlgo{ruled}
\begin{algorithm}[!htbp]
\caption{VJD algorithm for \eqref{eq:problem}}\label{alg:VJD}
\KwIn{almost commuting matrices $A,B \in \sn$}
\KwOut{orthogonal matrix $U \in \on$; diagonal matrices $D_1, D_2 \in \diag{n}$}

set $(\lad^{(1)},\mu^{(1)},v^{(1)})$ to the minimizer of \eqref{eq:sub-problem}

$V = v^{(1)}$

$w^{(1)} = v^{(1)}$

\For{$j=2,\cdots,n$}{

set $(\lad^{(j)},\mu^{(j)},v^{(j)})$ to the minimizer of  \eqref{eq:problem with inequality constraint} with inputs $\{w^{(i)}\}_{ i = 1}^{j-1}$

$V = [V, v^{(j)}]$

compute $w^{(j)}$ by orthogonalizing $\{w^{(1)},\ldots,w^{(j-1)}, v^{(j)}\}$ via Gram-Schmidt process 

}

compute the SVD: $V = U_1 \Sigma U_2$ for solving \eqref{eq:orthogonalize} 

$U = U_1 U_2$, $D_1 = {\rm diag}(\lad^{(1)},\ldots,\lad^{(n)})$, $D_2 = {\rm diag}(\mu^{(1)},\ldots,\mu^{(n)})$

\end{algorithm}
\vspace{- 2 mm}

\begin{remark}
\mb{The introduction of $w^{(j)}$ is conceptually unnecessary and mainly for implementing the projection $\mc{P}_\ep$ \eqref{eq:claimtwo_2} associated with ${\rm span}\{v^{(1)},\ldots, v^{(j)}\}$, which is needed for solving \eqref{eq:problem with inequality constraint}; see Algorithm \ref{alg:for ineq}.}
\end{remark}

The rest of this section is devoted to designing algorithms for the optimization problems \eqref{eq:sub-problem} and \eqref{eq:problem with inequality constraint}. Note that 
the cost functional $\mc{L}(\Lad,v)$ is quadratic and convex with respect to both variables $\Lad \in \R^2$ and $v \in \R^n$, which motivates us to consider the alternating descent framework, namely, successively updating the variables $\Lad$
and $v$ to minimize the cost $\mc{L}$. In particular, we have
\begin{align} \label{eq:update of lambda and mu}
     \arg\min_{\Lad \in \R^2} \mc{L}(\Lad,v) = (\l v, A v \r, \l v, B v \r)\,,
\end{align}
and there holds, \mb{for any $v \in \S^{n-1}$ and $\Lad = (\lad,\mu) \in \R^2$,  
\begin{align} \label{eq:wlv}
\w{\mc{L}}(v) \le \mc{L}(\Lad,v)\,,
\end{align}
with 
\begin{equation} \label{eq:wlv0}
   \w{\mc{L}}(v): = \mc{L}(\l v, A v \r,\l v, B v \r,v) = \l v, (A^2 + B^2) v\r - \l v, A v \r^2 - \l v, B v \r^2\,,
\end{equation}
which is a fourth-order polynomial in $v$.} Thus, we can update $\Lad$ explicitly via \eqref{eq:update of lambda and mu}. To update the vector $v \in \S^{n-1}$, we consider Riemannian 
Newton's method with a line search, which takes advantage of the Hessian information of $\mc{L}$ and enables a fast convergence. \mb{More precisely, in the $k$-th step, given $\Lad_k = (\l v_k, A v_k\r, \l v_k, B v_k\r)$ and $v_k \in \S^{n-1}$, we compute the search direction $s_k \in T_{v_k} \S^{n-1}$ by 
\begin{equation} \label{eq:Riemann Newton equation}
  - \hess_v{\mathcal{L}(\Lad_k, v_k)}[s_k] = \grad_v {\mathcal{L}(\Lad_k, v_k)}\,,
\end{equation}
and then update $v_{k+1} = R_{v_k}(\alpha_k s_k)$ and $\Lad_{k + 1} = (\l v_{k+1}, A v_{k+1}\r, \l v_{k+1}, B v_{k+1}\r)$, where the stepsize $\alpha_k$ is determined by a backtracking line search (cf.\,\eqref{eq:new_line_search} below).} However, per our numerical experiments, such an alternating minimization may converge slowly and become less efficient when the matrix size $n$ increases. We also note that $\hess_v \mc{L}$ may have negative eigenvalues so that $s_k$ in \eqref{eq:Riemann Newton equation} is not necessarily a descent direction of $\mc{L}(\Lad,\dd)$, which leads to the potential instability of the algorithm. Our practically efficient algorithm is motivated by the following lemmas.

\begin{lemma} \label{prop:ob1}
Given $A,B \in \sn$, let functions $\mc{L}(\Lad,v)$ on $\R^2 \t \S^{n-1}$ and $\w{\mc{L}}(v)$ on $\S^{n-1}$ be defined in \eqref{eq:sub-problem} and \eqref{eq:wlv0}, respectively. Then, $(\Lad_*, v_*)$ is  a local minimum of $\mc{L}$ if and only if 
$\Lad_* = (\l v_*, A v_*\r, \l v_*, B v_*\r)$ and $v_*$ is a local minimum of $\w{\mc{L}}$. Moreover, we have  
\begin{align} \label{eq:grad_rela}
    \grad\w{\mc{L}}(v) = \mb{\grad_v \mc{L}(\Lad,v)|_{(\Lad,v) = (\l v, A v \r, \l v, B v \r, v)}}\,,
\end{align}
and $\hess \w{\mc{L}}(v)$ is the Schur complement of the block $2 I_2$ of the Hessian $\hess \mc{L}$ \eqref{eq:hessl}, i.e.,
\begin{align} \label{eq:hess_rela}
    \hess \w{\mc{L}}(v) = \mb{\hess_v \mc{L}(\Lad,v)|_{(\Lad,v) = (\l v, A v \r, \l v, B v \r, v)}} - 2^{-1} W W^T\,.
\end{align}
\end{lemma}

\begin{proof}
The Riemannian gradient \eqref{eq:grad_rela} and Hessian \eqref{eq:hess_rela} of $\w{\mc{L}}(v)$ follow from a direct computation by \eqref{eq:riegrad}. For the first claim, let $(\Lad_*, v_*)$ be a local minimum of $\mc{L}$. Then, $\Lad_*$ is a local minimum of the quadratic function $\mc{L}(\dd,v_*)$, which gives
$\Lad_* = (\l v_*, A v_*\r, \l v_*, B v_*\r)$. By \eqref{eq:wlv}, we have, for $(\Lad,v)$ near $(\Lad_*,v_*)$,
\begin{align} \label{auxeq}
    \w{\mc{L}}(v_*) \le \w{\mc{L}}(v) \le \mc{L}(\Lad,v)\,,
\end{align}
i.e., $v_*$ is a local minimum of $\w{\mc{L}}(v)$. The other direction can be proved similarly. 
\end{proof}

Thanks to Lemma \ref{prop:ob1}, optimizing the function $\mc{L}(\Lad,v)$ is equivalent to optimizing the one $\w{\mc{L}}(v)$, since they have exactly the same local minima in the variable $v \in \S^{n-1}$. Noting from \eqref{eq:grad_rela} that 
$\grad_v \mc{L}(\Lad_k, v_k) = \grad \w{\mc{L}}(v_k)$, we will show that in the almost commuting regime $[A,B] \ll 1$, the alternating descent algorithm given above can be regarded as a Riemannian approximate Newton method for solving 
\begin{align} \label{eq:subeq}
    \min_{v \in \S^{n-1}}  \w{\mc{L}}(v) = \l v, (A^2 + B^2) v\r - \l v, A v \r^2 - \l v, B v \r^2\,,
\end{align}
namely, a special case of Algorithm \ref{alg:for ineq} below with $H(v) = \hess_v \mc{L}(\l v, A v\r, \l v, B v\r, v)$. In this work, the approximate Newton method refers to a Newton-type algorithm that employs structurally simple approximate Hessian matrices with easily computed inverses. 

\begin{lemma} \label{prop:ob2}
Given a pair of almost commuting matrices $(A,B)$ with decomposition \eqref{eq:pertburab}, let 
$v_*$ be a local minimum  of the associated $\w{\mc{L}}(v)$ on $\S^{n-1}$. Then, for both $H(v) = \na_v^2 \mc{L}(\Lad,v)$ and $H(v) = \hess_v \mc{L}(\Lad,v)$ with $\Lad = (\l v, A v\r, \l v, B v\r)$, it holds that 
\begin{align} \label{eq:approx_1}
    \norm{\hess \w{\mc{L}}(v) - H(v)} = O(\norm{v - v_*}) + O(\norm{\De A} + \norm{\De B})\,,
\end{align}
for $v$ near $v_*$ and $\norm{\De A} + \norm{\De B}$ small enough. In particular, if $[A,B] = 0$, we have 
\begin{align} \label{eq:approx_2}
    \norm{\hess \w{\mc{L}}(v) - H(v)} = O(\norm{v - v_*})\,.
\end{align}
\end{lemma}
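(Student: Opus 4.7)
The plan is to take the identity \eqref{eq:hess_rela} as the starting point, which already gives $\Hess\widetilde{\mathcal{L}}(v) - \Hess_v\mathcal{L}(\lambda,\mu,v) = -\tfrac{1}{2}WW^T$ at $\lambda = \langle v, Av\rangle$, $\mu = \langle v, Bv\rangle$. Thus the case $H(v) = \Hess_v\mathcal{L}$ reduces to bounding $\|WW^T\|$. For the case $H(v) = \nabla_v^2\mathcal{L}$, I would additionally expand $\Hess_v\mathcal{L} - \nabla_v^2\mathcal{L}$ using \eqref{eq:rieman_Lv}, which produces the two extra terms $-vv^T\nabla_v^2\mathcal{L}$ and $-\langle\nabla_v\mathcal{L},v\rangle(I-vv^T)$. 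The whole proof then reduces to bounding these simple quantities at $v=v_*$ and extending to a neighborhood by Lipschitz continuity.

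The key estimate is at $v_*$. By Theorem \ref{them:stab}, when $\|\Delta A\|+\|\Delta B\|$ is small compared to the spectral gaps, a local minimizer $v_*$ of $\widetilde{\mathcal{L}}$ sits in a neighborhood of some common eigenspace $S_*$ of $(A_*,B_*)$; comparing $\widetilde{\mathcal{L}}(v_*)$ with the value of $\widetilde{\mathcal{L}}$ at any vector in $S_*$, one obtains
\begin{equation*}
\widetilde{\mathcal{L}}(v_*) = \|(A-\lambda_*)v_*\|^2 + \|(B-\mu_*)v_*\|^2 = O\bigl((\|\Delta A\|+\|\Delta B\|)^2\bigr).
\end{equation*}
Plugging this into the formula \eqref{eq:wwhess} for $W$ gives $\|W(v_*)\| = O(\|\Delta A\|+\|\Delta B\|)$, hence $\|WW^T(v_*)\| = O((\|\Delta A\|+\|\Delta B\|)^2)$. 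For the two extra terms, the Riemannian stationarity of $\widetilde{\mathcal{L}}$ at $v_*$ combined with \eqref{eq:grad_rela} forces $\nabla_v\mathcal{L}(\lambda_*,\mu_*,v_*) = 2[(A-\lambda_*)^2+(B-\mu_*)^2]v_*$ to be parallel to $v_*$, with $\langle v_*,\nabla_v\mathcal{L}(\lambda_*,\mu_*,v_*)\rangle = 2\widetilde{\mathcal{L}}(v_*)$; since $\mathcal{L}$ is quadratic in $v$, we also have $\nabla_v^2\mathcal{L}\cdot v_* = \nabla_v\mathcal{L}(v_*)$, so the rank-one matrix $v_*v_*^T\nabla_v^2\mathcal{L}$ has operator norm $\|\nabla_v^2\mathcal{L}\,v_*\| = 2\widetilde{\mathcal{L}}(v_*)$. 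Both extra terms are therefore $O((\|\Delta A\|+\|\Delta B\|)^2)$ as well.

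To propagate the bounds from $v_*$ to nearby $v$, I would use Lipschitz continuity in $v$ of $W$, $\nabla_v\mathcal{L}$, and $\nabla_v^2\mathcal{L}$ (together with $\lambda = \langle v, Av\rangle$, $\mu = \langle v, Bv\rangle$, which are smooth in $v$): each term gets an extra $O(\|v-v_*\|)$ contribution, yielding \eqref{eq:approx_1}. The commuting case \eqref{eq:approx_2} is then immediate: with $\Delta A = \Delta B = 0$ and $v_*$ an exact common eigenvector, $\widetilde{\mathcal{L}}(v_*) = 0$, so the constant contributions vanish and only the Lipschitz term remains. The main obstacle is really the first step — making rigorous that any local minimizer $v_*$ of interest satisfies $\widetilde{\mathcal{L}}(v_*) = O((\|\Delta A\|+\|\Delta B\|)^2)$, which is what Theorem \ref{them:stab} is for; the rest is bookkeeping of smoothness constants that do not depend on $\|\Delta A\|$, $\|\Delta B\|$ once $A_*, B_*$ are fixed.
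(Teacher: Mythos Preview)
Your proposal is correct and follows essentially the same approach as the paper: bound $\norm{\Hess\widetilde{\mathcal{L}}(v_*)-H(v_*)}$ at the minimizer using the stability result of Theorem~\ref{them:stab}, then extend to nearby $v$ by Lipschitz continuity of $\Hess\widetilde{\mathcal{L}}$ and $H$. The paper's argument is terser---it first observes that in the commuting case $\Hess\widetilde{\mathcal{L}}(v_*)=H(v_*)$ exactly (via \eqref{auxeq_3}--\eqref{auxeq_4}) and then invokes Theorem~\ref{them:stab} for the perturbation---whereas you compute each constituent term ($WW^T$, $v_*v_*^T\nabla_v^2\mathcal{L}$, $\langle\nabla_v\mathcal{L},v_*\rangle(I-v_*v_*^T)$) explicitly and in fact obtain the slightly sharper bound $O\bigl((\norm{\Delta A}+\norm{\Delta B})^2\bigr)$ at $v_*$; but the overall strategy and the ingredients used are the same.
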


\begin{proof}
It suffices to consider \mb{$H(v) = \na_v^2 \mc{L}(\Lad, v)|_{(\Lad,v) = (\l v, A v \r,\l v ,B v\r,v)}$}, since the other one can be similarly analyzed. By formulas \eqref{eq:gradl} and \eqref{eq:grad_rela},  the optimality condition $\grad \w{\mc{L}}(v_*) = 0$ gives
\begin{align} \label{auxeq_3}
    (H(v_*) - \l v_*, H(v_*) v_*\r I) v_* = 0\,,
\end{align}
which yields, by \eqref{eq:rieman_Lv}, 
\begin{align} \label{auxeq_4}
    M(v_*) := \mb{\hess_v \mc{L}(\Lad,v)|_{(\Lad,v) = (\l v_*, A v_* \r, \l v_*, B v_* \r, v_*)}} = H(v_*) - \l v_*, H(v_*) v_*\r I\,.
\end{align}
For the the desired estimate \eqref{eq:approx_1}, we first observe 
\begin{align*}
    \norm{\hess\w{\mc{L}}(v) - H(v)} \le O(\norm{v - v_*}) + \norm{\hess\w{\mc{L}}(v_*) - H(v_*)}\,,
\end{align*}
by the Lipschitz continuity of $\hess\w{\mc{L}}(v)$ and $H(v)$. Then, recalling from Proposition \ref{prob:characmini} and Lemma \ref{prop:ob1} that the local minima of $\w{\mc{L}}(v)$ for commuting matrices are characterized by their common eigenvectors, we have $H(v_*)  v_* = 0$ and $\hess \w{\mc{L}}(v_*) = M(v_*) = H(v_*)$ for $(A,B) \in {\rm F}_{\rm com}$, by formulas \eqref{eq:wwhess}, \eqref{eq:hess_rela}, and \eqref{auxeq_4}. It follows from 
the stability estimate in Theorem \ref{them:stab} that $\norm{\hess\w{\mc{L}}(v_*) - H(v_*)} = O(\norm{\De A} + \norm{\De B})$. Thus, \eqref{eq:approx_1} and also \eqref{eq:approx_2} hold.
\end{proof}



\mb{In what follows, we say that a symmetric matrix $H(v) \in \sn$ is an approximate Riemannian Hessian of $\w{\mc{L}}(v)$ if the property \eqref{eq:approx_1} holds as $\norm{v - v_*} \to 0$ and $\norm{\Delta A} + \norm{\Delta B} \to 0$.} 
In addition to $\hess_v \mc{L}(\Lad,v)$ with $\Lad = (\l v, A v\r, \l v, B v\r)$ from the alternating minimization, Lemma \ref{prop:ob2} also suggests a new choice $\na_v^2 \mc{L}(\Lad,v)$ for approximating  $\hess \w{\mc{L}}(v)$ with the favorable positive semidefinite property. It is easy to see that its restriction on $T_v \S^{n-1}$: $(I_n - v v^T) \na_v^2 \mc{L}(\Lad,v) (I_n - v v^T)$ also satisfies the estimate \eqref{eq:approx_1} and hence can be a good candidate for the approximate Hessian as well. 

\mb{We are now ready to give our approximate Newton method (Algorithm \ref{alg:for ineq}) for solving \eqref{eq:sub-problem} and \eqref{eq:problem with inequality constraint}. We note that the only difference between \eqref{eq:sub-problem} and \eqref{eq:problem with inequality constraint} is the inequality constraint $\dist(v, S_{j-1}) \le \sqrt{\ep}$, which can be easily dealt with by a  projection step, thanks to Lemma \ref{lem:projection}. One may also observe that for $j = 1$, there holds $S_{j - 1} = \S^{n - 1}$ by definition and then \eqref{eq:problem with inequality constraint} reduces to \eqref{eq:sub-problem}. Thus, it suffices to state the algorithm for \eqref{eq:problem with inequality constraint}. For clarity, we recall that given $w_1,\ldots,w_j \in \S^{n-1}$ ($j \ge 1$), 
\begin{align} \label{eq:inter_span}
    S_j := {\rm span}\{w_1,\ldots,w_j\}^\perp \cap \S^{n-1}\,,
\end{align}
and $S_{j,\ep} := \left\{v \in \S^{n-1}\,;\ {\rm dist}(v, S_j) \le \sqrt{\ep}\right\}$ for some $\ep > 0$. If $j = 0$, then $S_j = S_{j,\ep} := \S^{n-1}$. We denote by $\mc{P}_{j,\ep}$ the projection to the minimizer of $\dist(v,S_{j,\ep})$ given in  \eqref{eq:claimtwo_2} and also recall the retraction $R_v$ in \eqref{def:retraction}.}

\RestyleAlgo{ruled}
\begin{algorithm}[!htbp]
\caption{\mb{Projected Riemannian approximate Newton method for \eqref{eq:sub-problem} and \eqref{eq:problem with inequality constraint}}} \label{alg:for ineq}
\KwIn{almost commuting  matrices $A, B \in \sn$; 

\qquad \quad \ \ \mb{vectors $w_1, \ldots, w_{j-1} \in \mathbb{S}^{n-1}$ for $j \ge 1$ ($j = 1$ means no input vectors)}; 

\qquad \quad \ \  parameters $\tau \in (0,1/2)$, $\beta \in (0,1)$, $ \epsilon > 0$, and $ 0< \ep \ll 1$; 

\qquad \quad \ \  approximate Hessian matrix $H(\dd) \in \sn$} 

\KwOut{a triplet $(\lad,\mu,v) \in \R^2 \t \S^{n-1}$} 

generate random initial vector $v_0 \in S_{j - 1}$ with $S_{j - 1}$ defined by $\{w_1, \ldots, w_{j-1}\}$ as in \eqref{eq:inter_span}

$k = 0$ 

\While{$\norm{\grad\w{\mc{L}}(v_k)} > \epsilon$}{
compute the search direction $s_k \in T_v \mathbb{S}^{n-1}$ by solving 
\begin{equation} \label{eq:quasi Newton equation}
  - H(v_k)[s_k] = \grad{\w{\mc{L}}}(v_k)
\end{equation}

compute the stepsize $\alpha_k$ by the backtracking line search:
\begin{align} \label{eq:new_line_search}
     \alpha_k := \max\left\{\beta^j\,;\ \w{\mc{L}}(R_{v_k}(\beta^{j}s_k)) \le \w{\mc{L}}(v_k) + \tau \beta^j \l \grad \w{\mc{L}}(v_k), s_k\r \right\}
\end{align}

$v_{k+1} = \mc{P}_{j,\ep} R_{v_k}(\alpha_k s_k)$

$k = k+1$

}

$v = v_k \in \S^{n-1}$, $\lad = \l v_k, A v_k\r$, $\mu = \l v_k, B v_k\r$
\end{algorithm}
\vspace{2 mm}

We end this section with several remarks. First, by Theorem \ref{thm:generic_almost}, the parameter $\ep$ in Algorithm \ref{alg:for ineq} can be chosen of the order $(\norm{\De A} + \norm{\De B})/n^{-3/2}$ or larger. Second, 
by the above discussion, it is clear that Algorithm \ref{alg:for ineq} with $H(v) = \hess_v \mc{L}(\l v, A v\r, \l v, B v\r,v)$ gives the alternating minimization for \eqref{eq:sub-problem} and \eqref{eq:problem with inequality constraint}, while for $H(v) = \hess \w{\mc{L}}(v)$, Algorithm \ref{alg:for ineq} is nothing else than the standard Riemannian Newton's method. Moreover, combining the abstract VJD scheme in Algorithm \ref{alg:VJD} with Algorithm \ref{alg:for ineq} gives the one used in practice. 

Finally, for the step \eqref{eq:quasi Newton equation}, if the approximate Hessian maps $T_v \S^{n-1}$ to $T_v \S^{n-1}$, we can simply take the Moore Penrose generalized inverse of $H$ to find $s_k = - H(v_k)^\dag \grad \w{\mc{L}}(v_k)$.  
But if $H(v)$ does not have $T_v \S^{n-1}$ as an invariant subspace, say, $H(v) = \na_v^2 \mc{L}(\l v, A v\r, \l v, B v\r,v)$, then the equation \eqref{eq:quasi Newton equation} may not admit a solution in $T_v \S^{n-1}$. In this case, we consider the least-squares solution to \eqref{eq:quasi Newton equation}. See Section \ref{subsec:compare H} for more computational details, where we numerically test the aforementioned candidates of the approximate Hessian $H(v)$.




\section{Error analysis}\label{sec:error analysis}
This section is devoted to the error analysis for our VJD algorithm. In Section \ref{subsec:converge}, we establish the global convergence of Algorithm \ref{alg:for ineq} for \eqref{eq:sub-problem} (i.e., without the inequality constraint), while in Section \ref{subsec:stable}, we discuss the numerical stability of Algorithm \ref{alg:VJD} with respect to noises and numerical errors. 

\subsection{Convergence of Riemannian approximate Newton method} \label{subsec:converge}
Before stating our main theorem, we recall some preliminaries. First, by \cite[Lemma 6]{ring2012optimization}, there exist $a_0 > 0$, $a_1 > 0$, and $\d_{a_0,a_1} > 0$ such that for any $v \in \S^{n-1}$ and $p \in T_v \S^{n-1}$ with $\norm{p} \le \d_{a_0,a_1}$, 
\begin{align} \label{eq:localest}
a_0 \norm{p} \le d(v, R_v(p)) \le a_1 \norm{p}\,,
\end{align}
where $d(\dd,\dd)$ is the geodesic distance on $\S^{n-1}$. We define the following constant for later use:
\begin{align} \label{eq:localconst}
    K := \sup\left\{\frac{d(R_v(p), R_v(q)) }{\norm{p-q}}\,;\ p, q \in T_v \S^{n-1}\,,\ \norm{p} \le \d_{a_0,a_1}\,,\ \norm{q} \le \d_{a_0,a_1}  \right\}. 
\end{align}
We denote by $P_{v,w}$ the parallel transport of tangent vectors at $v$ to the tangent vectors at $w$ along the geodesic curve connecting points $v$ and $w$; see \cite{boumal2022intromanifolds} for the precise definition of $P_{v,w}$. We also recall the following expansion for the Riemannian gradient of a smooth function $f$ on $\S^{n-1}$ \cite{ferreira2002kantorovich,li2019convergence}: for $w$ near $v$, 
   \begin{align} \label{eq:gradient_exp}
    \grad f (w) = P_{v, w} \grad f(v) + P_{v, w} \hess f(v)[R_{v}^{-1}w] + O(\norm{R_{v}^{-1}w}^2)\,.
\end{align}

\begin{theorem} \label{thm:glob_conv}
Let $H(v): T_v \S^{n-1} \to T_v \S^{n-1}$ be an positive semidefinite approximate Hessian of $\w{\mc{L}}$ satisfying \eqref{eq:approx_1}. Suppose that $v_*$ is an accumulation point of the iterative sequence $\{v_k\}$ generated by Algorithm \ref{alg:for ineq} for \eqref{eq:sub-problem}; and that $H(v_k)$ for all $k$ and $H(v_*)$ are non-singular operators on the tangent space. Then, $v_*$ is a stationary point of \eqref{eq:sub-problem}, and it holds that for $0 \neq  \norm{[A,B]} \ll 1$, the sequence $\{v_k\}$ converges to $v_*$ linearly with the convergence rate:
\begin{equation*}
    \mb{\limsup_{k \to \infty} \frac{d(v_{k+1},v_*)}{d(v_k,v_*)} = O(\norm{[A,B]}^{1/2})\,.}
\end{equation*}
In the commuting case $[A,B] = 0$, we have that $\{v_k\}$ converges to $v_*$ quadratically:
\begin{equation*}
    \mb{\limsup_{k \to \infty} \frac{d(v_{k+1},v_*)}{d(v_k,v_*)^2} = 0\,.}
\end{equation*}
\end{theorem}

\begin{proof}
In this proof, we always regard $H(v)$ as a linear operator on the tangent space $T_v \S^{n-1}$. By abuse of notation, we define the norm for any $T: T_v \S^{n-1} \to T_v \S^{n-1}$ by $\norm{T} = \sup\{\norm{T v}\,;\ v \in T_v \S^{n-1}\,,\ \norm{v} = 1\}$. Since $H(v_*)$ is non-singular and $H(v)$ is continuous, there exists a neighborhood $U$ of $v_*$ such that $H(v)$ for $v \in U$ is positive definite on $T_v \S^{n-1}$ and satisfies
\begin{align} \label{eq:localinver}
    \norm{H(v)^{-1}} \le 2 \norm{H(v_*)^{-1}}\,,
\end{align}
which implies that we can find a constant $\si > 0$ such that $H(v_k)^{-1} \ge \si I$ on $T_{v_k} \S^{n-1}$.  It follows that 
\begin{align} \label{eq:descent}
    \l \grad \w{\mc{L}}(v_k), s_k \r = - \l \grad \w{\mc{L}}(v_k), H(v_k)^{-1} \grad \w{\mc{L}}(v_k)  \r \le - \si \norm{\grad \w{\mc{L}}(v_k)}^2\,.
\end{align}
Then by \eqref{eq:descent} and the line search condition \eqref{eq:new_line_search}, we see that the stepsize $\alpha_k$ is well defined. 

\emph{Step 1.} We first prove that the accumulation point $v_*$ is also the stationary point, i.e., $\grad \w{\mc{L}}(v_*) = 0$.  Again by  \eqref{eq:new_line_search}, we have 
\begin{align} \label{eq:decrease}
    \w{\mc{L}}(v_k) - \w{\mc{L}}(v_{k+1}) \ge - \tau \alpha_k \l \grad \w{\mc{L}}(v_k), s_k \r \ge \alpha_k \si \tau \norm{\grad \w{\mc{L}}(v_k)}^2 > 0\,,
\end{align}
which means that $\w{\mc{L}}(v_k)$ is strictly decreasing. By \eqref{eq:decrease}, if $\liminf_{k \to \infty} \alpha_k = \mu > 0$ holds, then we find
\begin{align*}
     \sum_{k = 1}^\infty \norm{\grad \w{\mc{L}}(v_k)}^2 < + \infty\,,
\end{align*}
and $\norm{\grad \w{\mc{L}}(v_k)} \to 0$ as $k \to \infty$, which readily gives $\norm{\grad \w{\mc{L}}(v_*)} = 0$. If not, i.e., 
$\liminf_{k \to \infty} \alpha_k = 0$ holds, given any $s \in \mathbb{N}$, we can take a subsequence $k_j$ such that $\alpha_{k_j} < \beta^{-s}$ for $j$ large enough.
Hence, $\beta^{-s}$ does not satisfy the linear search condition \eqref{eq:new_line_search}, that is, 
\begin{equation} \label{eq:violinesearch}
     \w{\mc{L}}(R_{v_{k_j}}(\beta^{-s} s_{k_j})) > \w{\mc{L}}(v_{k_j}) + \tau \beta^{-s} \l \grad \w{\mc{L}}(v_{k_j}), s_{k_j} \r\,.
\end{equation}
Without loss of generality, we assume that there holds $v_{k_j} \to v_*$ and $s_{k_j} \to s_* = - H(v_*)^{-1} \grad \w{\mc{L}}(v_*)$. 
Taking the limit $j \to \infty$ in \eqref{eq:violinesearch} gives 
\begin{equation*}
     \w{\mc{L}}(R_{v_{*}}(\beta^{-s} s_{*})) > \w{\mc{L}}(v_{*}) + \tau \beta^{-s} \l \grad \w{\mc{L}}(v_{*}), s_{*} \r\,.
\end{equation*}
Then it is easy to see 
\begin{align*}
    \tau \l \grad \w{\mc{L}}(v_{*}), s_{*} \r  < \liminf_{s \to \infty} \frac{\w{\mc{L}}(R_{v_{*}}(\beta^{-s} s_{*})) - \w{\mc{L}}(v_{*})}{\beta^{-s}} = \l \grad \w{\mc{L}}(v_*), s_* \r\,. 
\end{align*}
Noting $\tau \in (0,1/2)$ in Algorithm \ref{alg:for ineq}, we obtain $\l \grad \w{\mc{L}}(v_*), s_*\r  = 0$ and thus
$\grad \w{\mc{L}}(v_*) = 0$. 

\emph{Step 2.} We next show that there exists a neighborhood $V \subset U$ of $v_*$ such that if $v_k \in V$, then $\alpha_k = 1$. Recalling that $R_v$ is a second-order retraction, we have the following Taylor expansion \cite[Proposition 5.43]{boumal2022intromanifolds}: 
\begin{equation} \label{eq:taylor}
\w{\mc{L}}(R_v(s)) = \w{\mc{L}}(v) + \l \grad \w{\mc{L}}(v), s\r +  \frac{1}{2}\l \hess \w{\mc{L}}(v)[s], s\r +  O(\norm{s}^3)\,,
\end{equation}
for $s \in T_v \S^{n-1}$ with $\norm{s}$ small enough. Then by \eqref{eq:localinver}, for $v$ near $v_*$, it holds that 
\begin{align*}
     s = - H(v)^{-1} \grad \w{\mc{L}}(v) = O(\norm{v - v_*})\,,
\end{align*}
which, along with \eqref{eq:taylor}, yields 
\begin{align} \label{eq:taylorred}
   \w{\mc{L}}(R_v(s)) - \w{\mc{L}}(v) & = \l \grad \w{\mc{L}}(v) ,s \r +  \frac{1}{2}\l \hess \w{\mc{L}}(v)[s], s\r + O(\norm{s}^3) \notag \\
   & = \frac{1}{2} \l \grad \w{\mc{L}}(v) ,s \r + O\left(\norm{\Delta A} + \norm{\Delta B}\right) \norm{v - v_*}^2 + O(\norm{v - v_*}^3)\,,
\end{align}
where we have also used the assumption that \eqref{eq:approx_1} holds. The above estimate \eqref{eq:taylorred} allows us to find a neighborhood $V$ of $v_*$ such that for any $v \in V$, there holds 
\begin{equation*}
    \w{\mc{L}}(R_v(s)) - \w{\mc{L}}(v) \le \tau \l \grad \w{\mc{L}}(v) ,s \r\,.
\end{equation*}
By the above estimate and the line search condition \eqref{eq:new_line_search}, we conclude our claim.  


\emph{Step 3.} We finally consider the local convergence rate with stepsize one. For this, we claim that there exists a neighborhood $V_0 \subset V$ of $v_*$ such that $R_v(s) \in V_0$ holds for any $v \in V_0$, where $s = - H(v)^{-1} \grad \w{\mc{L}}(v)$. Let 
\begin{equation*}
    r(v) := \grad \w{\mc{L}} (v) - P_{v_*, v} \hess \w{\mc{L}}(v_*)[R_{v_*}^{-1}v]\,.
\end{equation*}
By a direct computation with the relation $R_v^{-1}(v_*) = - P_{v_*,v}R_{v_*}^{-1}(v)$ \cite{boumal2022intromanifolds}, we have 
 \begin{align*}
  H(v)^{-1} \grad \w{\mc{L}} (v) + R_v^{-1}(v_*) = H(v)^{-1} \left(r(v) +  P_{v_*, v} \hess \w{\mc{L}}(v_*)[R_{v_*}^{-1}v] - H(v) P_{v_*,v}R_{v_*}^{-1}(v) \right),
\end{align*}
which gives the estimate 
\begin{align} \label{auxeq_1}
 \norm{s - R_v^{-1}(v_*)} & \le \norm{H(v)^{-1}} \left( \norm{r(v)} + \norm{P_{v_*, v} \hess \w{\mc{L}}(v_*) - H(v) P_{v_*,v}} \norm{R_{v_*}^{-1}(v)} \right).
\end{align}
By the Lipschitz property of $H(v)$ and $\norm{v - v_*} \le d(v,v_*)$, we estimate, for $v$ near $v_*$,
\begin{align} \label{auxeq_2}
     \norm{P_{v_*, v} \hess \w{\mc{L}}(v_*) - H(v) P_{v_*,v}} \le   \norm{P_{v_*, v} \hess \w{\mc{L}}(v_*) - H(v_*) P_{v_*,v}} + O(d(v,v_*))\,. 
\end{align}
Without loss of generality, we let $V_0$ be small enough such that \eqref{eq:localest} holds with constant $K$ defined in \eqref{eq:localconst}. Then it is easy to see from \eqref{auxeq_1} and \eqref{auxeq_2} that for some constant $C$,
\begin{align}  \label{est:rate_1}
    \limsup_{v \to v_*} \frac{d(R_v(s),v_*)}{d(v,v_*)} \le  \limsup_{v \to v_*} \frac{K \norm{s - R_v^{-1}(v_*)}}{a_0 \norm{R_{v_*}^{-1}(v)}} \le C \norm{\hess \mc{L}(v_*)  - H(v_*)} = O(\norm{[A,B]}^{1/2})\,,
\end{align}
 where we used $r(v) =  O\left(\norm{R_{v_*}^{-1}(v)}^2\right)$ by \eqref{eq:gradient_exp} and $\grad \w{\mc{L}}(v_*) = 0$, and the last estimate is from \eqref{auxeqlin} and \eqref{eq:approx_1}. When $\norm{[A,B]}$ is small enough, we obtain 
\begin{align} \label{est:rate_11}
 \limsup_{v \to v_*}  \frac{d(R_v(s),v_*)}{d(v,v_*)} < 1\,.  
\end{align}
In particular, if $[A,B] = 0$, it holds that $\hess \w{\mc{L}}(v_*) = H(v_*)$ by Lemma \ref{prop:ob2}. In this case, similarly, we have 
\begin{align} \label{est:rate_2}
      \limsup_{v \to v_*} \frac{d(R_v(s),v_*)}{d(v,v_*)^2} < + \infty\,.
\end{align}
We now have all the ingredients to finish the proof. Since $v_*$ is the accumulation point, there exists $k_0$ such that $v_{k_0} \in V_0$. Then by \emph{Step 2} and \emph{Step 3}, a simple induction argument yields $v_k \in V_0$ and $\alpha_k = 1$ for $k \ge k_0$. In view of  estimates \eqref{est:rate_1} and \eqref{est:rate_2}, we readily have the global convergence of $v_k$ and its convergence rate. 
\end{proof}

\begin{remark}
\mb{Empirically, the projected approximate Newton method in Algorithm \ref{alg:for ineq} exhibits a similar convergence rate when applied to \eqref{eq:problem with inequality constraint} with inequality constraints as it does for \eqref{eq:sub-problem}; see Section \ref{subsec:compare H} for the numerical results.}
Nevertheless, to the best of the author's knowledge, there exists very little research on the convergence of projection-type Riemannian optimization algorithms, and we choose to postpone the convergence analysis of Algorithm \ref{alg:for ineq} for \eqref{eq:problem with inequality constraint} to future investigations. We also refer interested readers to \cite{bergmann2019intrinsic,liu2020simple,yang2014optimality} for recent advancements in Riemannian optimization with constraints. 
\end{remark}

\subsection{Numerical stability} \label{subsec:stable}
We next show that for commuting matrices $(A, B) \in {\rm F}_{\rm com}$, our VJD algorithm can produce a reliable simultaneous approximate diagonalization. Note that when $[A,B] = 0$, for suitable $\ep$ in \eqref{eq:const}, each sub-optimization problem \eqref{eq:sub-problem} or \eqref{eq:problem with inequality constraint} in Algorithm \ref{alg:VJD} admits a minimizer $(\lad^{(j)},\mu^{(j)},v^{(j)})$ satisfying $\mc{L}(\lad^{(j)},\mu^{(j)},v^{(j)}) = 0$. However, due to the presence of noises and iteration errors, the input matrices may not exactly commute and $(\lad^{(j)},\mu^{(j)},v^{(j)})$ can not be perfectly solved. In this scenario, we assume that the sub-optimization problems \eqref{eq:sub-problem} and \eqref{eq:problem with inequality constraint} are solved by Algorithm \ref{alg:for ineq} with outputs $(\w{\lad}_j,\w{\mu}_j,\w{v}_j)$ satisfying
\begin{align} \label{eq:ini_assp}
     \mc{L}(\w{\lad}_j,\w{\mu}_j,\w{v}_j) = O(\d)\,,
\end{align}
for some precision parameter $\d > 0$, and prove that our Algorithm \ref{alg:VJD} can output matrices $U \in \on$ and $D_1,D_2 \in \diag{n}$ with a controlled error; see Proposition \ref{prop:numeri_sta}. \mb{We remark that the justification of the assumption \eqref{eq:ini_assp} relies on the convergence of Algorithm \ref{alg:for ineq} for \eqref{eq:problem with inequality constraint} and hence is left open.} 
The proof is based on the following two lemmas: a standard perturbation result for eigenvalue problems \cite[p.58]{rellich1969perturbation} and a backward error stability result for the common eigenvector problem \cite[Theorem 4]{Ghazi2008}. 

\begin{lemma}\label{lemma:perturbation}
Suppose that $A \in \sn$ has $n$ eigenvalues $\lambda_1 \le \cdots \le \lambda_n$ with the associated orthonormal eigenvectors $v_1,\ldots,v_n$. For any $\De A \in \sn$ with $\norm{\De A}$ small enough, let $\w{\lambda}_1 \le \cdots \le \w{\lambda}_n$ be the eigenvalues of $A + \De A$. Then, the perturbed matrix $A + \De A$ has orthonormal eigenvectors $\w{v}_i$ associated with $\w{\lad}_i$, which satisfy
\begin{equation}\notag
 |\w{\lad}_i - \lad_i| = O(\norm{\De A})\,,\q \norm{\w{v}_i - v_i} = O(\norm{\De A})\,.
\end{equation}
\end{lemma}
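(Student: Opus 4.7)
The plan is to establish the eigenvalue and eigenvector estimates separately, handling possible degeneracies in the spectrum of $A$ carefully. The eigenvalue bound is the easier part: by Weyl's inequality (a direct consequence of the Courant--Fischer min--max characterization of eigenvalues of symmetric matrices), one has $|\w{\lad}_i - \lad_i| \le \norm{\Delta A}$ for every $i$, which is the claimed $O(\norm{\Delta A})$ estimate.

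For the eigenvector statement, I would first group the eigenvalues of $A$ into clusters of equal values and let $E_k$ denote the corresponding eigenspaces, $P_k$ the orthogonal projections onto them. If $\norm{\Delta A}$ is smaller than $\d(A)/4$, Weyl's inequality guarantees that the perturbed spectrum splits into analogous clusters of the same multiplicities, giving eigenspaces $\w{E}_k$ with projectors $\w{P}_k$. The key analytic step is to bound $\norm{P_k - \w{P}_k}$ via the Riesz contour integral representation $P_k = \frac{1}{2\pi i}\oint_{\gamma_k}(zI - A)^{-1}\,dz$, where $\gamma_k$ is a small circle in the complex plane enclosing exactly the cluster of eigenvalues of $A$ around $\lad_{i_k}$. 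Writing the analogous formula for $\w{P}_k$, the resolvent identity $(zI-A-\Delta A)^{-1} - (zI-A)^{-1} = (zI-A)^{-1}\Delta A (zI-A-\Delta A)^{-1}$, together with the uniform bound $\norm{(zI - A)^{-1}} \le 2/\d(A)$ on $\gamma_k$, yields $\norm{P_k - \w{P}_k} = O(\norm{\Delta A})$.

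Given this closeness of eigenspaces, the remaining task is to align bases within each matching pair $(E_k, \w{E}_k)$. Let $V_k$ collect the prescribed eigenvectors $v_i$ of $A$ inside $E_k$ as columns, and let $\w{W}_k$ collect any orthonormal basis of $\w{E}_k$. Solving the orthogonal Procrustes problem $\min_{Q \in O(\dim E_k)} \norm{\w{W}_k Q - V_k}_{{\rm F}}$ via the SVD of $V_k^T \w{W}_k$ produces an orthogonal $Q_k^*$, and I take the new basis $\w{V}_k = \w{W}_k Q_k^*$ as the eigenvectors $\w{v}_i$ for $i_k \le i < i_{k+1}$. A short calculation using $\w{V}_k \w{V}_k^T = \w{P}_k$ and $V_k V_k^T = P_k$ shows $\norm{\w{V}_k - V_k}_{{\rm F}} \le C \norm{\w{P}_k - P_k}_{{\rm F}}$, so each column satisfies $\norm{\w{v}_i - v_i} = O(\norm{\Delta A})$. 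Orthonormality across different clusters is automatic because $\w{E}_k \perp \w{E}_{k'}$ for $k \neq k'$.

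The main obstacle is the degenerate case, where individual eigenvectors of $A$ are not uniquely determined and one cannot simply invoke the first-order analytic perturbation formula $\w{v}_i = v_i + \sum_{j \ne i}\frac{\l v_j, \Delta A\, v_i\r}{\lad_i - \lad_j}v_j + O(\norm{\Delta A}^2)$. The contour-integral/projection plus Procrustes alignment argument outlined above is precisely designed to sidestep this issue, by reducing the perturbation problem to a statement about the closeness of the total spectral projectors (which remain smooth in $\Delta A$ even when individual eigenvectors do not) and then exploiting the rotational freedom within each eigenspace.
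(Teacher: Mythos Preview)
The paper does not give its own proof of this lemma; it is quoted as ``a standard perturbation result for eigenvalue problems'' and attributed to \cite[p.~58]{rellich1969perturbation}. Your proposal is a correct and self-contained argument: Weyl's inequality handles the eigenvalue bound, the Riesz contour-integral representation of the spectral projectors gives $\norm{P_k-\w P_k}=O(\norm{\Delta A})$, and the Procrustes alignment within each eigenspace cluster produces orthonormal $\w v_i$ with $\norm{\w v_i-v_i}=O(\norm{\Delta A})$, correctly handling the degenerate case where the naive first-order eigenvector formula breaks down. One small point worth making explicit in your write-up is the quantitative link between the Procrustes distance and the projector gap: since $M=\w W_k^T V_k$ satisfies $M^T M=I+V_k^T(\w P_k-P_k)V_k$, one gets $\sum_j(1-\sigma_j)\le\sum_j(1-\sigma_j^2)=\tfrac12\norm{P_k-\w P_k}_{\rm F}^2$, hence $\norm{\w V_k-V_k}_{\rm F}\le\norm{P_k-\w P_k}_{\rm F}$, which is exactly the inequality you assert.
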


\begin{lemma} \label{lem:backstab}
Suppose that $v \in \S^{n-1}$ is a common eigenvector of matrices $A, B \in \sn$ with $A v = \lad v$ and $B v = \mu v$. If $\w{v}$ 
is an approximation of $v$, then there exist $\w{\lad},\w{\mu} \in \R$ and $\De A, \De B \in \sn$ such that 
\begin{align*}
 (A + \De A)\w{v} = \w{\lad} \w{v}\,,\q (B + \De B)\w{v} = \w{\mu} \w{v}\,,
\end{align*}
and
\begin{align*}
\sqrt{\frac{\norm{\De A}^2}{\norm{A}^2} + \frac{\norm{\De B}^2}{\norm{B}^2}} = \sqrt{
\frac{\norm{ A \w{v} - \l \w{v}, A \w{v}\r \w{v}}^2}{\norm{A}^2} + \frac{\norm{ B \w{v} - \l \w{v}, B \w{v}\r \w{v}}^2}{\norm{B}^2}
}\,.
\end{align*}
\end{lemma}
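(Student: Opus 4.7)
My plan is to exhibit $\tilde\lambda,\tilde\mu,\Delta A,\Delta B$ by an explicit construction that makes the claimed equality drop out rather than requiring an inequality argument. The guiding idea is the standard backward-error philosophy: pick the scalars $\tilde\lambda,\tilde\mu$ that minimize the residuals, then design the symmetric perturbations so that they absorb exactly those residuals, and no more.

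\textbf{Step 1: choosing the eigenvalues.} I would take the Rayleigh quotients $\tilde\lambda := \langle \tilde v, A\tilde v\rangle$ and $\tilde\mu := \langle \tilde v, B\tilde v\rangle$. The key consequence of this choice is that the residuals $r_A := \tilde\lambda\tilde v - A\tilde v$ and $r_B := \tilde\mu\tilde v - B\tilde v$ each satisfy $\langle r_A,\tilde v\rangle = \tilde\lambda - \langle\tilde v,A\tilde v\rangle = 0$ (and analogously for $r_B$). This orthogonality will be critical both for building the perturbations symmetrically and for computing their operator norms sharply.

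\textbf{Step 2: constructing the perturbations.} I would define the symmetric rank-two updates $\Delta A := r_A \tilde v^T + \tilde v r_A^T$ and $\Delta B := r_B \tilde v^T + \tilde v r_B^T$, which belong to $\sn$ by inspection. Using $\|\tilde v\|=1$ together with $\langle r_A,\tilde v\rangle = 0$, a one-line computation gives $\Delta A\tilde v = r_A$, whence $(A+\Delta A)\tilde v = A\tilde v + r_A = \tilde\lambda\tilde v$; the argument for $B$ is identical.

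\textbf{Step 3: computing the operator norms.} It remains to show $\|\Delta A\| = \|r_A\|$ and $\|\Delta B\| = \|r_B\|$, since then, because $\|r_A\|=\|A\tilde v-\langle\tilde v,A\tilde v\rangle\tilde v\|$ and likewise for $B$, the left-hand side of the claimed identity matches the right-hand side termwise. For the norm, I note that any symmetric matrix of the form $M = u w^T + w u^T$ with $u\perp w$ and $\|w\|=1$ vanishes on the orthogonal complement of $\mathrm{span}\{u,w\}$, and in the orthonormal basis $\{w,\,u/\|u\|\}$ of that two-dimensional subspace it is represented by an antidiagonal $2\times 2$ block with off-diagonal entries equal to $\|u\|$. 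Its eigenvalues are therefore $\pm\|u\|$, so $\|M\|=\|u\|$. Applying this with $(u,w)=(r_A,\tilde v)$ and $(r_B,\tilde v)$ concludes the proof.

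I do not anticipate a real obstacle. The only mildly subtle point is that the symmetric rank-two update has operator norm equal to $\|r_A\|$ rather than the naive bound $2\|r_A\|$; this improvement rests entirely on the orthogonality $r_A\perp\tilde v$ secured by the Rayleigh-quotient choice. An unsymmetric correction such as $\Delta A = r_A\tilde v^T$ would satisfy $(A+\Delta A)\tilde v = \tilde\lambda\tilde v$ with the same norm but would violate the symmetry constraint $\Delta A\in\sn$; symmetrising yields exactly the rank-two form above without inflating the norm.
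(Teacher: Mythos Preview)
Your construction is correct and complete: the Rayleigh-quotient choice forces $r_A\perp\tilde v$, the symmetric rank-two update $\Delta A=r_A\tilde v^T+\tilde v r_A^T$ then satisfies $(A+\Delta A)\tilde v=\tilde\lambda\tilde v$, and the orthogonality yields the sharp norm $\|\Delta A\|=\|r_A\|$ exactly as you argue. Note that the paper does not actually prove this lemma but quotes it as \cite[Theorem 4]{Ghazi2008}; your argument is precisely the standard backward-error construction underlying that reference, so there is nothing further to compare.
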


We now state and prove the main result of this section. 

\begin{proposition} \label{prop:numeri_sta}
For commuting matrices $A, B \in \sn$, suppose that the minimizers $(\lad^{(j)},\mu^{(j)},v^{(j)})$ in Algorithm \ref{alg:VJD} are approximated by $(\w{\lad}_j,\w{\mu}_j,\w{v}_j)$ computed from Algorithm \ref{alg:for ineq} with the estimate \eqref{eq:ini_assp}. Then, Algorithm \ref{alg:VJD} generates matrices $U \in \on$ and $D_1,D_2 \in \diag{n}$ that satisfy
\begin{align*}
    \norm{A - U D_1 U^T}^2 + \norm{B - U D_2 U^T}^2 = O(n \d) + O(n^2 \epsilon \sqrt{\d})\,,
\end{align*}
where $\epsilon$ is the threshold parameter in the stopping criterion of Algorithm \ref{alg:for ineq}.
\end{proposition}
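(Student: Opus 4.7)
The plan is to bound the squared Frobenius norm $\norm{A - U\Lad_1 U^T}_{\rm F}^2$, which majorizes the squared operator norm appearing in the statement, by splitting the analysis into (i) the columnwise residual error and (ii) the polar-projection error incurred when Algorithm~\ref{alg:VJD} replaces $V = [\w v_1,\ldots,\w v_n]$ by its nearest orthogonal matrix $U$ via the SVD step. Since $U$ is orthogonal, I work with $\norm{U^T A U - \Lad_1}_{\rm F}$; writing $u_j = \w v_j + e_j$ with $\sum_j\norm{e_j}^2 = \norm{U-V}_{\rm F}^2$, the diagonal and off-diagonal entries of $U^T A U - \Lad_1$ decompose into residual terms $\l \w v_j, r_k\r$ with $r_k = A\w v_k - \w\lad_k \w v_k$, orthogonality-defect terms $\w\lad_k \l \w v_j,\w v_k\r$, and mixed terms involving $e_j,e_k$ bounded in terms of $\norm{A}$ and $\norm{U-V}_{\rm F}$.

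The first ingredient is immediate: the hypothesis $\mc{L}(\w\lad_j,\w\mu_j,\w v_j) = O(\d)$ summed over $j$ gives $\sum_k \norm{r_k}^2 \le \sum_k \mc{L}(\w v_k) = O(n\d)$, so by Cauchy--Schwarz the residual contribution $\sum_{j,k}|\l \w v_j, r_k\r|^2 \le \norm{V}_{\rm op}^2 \sum_k\norm{r_k}^2$ is $O(n\d)$. The more delicate ingredient is the almost-orthogonality of $\{\w v_j\}$, which simultaneously controls the orthogonality-defect terms and the size of $\norm{U-V}_{\rm F} = \norm{\Sigma - I}_{\rm F}$ (from the SVD $V = U_1\Sigma U_2^T$ and $U = U_1 U_2^T$). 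In the commuting regime, I would apply Lemma~\ref{lem:backstab} to each $\w v_j$, producing perturbations $\De A_j,\De B_j$ of norm $O(\sqrt\d)$ making $\w v_j$ an exact common eigenvector of $(A+\De A_j, B+\De B_j)$; Lemma~\ref{lemma:perturbation} then places $\w v_j$ within $O(\sqrt\d)$ (up to spectral-gap factors) of a genuine common eigenvector $v_j^*$ of $(A,B)$. Since true common eigenvectors across distinct joint eigenspaces are orthogonal, and the projection $\mc{P}_{j,\ep}$ enforces separation within a shared eigenspace, the inner products $\l \w v_j,\w v_k\r$ are small; combining this with the gradient-stopping bound $\norm{\grad \w{\mc{L}}(\w v_j)} \le \epsilon$ yields $\norm{V^T V - I}_{\rm F} = O(n\epsilon)$ and hence $\norm{U-V}_{\rm F} = O(n\epsilon)$ by a standard singular-value perturbation.

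Summing the three types of contributions and repeating the argument for $B$ should yield the claimed estimate, with the mixed term $O(n^2\epsilon\sqrt\d)$ arising from pairing the residual scale $\sqrt\d$ (from $\norm{r_k} = O(\sqrt\d)$) with the orthogonality-error scale $\norm{U-V}_{\rm F} = O(n\epsilon)$ in the off-diagonal contribution to $U^T A U - \Lad_1$, together with the cross expansion $\l u_j, A u_k\r = \l \w v_j, A \w v_k\r + \l e_j, A\w v_k\r + \l \w v_j, A e_k\r + \l e_j, A e_k\r$. The main obstacle is the careful bookkeeping of how the gradient tolerance $\epsilon$ propagates into the almost-orthogonality of $\w v_j$: a naive use of the algorithmic distance constraint $\dist(\cdot,S_{j-1}) \le \sqrt\ep$ alone gives only the weaker $O(n^2\ep)$ scaling, so the argument must exploit the backward-stability viewpoint of Lemma~\ref{lem:backstab} together with the gradient tolerance to extract the correct $\epsilon\sqrt\d$ interaction, with spectral-gap constants from Lemma~\ref{lemma:perturbation} absorbed into the implicit $O(\cdot)$ constants.
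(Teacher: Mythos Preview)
Your proposal misidentifies where the parameter $\epsilon$ enters the argument, and this is a genuine gap rather than a bookkeeping issue. You claim that the gradient-stopping bound $\norm{\grad\w{\mc{L}}(\w v_j)}\le\epsilon$, combined with the backward-stability/perturbation lemmas, yields $\norm{V^TV-I}_{\rm F}=O(n\epsilon)$ and hence $\norm{U-V}_{\rm F}=O(n\epsilon)$. But the gradient bound at $\w v_j$ says only that $\w v_j$ is close to a critical point of $\w{\mc{L}}$; it carries no direct information about inner products $\l\w v_i,\w v_j\r$ for $i\neq j$. In the paper the almost-orthogonality is obtained \emph{entirely} from $\d$: Lemma~\ref{lem:backstab} and Lemma~\ref{lemma:perturbation} give $\norm{\w v_j-v_j}=O(\sqrt\d)$ and hence $|\l\w v_i,\w v_j\r|=O(\sqrt\d)$, so that $\norm{V^TV-I}_{\rm F}=O(n\sqrt\d)$ and $\norm{U-V}_{\rm F}^2=\norm{I-\Sigma}_{\rm F}^2=O(n^3\d)$. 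With your claimed $O(n\epsilon)$ scaling for $\norm{U-V}_{\rm F}$, the cross terms in your entrywise expansion do not recombine to the stated bound; the $O(n^2\epsilon\sqrt\d)$ term in the conclusion does not arise the way you describe.

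The paper's route is also structurally different from your entrywise expansion. Rather than decomposing $(U^TAU-\Lad_1)_{jk}$ into residual, orthogonality-defect, and mixed pieces, the paper observes that since the columns $\bar v_j$ of $U$ are orthonormal, one has directly
\[
\norm{A-U\Lad_1U^T}_{\rm F}^2+\norm{B-U\Lad_2U^T}_{\rm F}^2\le\sum_{j=1}^n\w{\mc{L}}(\bar v_j),
\]
and then compares $\w{\mc{L}}(\bar v_j)$ to $\w{\mc{L}}(\w v_j)$. It is precisely in this comparison that $\epsilon$ enters: since $\norm{\grad\w{\mc{L}}(\w v_j)}\le\epsilon$, a first-order expansion gives $|\w{\mc{L}}(\bar v_j)-\w{\mc{L}}(\w v_j)|\le O(\epsilon)\norm{\bar v_j-\w v_j}$, and summing over $j$ with $\sum_j\norm{\bar v_j-\w v_j}\le\sqrt n\,\norm{U-V}_{\rm F}=O(n^2\sqrt\d)$ produces the cross term $O(n^2\epsilon\sqrt\d)$. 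Thus $\epsilon$ pairs with $\sqrt\d$ through the Taylor step, not through the polar-projection error as you propose.
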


\begin{proof}
We first estimate the error for the nearest
orthogonal matrix problem \eqref{eq:orthogonalize} involved in Algorithm \ref{alg:VJD}. From Algorithm \ref{alg:for ineq}, it is clear that $(\w{\lad}_j,\w{\mu}_j,\w{v}_j) = (\l \w{v}_j, A \w{v}_j  \r, \l \w{v}_j, B \w{v}_j\r, \w{v}_j)$, and then the assumption \eqref{eq:ini_assp} implies  
\begin{equation}\label{eq:stop criterion}
\w{\mathcal{L}}\bracket{\tilde{v}_j} = O(\d)\q \text{for all}\ j\,. 
\end{equation}
By viewing $\w{v}_j$ as an approximation to the common eigenvector $v^{(j)}$ and applying Lemma \ref{prop:numeri_sta}, there exist small perturbations $\De A_j, \De B_j \in \sn$ such that $\tilde{v}_j$ is an eigenvector of both $A+\Delta A_j$ and $B+\Delta B_j$ and there holds 
\begin{equation} \label{eq:perturb_order}
   \sqrt{\norm{\Delta A_j}^2 + \norm{\Delta B_j}^2} \le \frac{\max\{\norm{A},\norm{B}\}}{\min\{\norm{A},\norm{B}\}} \sqrt{\w{\mc{L}}(\w{v}_j)} = O(\sqrt{\d})\,.
\end{equation}
Therefore, by \eqref{eq:perturb_order} and Lemma \ref{lemma:perturbation}, we obtain 
\begin{align*}
    \norm{\w{v}_j - v_j}  = O(\sqrt{\d})\,,
\end{align*}
and hence 
\begin{equation}\label{eq:ineq}
    \abs{\l\tilde{v}_i,\tilde{v}_j\r} = O(\sqrt{\d})\q \text{for}\ i \ne j\,.
\end{equation}
Letting $V = [\w{v}_1,\ldots,\w{v}_n]$, it readily follows from \eqref{eq:ineq} that $(V^T V)_{ii} = 1$ and $ (V^T V)_{ij} = O(\sqrt{\d})$, which implies 
\begin{align} \label{eq:pert_svd}
    \norm{V^T V - I} \le \norm{V^T V - I}_{{\rm F}} = O(n\sqrt{\d})\,.
\end{align}
Again by Lemma \ref{lemma:perturbation}, the estimate \eqref{eq:pert_svd} above gives that all the eigenvalues of $V^T V$ are $O(n\sqrt{\d})$ perturbations of $1$, that is, all the diagonal entries of $\Sigma \in {\rm diag}(n)$ in the SVD $V = U_1 \Sigma U_2$ of $V$ are of the form $1 + O(n \sqrt{\d})$. This allows us to conclude 
\begin{align*}
 \norm{U_1 U_2 - V} = \norm{I - \Sigma} = O(n \sqrt{\d})\,.
\end{align*}

We now write $U = U_1 U_2 = [\Bar{v}_1,\cdots,\Bar{v}_n]$ and find
\begin{equation} \label{auxeqq}
    \sum_{j=1}^n \norm{\tilde{v}_j - \Bar{v}_j}^2  = \fnorm{U - V}^2 = \fnorm{I - \Sigma}^2 = O(n^3 \d)\,.
\end{equation}
By the stopping criterion in Algorithm \ref{alg:for ineq}, we directly see $\norm{\grad \mc{L}(\w{v}_j)} \le \epsilon$. Then, from \eqref{auxeqq}, we have
\begin{equation}\notag
    \begin{split}
        & \Big|\sum_{j=1}^n \w{\mathcal{L}}\bracket{\Bar{v}_j} - \sum_{j=1}^n \w{\mathcal{L}}\bracket{\tilde{v}_j} \Big| \le \sum_{j=1}^n  \Big|\w{\mc{L}}\bracket{\Bar{v}_j} - \w{\mc{L}}\bracket{\tilde{v}_j} \Big| \le O(\epsilon)\sum_{j=1}^n  \norm{\bar{v}_j - \w{v}_j} = O(n^2 \epsilon \sqrt{\d})\,,
    \end{split}
\end{equation}
which further gives 
\begin{align*}
 \sum_{j=1}^n \w{\mathcal{L}}(\Bar{v}_j) \le \sum_{j=1}^n \w{\mathcal{L}}(\tilde{v}_j) + O(n^2 \epsilon \sqrt{\d}) = O(n \d) + O(n^2 \epsilon \sqrt{\d})\,.
\end{align*}
The proof is completed by the following simple estimate:
\begin{align*}
     \norm{A - U D_1 U^T}^2 + \norm{B - U D_2 U^T}^2 \le \norm{A - U D_1 U^T}_{{\rm F}}^2 + \norm{B - U D_2 U^T}_{{\rm F}}^2 \le  \sum_{j=1}^n \w{\mathcal{L}}(\Bar{v}_j)\,,
\end{align*}
where the last inequality follows from the definitions of $U \in \on$ and $D_1,D_2 \in \diag{n}$. 
\end{proof}

\section{Numerical experiments}\label{sec:experiments}
In this section, we present extensive numerical experiments to verify the efficiency and robustness of our proposed algorithms in Section \ref{sec:algorithm}. In Section \ref{subsec:compare H}, we test the performance of the approximate Newton method (cf.\,Algorithm \ref{alg:for ineq}) with the exact Riemannian Hessian and various approximate Hessian matrices. Then in Section \ref{subsec:compare}, we compare the Jacobi algorithm and the VJD one for almost commuting matrices in detail, and we also investigate, both numerically and theoretically, the relations between the error cost function and the magnitude of the commutator. In Section \ref{subsec:ICA}, we apply 
our algorithm to the independent component analysis (ICA) problem. \mb{All the experiments presented below are conducted using Python on a personal laptop with 16 GB RAM and 8-core 2.6 MHz CPU. The implementation of the JADE algorithm is based on the Python package \cite{barachant2015pyriemann}. The synthetic almost commuting matrices in Sections \ref{subsec:compare H} and \ref{subsec:compare} are generated by $(A,B) = (A_*, B_*) + (\Delta A, \Delta B)$, where the commuting pair $(A_*, B_*) \in {\rm F}_{\rm com}$ is sampled from the distribution $\mathbb{P}_{{\rm F}_{\rm com}}$ introduced in Section \ref{sec:stability analysis} and $(\Delta A, \Delta B) \in \sn \t \sn$ is the additive Gaussian noise with noise level $\si$, namely, $(\Delta A)_{ij}$ and $(\Delta B)_{ij}$ with $i \le j$ are i.i.d. Gaussian $\mc{N}(0,\si^2)$.}

\subsection{Comparison between different approximate Hessian}\label{subsec:compare H}

\mb{We explore different approximate Hessian matrices within Algorithm \ref{alg:for ineq} and compare their performances for solving \eqref{eq:sub-problem} and \eqref{eq:problem with inequality constraint} in terms of the computing times.} As discussed after Algorithm \ref{alg:for ineq}, these approximate Hessian matrices can be broadly categorized into two families: those with the tangent space $T_v \S^{n-1}$ as their invariant subspaces and those without. For the first family, the search direction $s_k$ in \eqref{eq:quasi Newton equation} can be readily solved by the pseudoinverse: 
\begin{align} \label{eq:sch_1}
    s_k = - H(v_k)^\dag \grad \w{\mc{L}}(v_k)\,,
\end{align}
while for the second family, we consider the associated least-squares problem:
\begin{align} \label{eq:lssquare}
    \min_{s \in T_{v_k} \S^{n-1}} \norm{H(v_k) s + \grad \w{\mc{L}}(v_k)} = \min_{x \in \R^n} \norm{H(v_k)(I - v_k v_k^T)x + \grad \w{\mc{L}}(v_k)}\,.
\end{align}
It is well-known \cite{golub2013matrix} that the above problem \eqref{eq:lssquare} admits a minimizer:
\begin{align} \label{eq:sch_2}
s_k = - \left(I - v_k v_k^T\right)\left(H(v_k)(I - v_k v_k^T)\right)^\dag \grad \w{\mc{L}}(v_k)\,.
\end{align}
\mb{Here we consider the following four approximate Hessian matrices (which have been justified in Lemma \ref{prop:ob2} and the discussion afterward):  
the Euclidean Hessian of $\mc{L}(\Lad,v)$ in $v$: $H_0(v) = \nabla_v^2{\mathcal{L}}(\Lad, v)|_{(\Lad,v) = (\l v, A v \r,\l v ,B v\r,v)}$, the Riemannian Hessian of $\mc{L}$ in $v$ \eqref{eq:rieman_Lv}: $H_1(v) = \hess_v {\mathcal{L}}(\Lad,v)|_{(\Lad,v) = (\l v, A v \r,\l v ,B v\r,v)}$, the projected Euclidean Hessian of $\mc{L}$: $H_2(v) = (I_n - v v^T)H_0(v)(I_n - v v^T)$, the exact Riemannian Hessian of $\w{\mc{L}}(v)$: $H_3(v) = \hess {\w{\mathcal{L}}}(v)$.}

\mb{We test Algorithm \ref{alg:for ineq} with the above four candidates of the approximate Hessian $H(v)$ for solving \eqref{eq:sub-problem} and \eqref{eq:problem with inequality constraint} with $j = 2$ and let $s_k$ be updated accordingly either by \eqref{eq:sch_1} or \eqref{eq:sch_2}, for three 
randomly generated pairs of almost commuting matrices of dimension $n = 1000$ with $\norm{[A,B]}$ of orders $O(10^{-6})$, $O(10^{-4})$, and $O(10^{-2})$. The numerical results are represented in Figure \ref{fig:H}, which clearly shows that all the candidates achieve similar levels of accuracy with nearly the same iteration steps. However, 
compared to the alternating minimization and the Riemannian Newton method (i.e., Algorithm \ref{alg:for ineq} with approximate Hessian $H_1$ and $H_3$, respectively), Algorithm \ref{alg:for ineq} with $H_0$ or $H_2$ significantly reduces the computational cost, with the running time being about five times less than that of the standard Riemannian Newton method.} Note that the convergence for the case of $H_2$ \mb{has been established} in Theorem \ref{thm:glob_conv}. The algorithm with the approximate Hessian $H_0$, which appears to be the most efficient option, relies on the heuristic scheme \eqref{eq:sch_2} and does not align with the Riemannian optimization framework. Some additional techniques may be necessary to ensure its convergence properties. Figure \ref{fig:H} also verifies the robustness of the superiority of the approximate Hessians $H_0$ and $H_2$ with respect to the magnitude of $[A,B]$.

\begin{figure}[!htbp]
	\centering
		\begin{minipage}[t]{1.0\linewidth}
			\centering
			\includegraphics[width=1.0\linewidth]{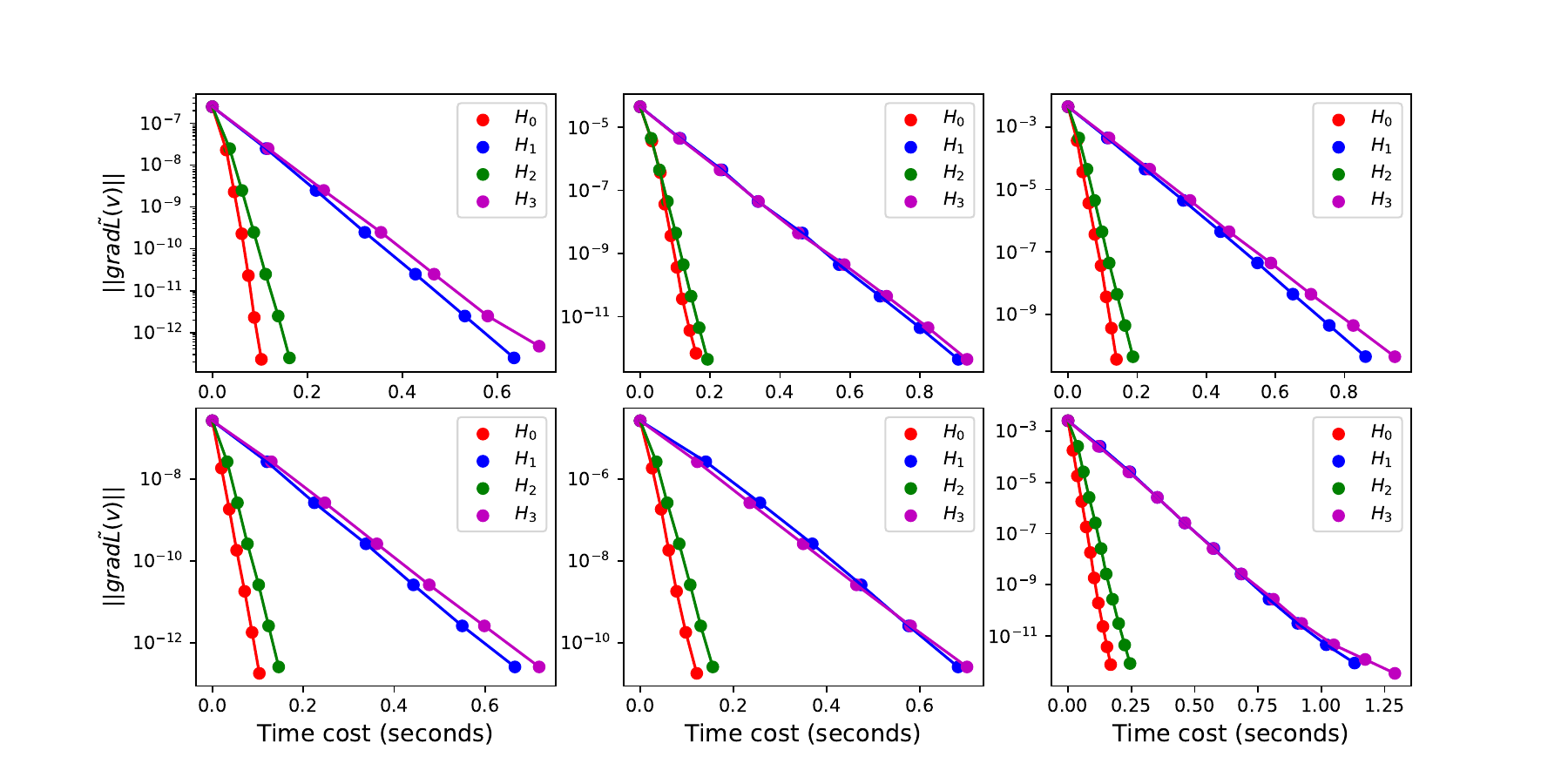}\\
			\vspace{0.02cm}
		\end{minipage}
	\caption{\mb{Convergence histories of Algorithm \ref{alg:for ineq} for \eqref{eq:sub-problem} (i.e., the first triplet) and \eqref{eq:problem with inequality constraint} with $j = 2$ (i.e., the second triplet)  with various approximate Hessian matrices $H$ for randomly generated almost commuting matrices $(A,B)$ of dimension $n= 1000$ with $\norm{[A,B]} = O(10^{-6}), O(10^{-4}), O(10^{-2})$ (left to right). The results for \eqref{eq:sub-problem} and \eqref{eq:problem with inequality constraint} with $j = 2$ are given in the first and second rows, respectively.}}
	\label{fig:H}
	\vspace{-0.2cm}
\end{figure}

\subsection{Comparison with Jacobi algorithm and relation with Lin's theorem}\label{subsec:compare}

\mb{We test Jacobi Algorithm \ref{alg:jacobi} and our VJD Algorithm \ref{alg:VJD} with approximate Hessian $H_0$ given in Section \ref{subsec:compare H} on randomly 
generated almost commuting symmetric matrices of dimensions $n = 50, 100, 500, 1000$ with noise levels $\si = 10^{-2}, 10^{-3},10^{-4},10^{-5}$, respectively, 
to compare their computational efficiencies.} We plot the distributions of the computing times for both algorithms across all the generated samples in Figure \ref{fig:time compare}. It clearly demonstrates that our VJD algorithm is robust against noise and much more efficient than the Jacobi one in the almost commuting regime, especially when the matrix size is large (say, $n=500,1000$). \mb{In addition, noting that Jacobi Algorithm \ref{alg:jacobi} involves $n(n-1)/2$ Givens rotation sweeps and in each sweep, $O(n)$ operations are needed 
to update matrices $A$ and $B$, thus the computational complexity of the Jacobi algorithm is $O(n^3)$. We numerically compare the complexities of the Jacobi algorithm and the proposed VJD one in Figure \ref{fig:time order}, where we plot the computational times of Algorithms \ref{alg:jacobi} and \ref{alg:VJD} as functions of the matrix size on a double logarithmic scale with base 10. It shows that in practice, our VJD algorithm is also of complexity $O(n^3)$ but with a smaller prefactor.}

\begin{figure}[!htbp]
	\centering
		\begin{minipage}[t]{0.7\linewidth}
			\centering
			\includegraphics[width=1.0\linewidth]{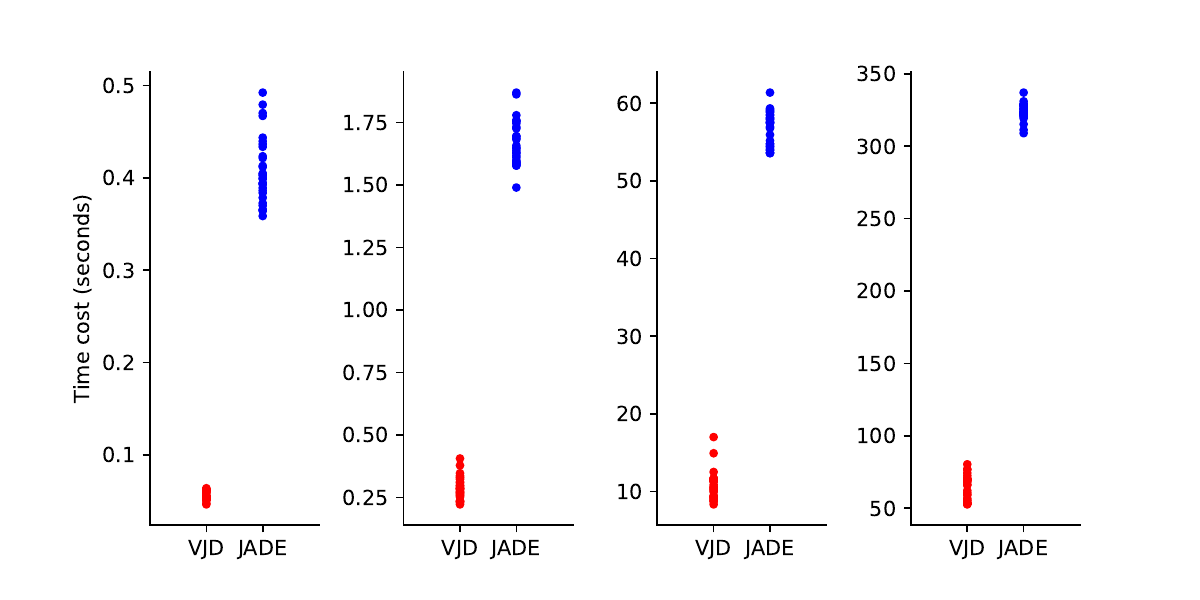}\\
			\vspace{0.02cm}
		\end{minipage}
	\caption{\mb{Statistical distributions of the computational times of Jacobi Algorithm \ref{alg:jacobi} (blue dots) and VJD Algorithm  \ref{alg:VJD}  (red dots) for $30$ randomly 
generated pairs of almost commuting matrices of dimensions $n = 50, 100, 500, 1000$ with noise levels $\si = 10^{-2}, 10^{-3},10^{-4},10^{-5}$ (from left to right), respectively.}}
	\label{fig:time compare}
	\vspace{-0.2cm}
\end{figure}

\begin{figure}[!htbp]
	\centering
		\begin{minipage}[t]{0.7\linewidth}
			\centering
			\includegraphics[width=0.8\linewidth]{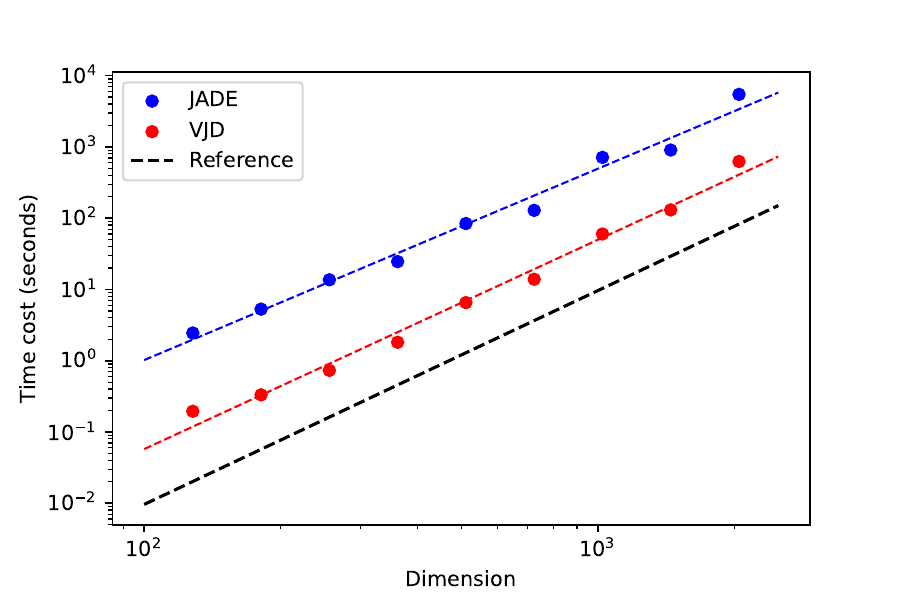}\\
			\vspace{0.02cm}
		\end{minipage}
	\caption{\mb{Averaged computational times of Jacobi Algorithm \ref{alg:jacobi} and VJD Algorithm \ref{alg:VJD} for almost commuting matrices with dimensions ranging from $128$ to $2048$ (log–log scale). For each dimension, $30$ samples are generated with noise level $\si = 0.01$. The blue and red dashed lines are obtained by the least-square fitting of the data points. The results are benchmarked by the black dashed line with a slope of $3$.}} 
	\label{fig:time order}
	\vspace{-0.2cm}
\end{figure}

Recalling that this work was initially motivated
by finding a numerically feasible solution to Lin's theorem, we are interested in whether our VJD algorithm can produce commuting matrices $(A',B')$ satisfying the estimate \eqref{eq:linest} in the almost commuting regime. \mb{For this, we consider the same pairs of almost commuting matrices $(A,B)$ as in Figure \ref{fig:time compare} and compute the numerical errors $\mc{J}(D_1,D_2, U)$ \eqref{eq:problem} of both Jacobi and VJD algorithms, denoted by $\mc{J}_{\rm Jacobi}$ and $\mc{J}_{\rm VJD}$, respectively. We plot $\mc{J}_{\rm VJD}$ as a function of the commutator $\norm{[A,B]}$ in Figure \ref{subfig:objective function} and find that $\mc{J}_{\rm VJD}$ scales as $\norm{[A,B]}^2$, which is independent of the problem size.} Thus, it is clear that when $\norm{[A,B]} \ll 1$, there holds
\begin{align} \label{eqscale}
    \mc{J}_{\rm VJD} \sim \norm{[A,B]}^2 \ll \norm{[A,B]}\,,
\end{align}
which means that $A' = U D_1 U^T$ and $B' = U D_2 U^T$ from Algorithm \ref{alg:VJD} would satisfy the bound \eqref{eq:linest}. We provide a theoretical lower bound of $\mc{J}_{\rm VJD}$ below to justify \eqref{eqscale}, which is generalized from \cite[Theorem 3.1]{glashoff2013matrix}. The proof is given in \ref{proof} for the sake of completeness. 

\begin{proposition}\label{thm:lower bound}
Let $A, B \in \sn$ be symmetric matrices with $\norm{A}, \norm{B} \le 1$. Suppose that matrices $U \in \on$ and $D_1, D_2 \in \diag{n}$ are computed by Algorithm \ref{alg:VJD} with input $(A,B)$. Then it holds that 
\begin{equation}\label{eq:lower bound}
\mathcal{J}(D_1,D_2,U) \ge \frac{1}{8}\norm{[A,B]}^2. 
\end{equation}
\end{proposition}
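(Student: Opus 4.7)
The plan is to reduce the bound to a \emph{linear} estimate on $\|[A,B]\|$ in terms of the errors $\Delta A := A - U\Lambda_1 U^T$ and $\Delta B := B - U\Lambda_2 U^T$. Writing $A' := U\Lambda_1 U^T$ and $B' := U\Lambda_2 U^T$, we have $[A',B']=0$ because $\Lambda_1,\Lambda_2$ are diagonal. By the orthogonal invariance of the operator norm, $\mathcal{J}(\Lambda_1,\Lambda_2,U) = \|\Delta A\|^2 + \|\Delta B\|^2$. The desired conclusion $\mathcal{J} \ge \tfrac{1}{8}\|[A,B]\|^2$ will then follow from the single estimate
\begin{equation}\label{eq:target}
\|[A,B]\| \le 2\bigl(\|\Delta A\| + \|\Delta B\|\bigr),
\end{equation}
together with the elementary inequality $(x+y)^2 \le 2(x^2+y^2)$.

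The first step is to pin down a \emph{norm bound on the approximating commuting pair}, namely $\|A'\|\le 1$ and $\|B'\|\le 1$. This is not automatic for arbitrary $(U,\Lambda_1,\Lambda_2)$, but it holds for the output of Algorithm \ref{alg:VJD}: from \eqref{eq:update of lambda and mu} applied inside the sub-optimization problems \eqref{eq:sub-problem} and \eqref{eq:problem with inequality constraint}, the computed minimizers satisfy $\lambda^{(j)} = \langle v^{(j)}, A v^{(j)}\rangle$ and $\mu^{(j)} = \langle v^{(j)}, B v^{(j)}\rangle$ with $v^{(j)} \in \S^{n-1}$. Since $\|A\|,\|B\|\le 1$, this gives $|\lambda^{(j)}|,|\mu^{(j)}|\le 1$, hence $\|\Lambda_1\|, \|\Lambda_2\|\le 1$, and by orthogonal invariance $\|A'\|=\|\Lambda_1\|\le 1$ and similarly $\|B'\|\le 1$.

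The second step is the algebraic identity
\begin{equation}\label{eq:identity}
[A,B] = [A,B] - [A',B'] = [\Delta A, B'] + [A, \Delta B],
\end{equation}
which is verified by expanding both commutators on the right (the cross terms $\pm AB'$ and $\pm B'A$ cancel, leaving exactly $[A,B]-[A',B']$). Combined with $\|A\|\le 1$ and the just-established $\|B'\|\le 1$, the sub-multiplicative bound $\|[X,Y]\|\le 2\|X\|\|Y\|$ applied termwise in \eqref{eq:identity} yields \eqref{eq:target} directly. Squaring and applying $(x+y)^2\le 2(x^2+y^2)$ produces $\|[A,B]\|^2 \le 8\mathcal{J}$, which is the claim.

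The main (and essentially only) conceptual hurdle is the bound $\|A'\|,\|B'\|\le 1$: if one instead tried to prove the result for an arbitrary triple $(U,\Lambda_1,\Lambda_2)$, the naive bound $\|A'\|\le \|A\|+\|\Delta A\|$ generates an extra quadratic term $\|\Delta A\|\|\Delta B\|$ in \eqref{eq:target} that destroys the constant $1/8$. Thus the proposition genuinely uses the Rayleigh-quotient structure of the diagonal entries produced by the algorithm, and the rest of the argument is a two-line commutator manipulation.
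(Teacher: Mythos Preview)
Your proof is correct and follows essentially the same strategy as the paper: both exploit that the diagonal entries $\lambda^{(j)},\mu^{(j)}$ are Rayleigh quotients of unit vectors (hence bounded by $1$), expand the commutator using $[A',B']=0$, and finish with $(x+y)^2\le 2(x^2+y^2)$. The paper conjugates by the orthogonal matrix and writes the identity as $[D_A+X,Y]+[X,D_B]$, whereas you write it equivalently as $[\Delta A,B']+[A,\Delta B]$ in the original coordinates; your version is slightly more direct in that it avoids the paper's intermediate reduction to $\min_{U\in\on}\mathcal J(U)$ and its identification of $\Lambda_1$ with $\mathrm{diag}(U^TAU)$.
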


\noindent \mb{We also compute the relative errors between Jacobi and VJD algorithms in the same experimental setup by $|\mc{J}_{\rm Jacobi} - \mc{J}_{\rm VJD}|/\mc{J}_{\rm Jacobi}$ with the results shown in Figure \ref{subfig:relative error}. We can see that our VJD algorithm can achieve nearly the same accuracy as the Jacobi method but with much less computational time (cf. Figures \ref{fig:time compare} and \ref{fig:time order}).}

\begin{figure}[!htbp]
	\centering
	\subfigure[Error of VJD]{
		\begin{minipage}[t]{0.49\linewidth}
			\centering
			\includegraphics[width=1.0\linewidth]{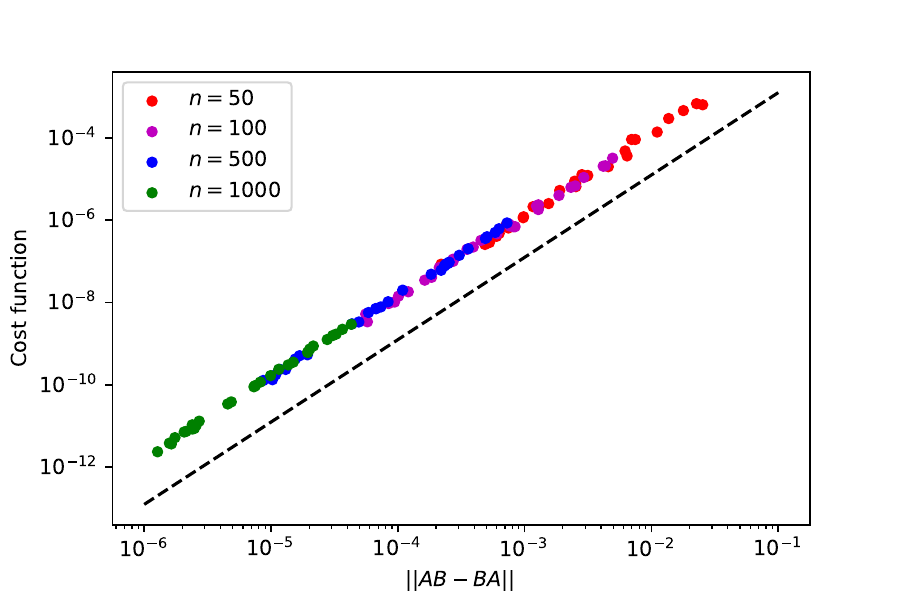}\\
			\label{subfig:objective function}
		\end{minipage}%
	}%
	\subfigure[Relative error]{
		\begin{minipage}[t]{0.49\linewidth}
			\centering
			\includegraphics[width=1.0\linewidth]{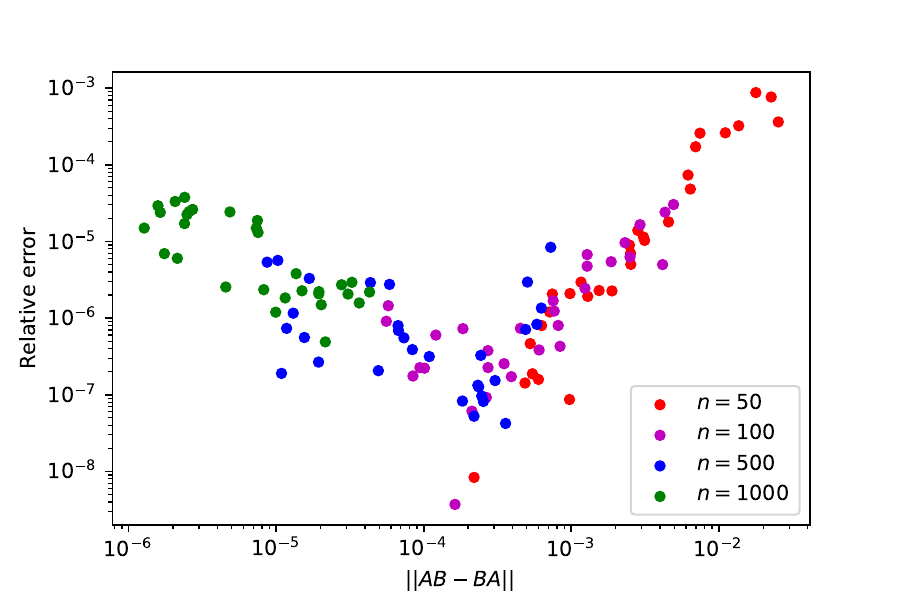}\\
			\label{subfig:relative error}
		\end{minipage}%
	}	
	\caption{\mb{(a) Errors $\mc{J}_{\rm VJD}$ of VJD Algorithm \ref{alg:VJD} for almost commuting matrices of various dimensions used in Figure \ref{fig:time compare} (log–log scale). The black dashed line represents the lower bound $\norm{[A,B]}^2/8$ in \eqref{eq:lower bound}. (b) Relative errors $|\mc{J}_{\rm Jacobi} - \mc{J}_{\rm VJD}|/\mc{J}_{\rm Jacobi}$ between Jacobi and VJD algorithms (log-log scale).}}
	\label{fig:error compare}
	\vspace{-0.2cm}
\end{figure}

\subsection{Application in independent component analysis}\label{subsec:ICA}

Let $s_1, \ldots, s_n \in \R^t$ be independent pure signals with length $t$, \mb{which are assumed to be non-Gaussian.} Suppose that the mixed signals $x_1, \ldots, x_r (r \ge n)$
are the linear combinations of the pure signals, i.e., $X = A S$, where $X$ and $S$ are matrices consisting of rows $x_i$ and $s_i$, respectively. 
The goal of ICA is to reconstruct the source signals $S$ and the mixing matrix $A$ \mb{from the noisy measured data $X$.} Cardoso et al. \cite{cardoso1998multidimensional,Cardoso1993} proposed a standard framework for solving this problem based on the joint diagonalization. Here we follow the setup in \cite{Rutledge2013} and apply the VJD algorithm to the ICA problem. The basic procedures are summarized as follows for the reader's convenience.  First, normalize $X$ such that each row of $X$ has zero mean, and define the whitened matrix $P_W = \sqrt{t}P$ with $P$ computed from the SVD of $X$: $X = Q \Sigma P^T$. 
Second, compute the fourth-order cumulant tensor of the matrix $P_W$ by 
  \begin{equation}
        K(i,j,k,l) = \l p_i\circ p_j\circ p_k\circ p_l \r - \l p_i \circ  p_j \r \l p_k \circ p_l\r - \l p_i \circ p_k \r \l p_j \circ p_l \r - \l p_i \circ p_l \r \l p_j \circ p_k \r\,,
    \end{equation}
where $p_i$ are the columns of $P_W$; 
notation $\circ$ represents the element-wise product (Hadamard product); and $\l \dd \r$ means the expected value (average). Third, project the cumulant tensor $K$ onto a set of $m=n(n+1)/2$ orthogonal eigenmatrices of dimension $n\times n$, where $n$ of them are zero matrices with one diagonal entry being $1$, and the others are zero matrices with two symmetrically off-diagonal entries being $\sqrt{0.5}$. Denote the projected matrices by $M_1,\cdots,M_m$. Fourth, apply the VJD Algorithm~\ref{alg:VJD} to 
jointly diagonalize the $m$ matrices $M_1,\cdots,M_m$. Denote the output orthogonal matrix by $U$. Finally, reconstruct the source signals and the mixing matrix $A$ by 
    \begin{equation}\notag
        \tilde{S} = \sqrt{t} U\Sigma^{-1}Q^TX\,,\q   \tilde{A} = X\tilde{S}^T\bracket{\tilde{S}\tilde{S}^T}^{-1}\,.
    \end{equation}

\mb{To test our VJD algorithm in the ICA application, we consider simulated one-dimensional and two-dimensional source signals in Figures \ref{subfig:source signal} and \ref{subfig:2d source signal}. We assume that the noisy measured signals are given by $X = A S + E$, where $A$ is a random orthogonal matrix and $E$ is a random matrix with $E_{ij}$ being i.i.d. Gaussian $\mc{N}(0,\eta^2)$; see Figures \ref{subfig:measured signal} and \ref{subfig:2d measured signal}. Then, in our experiments, there are $21$ symmetric matrices of size $6 \t 6$ to be jointly diagonalized. We plot in Figures \ref{fig:ICA} and \ref{fig:ICA2} the reconstructed signals by Jacobi Algorithm \ref{alg:jacobi} and VJD Algorithm \ref{alg:VJD} to compare their performance, which clearly shows that both of these two algorithms can help recover the source signals effectively. In addition, we repeat the experiments for the same source signals 
with 100 samples of $A$ and $E$ and record the average time cost and $L^2$-error $\norm{X_{\rm recons} - X_{\rm true}}_{\rm F}$ between the reconstructed signals $X_{\rm recons}$ and the source signals $X_{\rm true}$ in Tables \ref{tab:tableA} and \ref{tab:tableB}. It again demonstrates the advantages of our VJD algorithm over the standard Jacobi one.}


\begin{figure}[!htbp]
	\centering
	\subfigure[Source signals]{
		\begin{minipage}[t]{0.49\linewidth}
			\centering
			\includegraphics[width=1\linewidth]{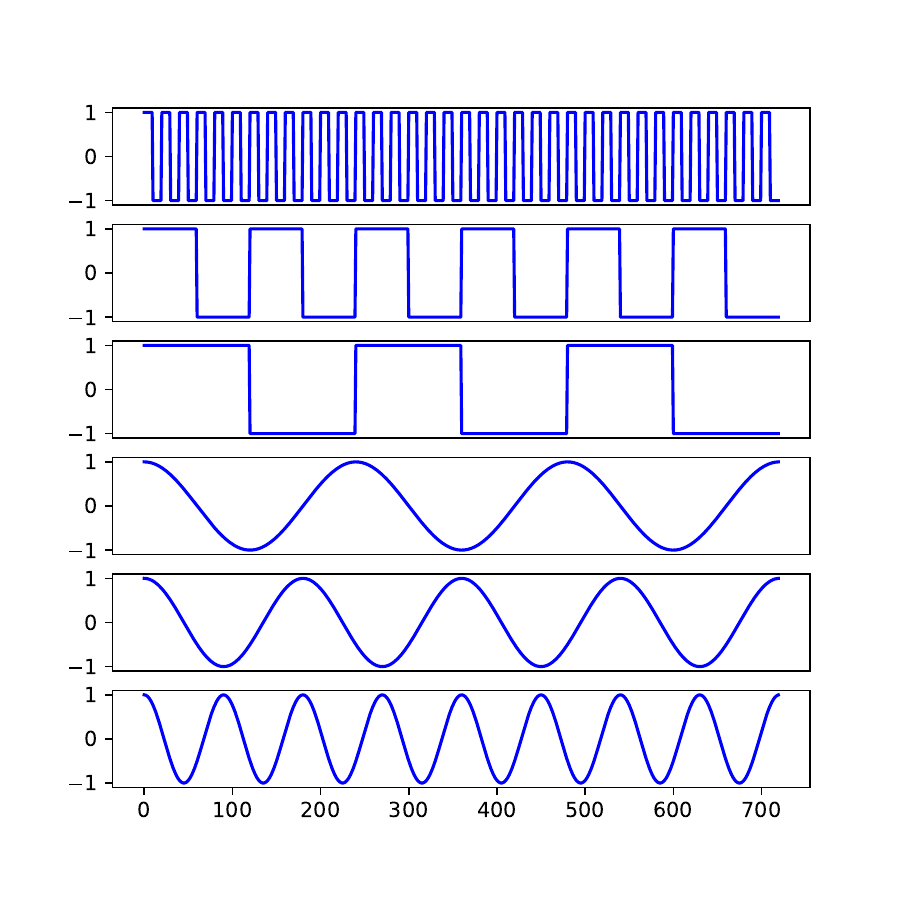}\\
			\label{subfig:source signal}
		\end{minipage}%
	}%
	\subfigure[\mb{Measured signals with noise level $\eta = 0.1$}]{
		\begin{minipage}[t]{0.49\linewidth}
			\centering
			\includegraphics[width=1\linewidth]{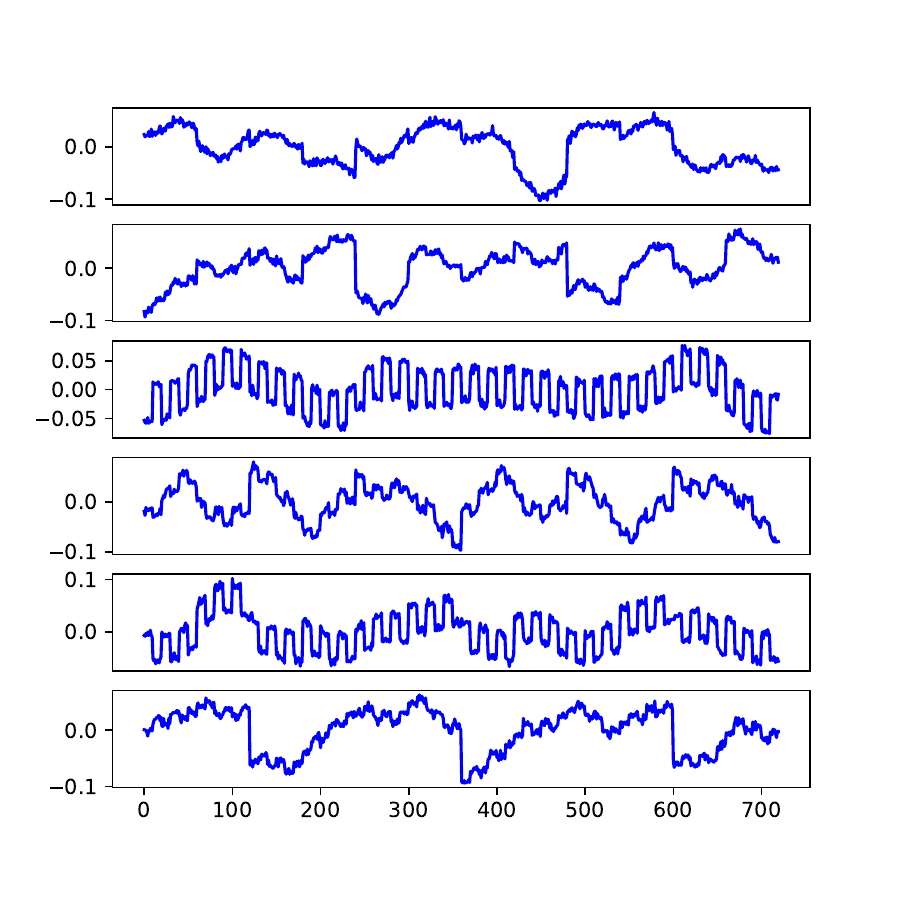}\\
			\label{subfig:measured signal}
		\end{minipage}%
	}%
        \vskip\parindent \vspace{-5 mm}
        \subfigure[Reconstruction by JADE]{
		\begin{minipage}[t]{0.49\linewidth}
			\centering
			\includegraphics[width=1\linewidth]{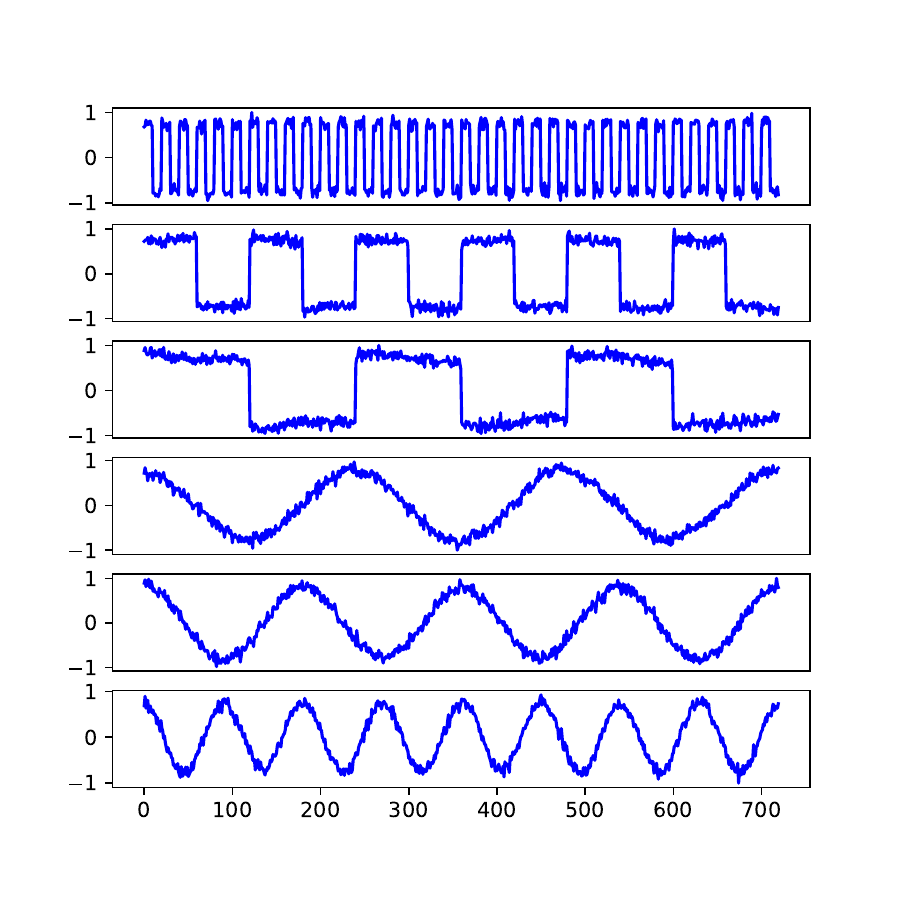}\\
			\label{subfig:reconstruction by JADE}
		\end{minipage}%
	}%
        \subfigure[Reconstruction by VJD]{
		\begin{minipage}[t]{0.49\linewidth}
			\centering
			\includegraphics[width=1\linewidth]{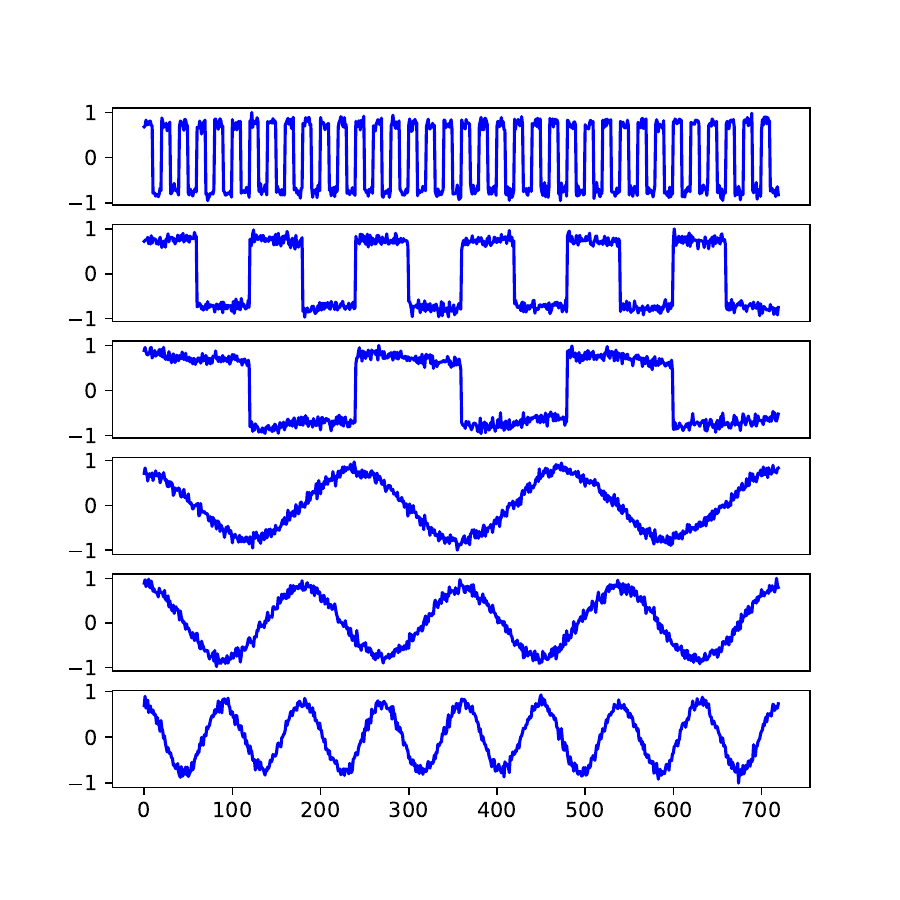}\\
			\label{subfig:reconstruction by VJD}
		\end{minipage}%
	}%
	
	\caption{\mb{ICA for one-dimensional signals. The operator norms of pairwise commutators $\norm{[M_i, M_j]}$ of matrices $\{M_j\}_{j = 1}^m$ to be jointly diagonalized range from $0.041$ to $0.319$.}}
	\label{fig:ICA}
	\vspace{-0.2cm}
\end{figure}



\begin{figure}[!htbp]
	\centering
	\subfigure[Source signals]{
		\begin{minipage}[t]{0.49\linewidth}
			\centering
			\includegraphics[width=1\linewidth]{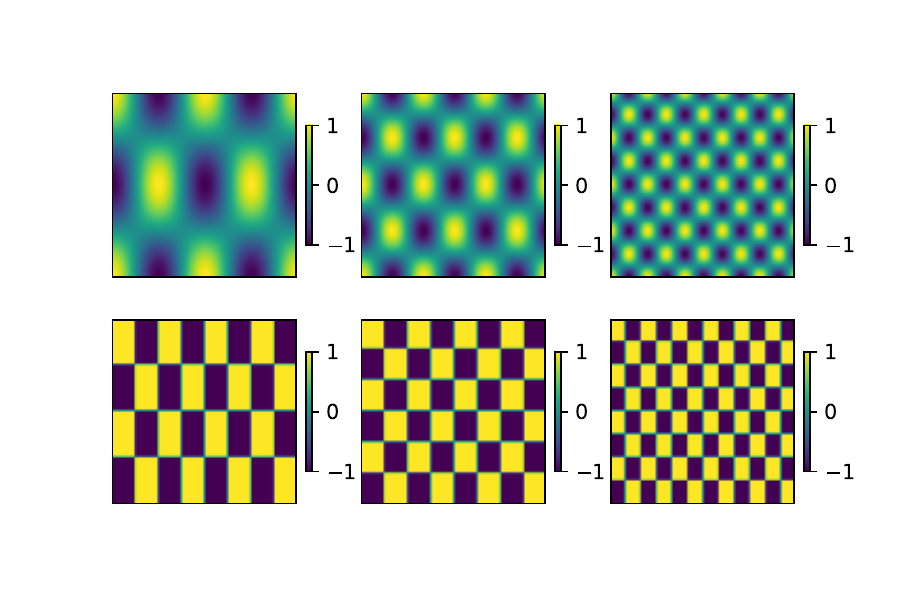}\\
			\label{subfig:2d source signal}
		\end{minipage}%
	}%
	\subfigure[\mb{Measured signals with noise level $\eta = 0.01$}]{
		\begin{minipage}[t]{0.49\linewidth}
			\centering
			\includegraphics[width=1\linewidth]{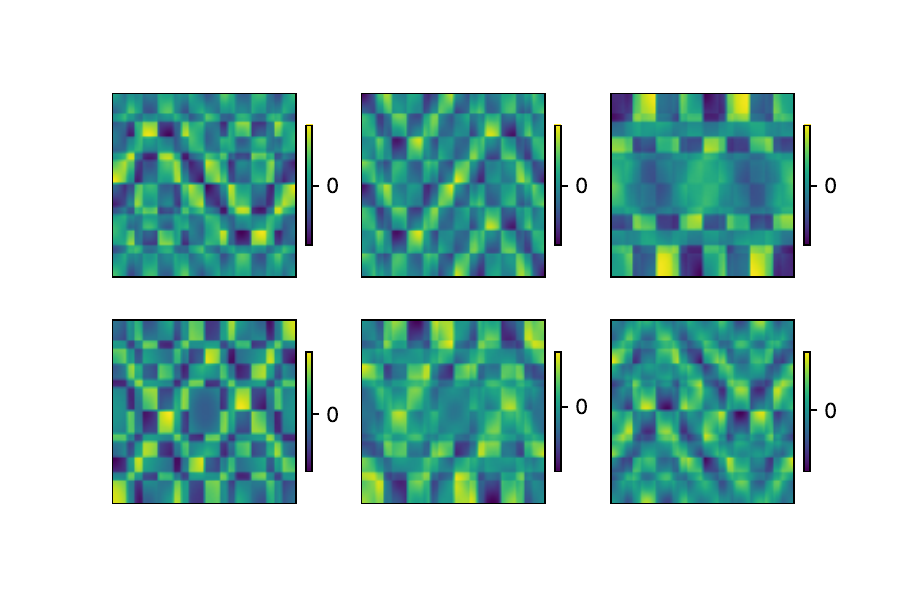}\\
			\label{subfig:2d measured signal}
		\end{minipage}%
	}%
   \vskip\parindent \vspace{-5 mm}
        \subfigure[Reconstruction by JADE]{
		\begin{minipage}[t]{0.49\linewidth}
			\centering
			\includegraphics[width=1\linewidth]{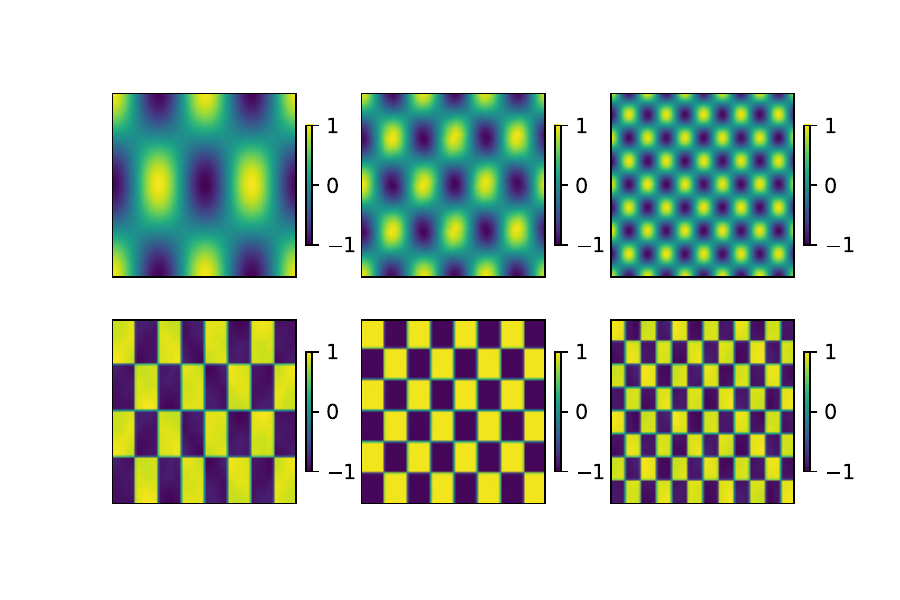}\\
			\label{subfig:2d reconstruction by JADE}
		\end{minipage}%
	}%
        \subfigure[Reconstruction by VJD]{
		\begin{minipage}[t]{0.49\linewidth}
			\centering
			\includegraphics[width=1\linewidth]{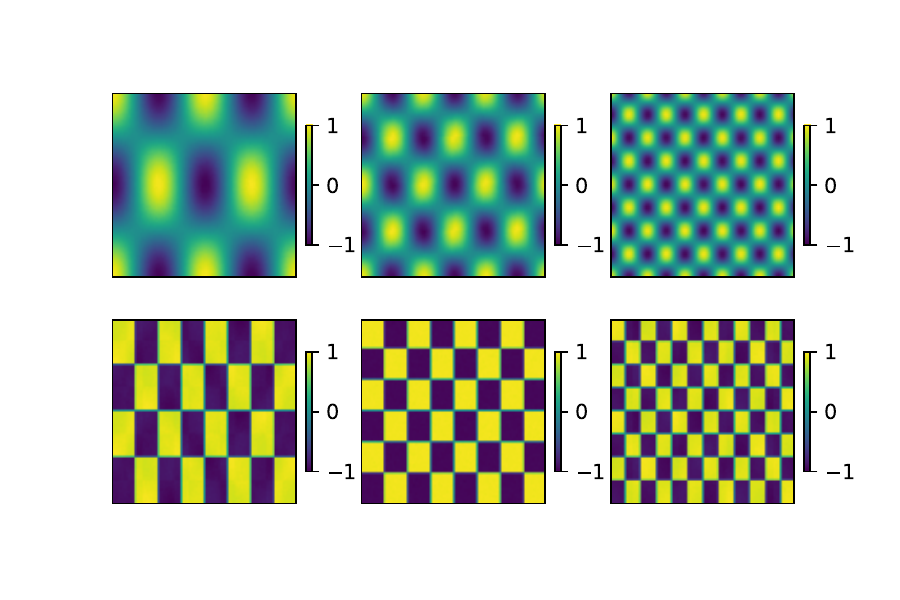}\\
			\label{subfig:2d reconstruction by VJD}
		\end{minipage}%
	}%
	
	\caption{\mb{ICA for two-dimensional signals. The operator norms of pairwise commutators $\norm{[M_i, M_j]}$ of matrices $\{M_j\}_{j = 1}^m$ to be jointly diagonalized range from $0.112$ to $0.669$.}}
	\label{fig:ICA2}
	\vspace{-0.2cm}
\end{figure}

\begin{table}[!htbp]
  \begin{minipage}{0.49\textwidth} 
    \centering
    \begin{subtable}
      \centering
    \begin{tabular}{|c|c|c|} 
\hline
 & Avg. time (ms) & Avg. $L^2$-error \\ 
\hline
JADE & $5.526$ & $0.227$ \\ 
\hline
VJD & $2.143$ & $0.228$ \\ 
\hline
\end{tabular}
      \caption{\mb{Computing time and $L^2$-error for ICA in Figure \ref{fig:ICA}}}
      \label{tab:tableA}
    \end{subtable}
  \end{minipage}%
  \begin{minipage}{0.49\textwidth} 
    \centering
    \begin{subtable}
      \centering
   \begin{tabular}{|c|c|c|} 
\hline
 & Avg. time (ms) & Avg. $L^2$-error \\ 
\hline
JADE & $6.935$ & $5.602 \times 10^{-2}$ \\ 
\hline
VJD & $2.493$ & $4.813 \times 10^{-2}$ \\ 
\hline
\end{tabular}
      \caption{\mb{Computing time and $L^2$-error for ICA in Figure \ref{fig:ICA2}}}
      \label{tab:tableB}
    \end{subtable}
  \end{minipage}
  \label{tab:bothtables}
\end{table}

\section{Conclusion}
The approximate joint diagonalization of almost commuting matrices arises in many applications and closely relates to the celebrated Huaxin Lin's theorem.  In this work, we have designed a vector-wise algorithm framework for jointly diagonalizing a given pair of almost commuting matrices $(A,B)$, which relies on solving the sub-optimization problems \eqref{eq:sub-problem} and \eqref{eq:problem with inequality constraint}. To motivate our VJD algorithm (cf.\,Algorithm \ref{alg:VJD}), we have first analyzed the stability of the local minimizers of $\mc{L}(\Lad,v)$ in \eqref{eq:sub-problem}, and then, with the help of 
the random matrix theory, we have shown that for almost commuting random matrices, with high probability the functional $\mc{L}$ has $n$ local minimizers with almost orthogonal minimizing vectors (cf.\,Theorem \ref{thm:generic_almost}). \mb{To efficiently solve the sub-optimization problems, we have proposed a projected 
Riemannian approximate Newton method (cf.\,Algorithm \ref{alg:for ineq}) for \eqref{eq:sub-problem} and \eqref{eq:problem with inequality constraint}.}  Moreover, we have proved in Theorem \ref{thm:glob_conv} that in the almost commuting regime, Algorithm \ref{alg:for ineq} for \eqref{eq:sub-problem} exhibits linear convergence with a rate of $O(| [A, B] |^{1/2})$, 
transitioning to quadratic convergence when $A$ and $B$ commute. \mb{However, the convergence of the projected Newton method (i.e., Algorithm \ref{alg:for ineq} for \eqref{eq:problem with inequality constraint}) is still open and needs further investigation.} We have also
proved that our VJD algorithm is stable with respect to iteration errors and noise. 

Numerical results suggest that our vector-wise framework is more efficient than the popular Jacobi-type algorithm, especially for large-scale matrices, and can be applied to the ICA problem for signal reconstruction. We have numerically and theoretically examined the relations between the cost functional $\mc{J}(D_1,D_2,U)$ in \eqref{eq:problem} and the commutator $[A,B]$ and find $\mc{J} \sim \norm{[A,B]}^2$, which indicates that when $\norm{[A,B]}$ is small enough, the commuting matrices $(A', B') = (U D_1 U^T, U D_2 U^T)$ constructed from Algorithm \ref{alg:VJD} satisfy the estimate \eqref{eq:linest} and hence give a desired numerical solution for Lin's theorem. Finally, we would like to point out that though our approach works quite well for almost commuting matrices, it would be still important and useful to find an algorithm for Lin’s Theorem beyond the almost commuting regime.








\section*{Declaration of competing interest}
The authors declare that they have no known competing financial interests or personal relationships that could have appeared to influence the work reported in this work.

\section*{Acknowledgement} 
The work of BL and JL is supported in part by the US National Science Foundation under award CCF-1910571 and by the US Department of Energy via grant DE-SC0019449. JL would like to thank Terry Loring for helpful discussions regarding the problem of almost commuting matrices.

\appendix
\section{Proof of Proposition \ref{thm:lower bound}} \label{proof}

\begin{proof}[Proof of Proposition \ref{thm:lower bound}]
We recall the construction of the diagonal matrices $D_1, D_2$ in Algorithm \ref{alg:VJD}:
\begin{align*}
    D_1 = {
    \rm diag}(\l v_1, A v_1\r, \ldots, \l v_n, A v_n\r)\,,\q  D_2 = {
    \rm diag}(\l v_1, B v_1\r, \ldots, \l v_n, B v_n\r)\,,
\end{align*}
where $v_i$ is the $i$-th column of the orthogonal matrix $U$. Then it is easy to see that
\begin{align*}
    J(D_1, D_2, U) = \norm{U^T A U - \diag{U^T A U }}^2 + \norm{U^T B U - \diag{U^T B U }}^2 = : \mc{J}(U)\,.
\end{align*}
Here and in what follows, $\diag{M}$ means the diagonal part of a $n \t n$ matrix $M$.
Hence, to bound $J(D_1, D_2, U)$ from below, it suffices to consider the lower bound of 
\begin{align} \label{auxeqop}
    \min \left\{\mc{J}(U)\,; \ U \in \on \right\}\,.
\end{align}
Let $V$ be a minimizer to the optimization problem \ref{auxeqop}, and we write 
\begin{equation}\label{decompose}
V^T A V = D_A + X\,,\q  V^T B V = D_B + Y\,.
\end{equation}
where $D_A: = {\rm diag}(V^T A V)$ and $D_B = {\rm diag}(V^T B V)$. Noting that $\norm{A} = \max_{v \in \S^{n-1}} |\l x, A x \r|$ holds for $A \in \sn$, we readily have, by definition,
\begin{equation} \label{eq:auxnorm}
\norm{D_A} \le \norm{A} \le 1\,,\q  \norm{D_B} \le \norm{B} \le 1\,.
\end{equation}
The decomposition \eqref{decompose} can be rewritten as $A=VD_AV^T+VXV^T$ and $A=VD_BV^T+VYV^T$, which implies 
\begin{equation}\notag
\begin{split}
& AB = VD_AD_BV^T+VD_AYV^T+VXD_BV^T+VXYV^T, \\
& BA = VD_BD_AV^T+VD_BXV^T+VYD_AV^T+VYXV^T.
\end{split}
\end{equation}\notag
Then, a direct computation gives 
\begin{equation}
\begin{split}
[A,B]&=AB-BA=V\bracket{[D_A,D_B]+[D_A,Y]+[X,D_B]+[X,Y]}V^T \\
& =V\bracket{[D_A,Y]+[X,D_B]+[X,Y]}V^T \\
& =V\bracket{[D_A+X,Y]+[X,D_B]}V^T\,.
\end{split}
\end{equation}
By the triangle inequality and the unitary invariance of operator norm, there holds 
\begin{equation}\notag
\norm{[A,B]}\le\norm{[D_A+X,Y]}+\norm{[X,D_B]}\,,
\end{equation}
which further yields, by a simple estimate using \eqref{eq:auxnorm},
\begin{align*}
\norm{[A,B]}&\le 2\norm{D_A+X}\norm{Y} + 2\norm{X}\norm{D_B}  \le 2\bracket{\norm{X}+\norm{Y}} \le 2 \sqrt{2 \mc{J}(V)}\,.
\end{align*}
The proof is complete. 


\end{proof}

\end{document}